\documentclass[draft]{amsart}
\usepackage{amsfonts,amssymb,latexsym,amsmath, amsxtra,url, mathrsfs}
\usepackage[utf8]{inputenc}

\usepackage{graphicx}
\graphicspath{ {./Figures/} }

\usepackage{rotating}

\usepackage{tikz}
\usepackage{caption}

\usepackage{hyperref}
\hypersetup{colorlinks=true,linkcolor=blue }

\pagestyle{myheadings}
\textheight=8.5 true in \textwidth=6.5 true in \hoffset=-0.8true in

\theoremstyle{plain}
\newtheorem{theorem}{Theorem}[section]
\newtheorem*{theorem*}{Theorem}
\newtheorem{corollary}[theorem]{Corollary}

\newtheorem{lemma}[theorem]{Lemma}
\newtheorem{proposition}[theorem]{Proposition}
\newtheorem*{conjecture*}{Conjecture}

\theoremstyle{definition}
\newtheorem{definition}[theorem]{Definition}
\newtheorem{remark}[theorem]{Remark}
\theoremstyle{remark}

\numberwithin{equation}{section}

\newcommand{\SCP}{\mathrm{SCP}}

\title{Weighted Cylindric Partitions}

\author{Walter Bridges }
\address{[W.B.] University of Cologne, Department of Mathematics and Computer Science, Weyertal 86-90, 50931 Cologne, Germany}
\email{wbridges@uni-koeln.de}

\author[Ali K. Uncu]{Ali K. Uncu}
\address{[A.K.U.] Johann Radon Institute for Computational and Applied Mathematics, Austrian Academy of Science, Altenbergerstraße 69, A-4040 Linz, Austria}
\email{akuncu@ricam.oeaw.ac.at}
\address{[A.K.U.] University of Bath, Faculty of Science, Department of Computer Science, Bath, BA2 7AY, UK}
\email{aku21@bath.ac.uk}

\thanks{Research of the first author is partially supported by the DFG (Projektnummer 281071066 TRR 191). Research of the second author is partly supported by EPSRC grant number EP/T015713/1 and partly by FWF grant P-34501N}

\keywords{Cylindric partitions, skew double shifted plane partitions, partition diamonds, partition identities, weighted partition identities}
  
\subjclass[2010]{Primary 05A15; Secondary 05A17, 11B65, 11P81, 11P84}


\date{\today}

\begin{document}

\maketitle

\begin{abstract}
    Recently Corteel and Welsh outlined a technique for finding new sum-product identities by using functional relations between generating functions for cylindric partitions and a theorem of Borodin. Here, we extend this framework to include very general product-sides coming from work of Han and Xiong. In doing so, we are led to consider structures such as weighted cylindric partitions, symmetric cylindric partitions and weighted skew double shifted plane partitions. We prove some new identities and obtain new proofs of known identities, including the G\"{o}llnitz--Gordon and Little G\"{o}llnitz identities as well as some beautiful Schmidt-type identities of Andrews and Paule.
\end{abstract}

\section{Introduction}

Sum-product identities lie at the heart of the theory of integer partitions and $q$-series.  For example, the two Rogers--Ramanujan identities state that for $\varepsilon \in \{0,1\}$, we have
\begin{equation}\label{E:RR}
\sum_{n \geq 0} \frac{q^{n^2+\varepsilon n}}{(q;q)_n} = \frac{1}{(q^{1+\varepsilon},q^{4-\varepsilon};q^5)_{\infty}},
\end{equation}
where throughout the article we use the $q$-Pochhammer notation, $(a;q)_n:= \prod_{j=1}^n(1-aq^j)$ for $n \geq 1$ with $(a;q)_0:=1 $ and $(q;q)_n^{-1}=0$ for $n \leq -1$.  The Rogers--Ramanujan identities lie deep enough within mathematics to have quite a variety of proofs---combinatorial \cite{GM}, so-called ``motivated proofs'' (via recurrences) \cite{AB}, and via affine Lie algebras \cite{LW}, to name just a few.  We refer the reader to Sills' recent book \cite{Sills} for a thorough history.  The Rogers--Ramanujan identities can be interpreted in terms of integer partitions, and such identities in which a $q$-hypergeometric series equals a product as simple as the right-hand side of \eqref{E:RR} are rare.  Thus, in addition to discovering new identities, it is worthwhile to provide new connections to combinatorial structures beyond integer partitions.

Corteel and Welsh \cite{CW} recently showed how cylindric partitions can be used in the discovery of new sum-product identities.  (We give precise definitions of these objects and some generalizations in Section 2.)  Cylindric partition generating functions for any profile were shown by Borodin \cite{B} to equal infinite products, and ``sum-sides'' were then found by Corteel and Welsh to satisfy $q$-difference equations, which they showed occur naturally for cylindric partitions.  This led to alternative proofs of all four of Andrews--Schilling--Warnnar's $A_2$~Rogers--Ramanujan identities \cite{ACW}, as well as a proof of a new fifth identity which was conjectured in \cite{FW}. One such identity Corteel--Welsh proved in \cite[Theorem 1.1]{CW} is as follows: 

\begin{equation}\label{E:CorteelWelshwidth7rank3}
    \sum_{n_1, n_2 \geq 0} \frac{q^{n_1^2+n_2^2-n_1n_2+n_1+n_2}}{(q;q)_{n_1}}\begin{bmatrix} 2n_1 \\ n_2 \end{bmatrix}_q= \frac{1}{(q^2,q^3,q^3,q^4,q^4,q^5;q^7)_{\infty}}.
\end{equation}
Here the Gaussian polynomials are defined as \[\begin{bmatrix} n \\ m \end{bmatrix}_q=\left\{\begin{array}{cl}\displaystyle\frac{(q;q)_n}{(q;q)_m(q;q)_{n-m}}, &\text{if }n\geq m\geq 0, \\[-1.5ex]\\ 0, & \text{otherwise.}\end{array}\right.\] Recent work of Corteel, Dousse and the second author \cite{CDU}, as well as Warnaar \cite{W}, uses Corteel--Welsh's machinery to prove and conjecture many other sum-product identities.

In the present article, we extend the search for sum-product identities to certain generalizations of cylindric partitions, recently proved to also have infinite product generating functions by Han and Xiong \cite{HX}.  Essentially, one can write {\it any} product $(q^{b_1},\dots, q^{b_{r}};q^{b_r})^{-1}_{\infty}$ as a generating function for some weighted cylindric partitions (see Remark \ref{R:differentproducts}).  By then extending Corteel and Welsh's machinery to this more general setting, we were able to prove the following identities.  

\begin{theorem}\label{T:scpwidth2}  For $z \in \mathbb{C}$ and $|q|<1$, we have
\[\sum_{n\geq 0}(-1)^{n} q^{4n^2} \frac{(q^2,-q^4;q^4)_{n}}{(q^4;q^4)_{2n}} \left(1 - \frac{q^{4n+1}z}{1+q^{4n+2}} \right) z^{2n} = (zq, -zq^3;q^4)_\infty.\]
\end{theorem}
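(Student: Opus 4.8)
The plan is to show that the two sides, regarded as entire functions of $z$, satisfy the same first-order $q$-difference equation and agree at $z=0$, and then to conclude by uniqueness. Write $G(z):=(zq,-zq^3;q^4)_\infty$ for the right-hand side. Stripping off the leading factor of each Pochhammer symbol gives $G(z)=(1-zq)(1+zq^3)(zq^5,-zq^7;q^4)_\infty=(1-zq)(1+zq^3)\,G(q^4z)$, and $G(0)=1$. Since $(1-zq)(1+zq^3)=1+(q^3-q)z-q^4z^2$ and $1-q^{4N}\neq0$ for $N\geq1$, this relation together with the value at $0$ determines the Taylor coefficients of any entire solution recursively; hence $G$ is the unique entire function $H$ with $H(0)=1$ and $H(z)=(1-zq)(1+zq^3)H(q^4z)$. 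It therefore suffices to prove that the left-hand side, call it $F(z)$, has $F(0)=1$ and satisfies this same equation.

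The first step is to put $F$ into a more tractable ``balanced'' form. Using $(q^2;q^4)_n(-q^2;q^4)_n=(q^4;q^8)_n$, $(q^4;q^4)_n(-q^4;q^4)_n=(q^8;q^8)_n$ and $(q^4;q^4)_{2n}=(q^4;q^8)_n(q^8;q^8)_n$, the coefficient simplifies to $(-1)^nq^{4n^2}(q^2,-q^4;q^4)_n/(q^4;q^4)_{2n}=(-1)^nq^{4n^2}/\big((q^4;q^4)_n(-q^2;q^4)_n\big)$; and since $(-q^2;q^4)_n(1+q^{4n+2})=(-q^2;q^4)_{n+1}$, the rational factor $1-\tfrac{q^{4n+1}z}{1+q^{4n+2}}$ absorbs into the next Pochhammer. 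Writing $4n^2=(2n)^2$ and $4n^2+4n+1=(2n+1)^2$, this recasts the sum as
\[
F(z)=\sum_{n\geq0}\frac{(-1)^{n}q^{(2n)^2}z^{2n}}{(q^4;q^4)_{n}(-q^2;q^4)_{n}}+\sum_{n\geq0}\frac{(-1)^{n+1}q^{(2n+1)^2}z^{2n+1}}{(q^4;q^4)_{n}(-q^2;q^4)_{n+1}},
\]
i.e. $F(z)=\sum_{N\geq0}(-1)^{\lceil N/2\rceil}q^{N^2}z^{N}/\big((q^4;q^4)_{\lfloor N/2\rfloor}(-q^2;q^4)_{\lceil N/2\rceil}\big)$. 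In particular $F(0)=1$, and $F$ is entire.

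With $F$ in this form, checking $F(z)=(1+(q^3-q)z-q^4z^2)F(q^4z)$ reduces to comparing coefficients of $z^N$: the right-hand side contributes the $N$th, $(N-1)$th and $(N-2)$th coefficients of $F(q^4z)$ with weights $1$, $q^3-q$, $-q^4$. Substituting the explicit coefficients and clearing denominators via $(q^4;q^4)_n=(1-q^{4n})(q^4;q^4)_{n-1}$ and $(-q^2;q^4)_n=(1+q^{4n-2})(-q^2;q^4)_{n-1}$, the case $N=2n$ collapses to the polynomial identity $q^{8n}+(1-q^{4n})(1+q^{4n-2})+(q^{4n}-q^{4n-2})(1-q^{4n})=1$ and the case $N=2n+1$ to $q^{8n+4}+(q^{4n}-q^{4n+2})(1+q^{4n+2})+(1-q^{4n})(1+q^{4n+2})=1$, both of which expand immediately. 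Thus $F$ satisfies the same $q$-difference equation and initial condition as $G$, so $F=G$, which is the assertion.

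The only non-mechanical step is the coefficient simplification in the second paragraph: one must spot that $(q^2,-q^4;q^4)_n/(q^4;q^4)_{2n}$ and the rational correction factor conspire to produce the balanced series, since this is precisely what makes the $q$-difference equation first order and the uniqueness argument clean. A more structural alternative, which avoids guessing the reduction, is to recognize $F(z)$ as the generating function for the weighted symmetric cylindric partitions / skew double shifted plane partitions ($\DSPP$) of the smallest relevant profile, obtain the $q$-difference equation from the paper's extension of the Corteel--Welsh transfer recursion, and read off the infinite product from Han--Xiong's theorem; the elementary computation above then serves as a cross-check.
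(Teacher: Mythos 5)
Your proof is correct. It takes a genuinely different route from the paper's. The paper obtains the identity from its cylindric-partition machinery: it applies the weighted DSPP recurrence of Proposition \ref{P:DSPPrecurrencegeneral} (in the normalized form \eqref{E:adjustedDSPPrecurrencegeneral}) with weight $(1,2,1)$ to get the coupled system \eqref{E:H11}--\eqref{E:H-11} for $H_\delta(z)=(zq;q)_\infty\,\mathrm{SCP}_{(-\mathrm{rev}(\delta),\delta)}(z;q)$, uncouples it to obtain both the first-order relation \eqref{E:uncoupH-11} (whence the products of Lemma \ref{L:width2zqSCPproducts}) and the second-order coefficient recurrences \eqref{E:uncoup_h11_rec}--\eqref{E:uncoup_h-11_rec}, then guesses and verifies the closed forms \eqref{E:h11_formula}--\eqref{E:h-11_formula}, whose even/odd split is exactly the stated sum. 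You instead verify directly that the sum side satisfies $F(z)=(1-zq)(1+zq^3)F(q^4z)$ with $F(0)=1$ and conclude by uniqueness of power-series solutions of this first-order equation, using $1-q^{4N}\neq 0$ for $|q|<1$; no cylindric partitions, coupled systems, or Han--Xiong products are needed. In substance your verification coincides with the paper's: your balanced coefficient $(-1)^{\lceil N/2\rceil}q^{N^2}/\bigl((q^4;q^4)_{\lfloor N/2\rfloor}(-q^2;q^4)_{\lceil N/2\rceil}\bigr)$ is exactly $h_{(1,-1)}(N)$ after the same Pochhammer manipulations, and your three-term recursion $a_N(1-q^{4N})=(q^3-q)q^{4N-4}a_{N-1}-q^{4N-4}a_{N-2}$ is precisely \eqref{E:uncoup_h1-1_rec}; what differs is that you derive the recursion from the evident functional equation of the product rather than from uncoupling the combinatorial system, and you package the conclusion as a uniqueness argument rather than as a resummation of \eqref{E:H_sum_product}. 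I checked your intermediate claims---the Pochhammer reductions, the two polynomial identities for $N$ even and odd, and the base cases---and they are all correct. Your route is shorter and self-contained (the paper itself notes that such direct proofs exist, e.g.\ via the $q$-binomial theorem and \cite{Paule}); the paper's longer route buys the combinatorial origin of the sum as a generating function for width-$4$ symmetric cylindric partitions, which is the methodological point of the article, and your closing remark correctly identifies that alternative.
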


We offer a combinatorial proof of a specialization of Theorem \ref{T:scpwidth2} in Section \ref{S:proofs}.  Next, we have the following family of four identities.

\begin{theorem}\label{T:scpwidth3}
For $|q|<1$, we have  
\begin{align}
    &\sum_{n,m \geq 0}  (-1)^mq^{3\binom{n+1}{2}-3m(m+1)}\frac{(-q,-q^5;q^6)_m}{(q^6;q^6)_m(q^3;q^3)_{n-2m}}=\frac{(q^4,q^8;q^{12})_{\infty}}{(q^6;q^{12})_{\infty}}, \label{E:scpCaseC} \\
    &\sum_{n,m \geq 0}  (-1)^{m+1} q^{3{n-1\choose 2}+2n -3m(m+1)-1} \frac{(-q,-q^5;q^6)_m}{(q^6;q^6)_m(q^3;q^3)_{n-2m}} (1-q^{3n+1}+q^{3n-6m})=\frac{(q;q^6)_{\infty}(q^{10};q^{12})_{\infty}}{(q^5;q^{6})_{\infty}}, \label{E:scpCaseB} \\
    &\sum_{n,m \geq 0} (-1)^{m+1} q^{3{n+1\choose 2} -3m(m+1)-1} \frac{(-q,-q^5;q^6)_m}{(q^6;q^6)_m(q^3;q^3)_{n-2m}} (1-q^{3n+1}+q^{3n-6m})=\frac{(q^2,q^{10};q^{12})_{\infty}}{(q^6;q^{12})_{\infty}}, \label{E:scpCaseA} \\
    \label{E:scpCaseG}
    &\sum_{n,m \geq 0}  (-1)^{m+1} q^{3{n+1\choose 2}-2n-3m(m+1)-1} \frac{(-q,-q^5;q^6)_m}{(q^6;q^6)_m(q^3;q^3)_{n-2m}}\\\nonumber &\hspace{4cm}\times(1 + q^{3n-12m-3} (1 + q^{1 + 6 m}) (q^{3 n} - 
    q^{6 m} (1 + q^3))=\frac{(q^2,q^5,q^{11};q^{12})_{\infty}}{(q;q^6)_{\infty}}. 
\end{align}
\end{theorem}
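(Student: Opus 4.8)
The plan is to follow the Corteel--Welsh strategy adapted to the weighted cylindric partition setting developed in the earlier sections. For each of the four identities, the product side should be realized (via Borodin's theorem in the form proved by Han and Xiong \cite{HX}, or its weighted refinement) as the generating function $F_c(z,q)$ for weighted cylindric partitions of a fixed rank/level — here the relevant modulus is $6$ and the underlying profile has width $3$ — with the four cases corresponding to the four admissible profiles $c$. Concretely, I would first identify the level and profiles so that Borodin's product formula yields exactly $(q^4,q^8;q^{12})_\infty/(q^6;q^{12})_\infty$ and its three companions on the right-hand sides, tracking the extra weight (the $(-q,-q^5;q^6)$-type factors signal a weighting that inserts signs/extra parts, presumably the symmetric or ``weighted'' variant introduced in Section 2).

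Next I would set up the system of $q$-difference equations that the $F_c(z,q)$ satisfy. As in \cite{CW}, removing the largest part (or the outermost diagonal) of a weighted cylindric partition gives a linear recursion expressing $F_c(z,q)$ in terms of the $F_{c'}(zq,q)$ over profiles $c'$ obtained by the allowed local moves; the weighting modifies the coefficients but keeps the system finite and triangular enough to solve. I would then guess that the sum sides in \eqref{E:scpCaseC}--\eqref{E:scpCaseG} are precisely these $F_c(1,q)$, and verify this by checking that the proposed $q$-series satisfy the same $q$-difference system with the same initial conditions. This reduces to a finite verification: substitute the explicit double sums into the recursion, shift indices $n \mapsto n-1$, $m \mapsto m-1$ as needed, and confirm the polynomial identities in $q$ among the summand ratios $(-q,-q^5;q^6)_m/((q^6;q^6)_m (q^3;q^3)_{n-2m})$. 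The somewhat elaborate correction factors $(1-q^{3n+1}+q^{3n-6m})$ and the degree-four factor in \eqref{E:scpCaseG} are exactly what one expects to emerge from solving the coupled recursion for the non-principal profiles, where $F_c(z,q)$ is a nontrivial linear combination rather than a single-term specialization.

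The main obstacle will be the bookkeeping in the $q$-difference system for width-$3$ weighted profiles: unlike the unweighted modulus-$3$ case, the weighting couples the profiles asymmetrically, so one must carefully determine which local moves are permitted and with what weight, and then actually solve (not just set up) the resulting $4\times 4$-type system to extract closed forms. I expect the hardest single step to be proving that the conjectured sum sides satisfy the recursion — this is where the unusual correction polynomials must be produced, and it will likely require a clever partial-fraction or telescoping manipulation of the $q$-binomial-type summands, possibly after splitting each sum according to the parity of $n-2m$ or isolating the $n=2m$ boundary terms. Once the recursion and a finite number of initial coefficients are matched, uniqueness of solutions to such $q$-difference systems (with the normalization $F_c(0,q)=1$ or an analogous base case) closes the argument for all four identities simultaneously. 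As a consistency check I would expand both sides to $O(q^{30})$ or so before committing to the algebra.
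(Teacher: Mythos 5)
Your proposal follows essentially the same route as the paper: the right-hand products are realized through the Han--Xiong formula (Proposition \ref{P:DSPPproductgeneral}) as generating functions for symmetric cylindric partitions of width 6, i.e.\ DSPPs of width 3 with weight $(1,2,2,1)$ and modulus 6, the coupled $q$-difference system of Section \ref{S:recurrences} is written down for the four profiles, the double sums are guessed as the coefficient sequences $h_\delta(n)$, and the verification is carried out by uncoupling to recurrences in $n$ and applying creative telescoping (the $q$-Zeilberger algorithm), with initial conditions guaranteeing uniqueness and the specialization $z=1$ giving the theorem. The only cosmetic difference is that you propose to check the functional equations for the full two-variable series directly, whereas the paper works coefficientwise with the uncoupled recurrences.
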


We further use our machinery to offer new proofs of the Little G\"{o}llnitz and G\"{o}llnitz--Gordon identities \cite[Theorem 7.11]{A}, and \cite{G}.
\begin{theorem}[Little G\"{o}llnitz and G\"{o}llnitz--Gordon Identities]\label{C:gollnitz}
We have
\begin{align}
&\sum_{n \geq 0} \frac{\left(-q;q^2\right)_n}{\left(q^2;q^2\right)_n}q^{n^2+2n} =\frac{1}{\left(q^3,q^4,q^5;q^8\right)_{\infty}}, \label{E:GoellnitzGordon1} \\
&\sum_{n \geq 0} \frac{\left(-q;q^2\right)_n}{\left(q^2;q^2\right)_n}q^{n^2+n} =\frac{1}{\left(q^3;q^4\right)_{\infty}(q^2;q^8)_{\infty}}, \label{E:LittleGoellnitz1}\\
&\sum_{n \geq 0} \frac{\left(-q;q^2\right)_n}{\left(q^2;q^2\right)_n}q^{n^2} =\frac{1}{\left(q,q^4,q^7;q^8\right)_{\infty}}, \label{E:GoellnitzGordon2} \\
&\sum_{n \geq 0} \frac{\left(-q^{-1};q^2\right)_{n}}{\left(q^2;q^2\right)_n}q^{n^2+n} =\frac{1}{\left(q;q^4\right)_{\infty}\left(q^6;q^8\right)_{\infty}} \label{E:LittleGoellnitz2}.
\end{align}
\end{theorem}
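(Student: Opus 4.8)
The plan is to obtain all four identities uniformly from the cylindric-partition machinery, exactly as Corteel and Welsh obtained \eqref{E:CorteelWelshwidth7rank3}, but now applied to the symmetric cylindric partitions (and the weighted skew double shifted plane partitions introduced above) whose product generating functions are supplied by the theorems of Borodin \cite{B} and Han--Xiong \cite{HX}. First I would match each product
\[\frac{1}{(q^3,q^4,q^5;q^8)_\infty},\quad \frac{1}{(q^3;q^4)_\infty(q^2;q^8)_\infty},\quad \frac{1}{(q,q^4,q^7;q^8)_\infty},\quad \frac{1}{(q;q^4)_\infty(q^6;q^8)_\infty}\]
to a profile $c$ of symmetric cylindric partitions of the width and level for which the Han--Xiong modulus equals $8$, so that the product formula identifies the product with $F_c(1)$, the suitably normalized generating function for such partitions with profile $c$. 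The four choices of $c$ should form a single family --- the symmetric analogue of the four profiles at a fixed level --- which is why the four identities come together, just as the $A_2$ Rogers--Ramanujan identities do in \cite{CW}.

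Next, following Corteel--Welsh, I would attach a variable $z$ recording the largest part (equivalently the number of columns) to get generating functions $F_c(z)$ and derive the system of $q$-difference equations expressing each $F_c(z)$ in terms of the $F_{c'}(zq)$, where $c'$ ranges over the profiles obtained from $c$ by removing a first column; these relations hold for the same structural reason as in \cite{CW, HX}, now with the symmetry constraint carried along. Because the relevant width is small, eliminating the auxiliary profiles should collapse this system to a single $q$-difference equation for one series $F(z)$, which one then solves by iteration. The resulting power series in $z$ will be a $q$-hypergeometric sum whose coefficients involve $(-q;q^2)_n/(q^2;q^2)_n$ together with a quadratic exponent of the form $q^{n^2+cn}$ --- precisely the shape of the Göllnitz sum-sides, with the symmetric constraint responsible for the $n^2$ in the exponent and the factor $(-q;q^2)_n$ in the numerator. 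Specializing $z$ to the value forced by the normalization used in the first step (with a shift $z\mapsto z/q$ in the last case to account for the $(-q^{-1};q^2)_n$ appearing in \eqref{E:LittleGoellnitz2}) and invoking the product formula then yields \eqref{E:GoellnitzGordon1}--\eqref{E:LittleGoellnitz2}.

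I expect the main obstacle to be twofold. First, one must pin down the correct symmetric profiles and verify that the Han--Xiong formula genuinely produces the modulus-$8$ product with the listed residue classes rather than some neighboring product; this combinatorial bookkeeping is what makes the whole scheme work and is easy to get slightly wrong. Second, solving the reduced $q$-difference equation and recognizing its iterated solution \emph{in the exact classical form} stated above --- not merely as \emph{some} equivalent $q$-series --- will most likely require a short hypergeometric transformation at the very end; handling this last identification, together with the $z\mapsto z/q$ subtlety needed for \eqref{E:LittleGoellnitz2}, is where the real work lies.
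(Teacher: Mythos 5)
Your general machinery is the right one --- realize the products via the Han--Xiong formula, write down the Corteel--Welsh-type system in $z$, uncouple, iterate, then specialize --- and this is indeed how the paper proceeds. But the concrete identification in your first step is wrong, and it is exactly the step you yourself flag as ``easy to get slightly wrong.'' Symmetric cylindric partitions are weighted DSPPs with weight $(1,2,\dots,2,1)$, so the only way to make $8$ appear as a modulus in Proposition \ref{P:DSPPproductgeneral} is to take width-$4$ symmetric profiles (DSPP width $2$, $A_3=4$); the resulting products are of the shape $1/\left((q,q^3,q^4;q^4)_\infty(q^4;q^8)_\infty\right)$, $1/\left((q^3,q^3,q^4;q^4)_\infty(q^2;q^8)_\infty\right)$, $1/\left((q,q,q^4;q^4)_\infty(q^6;q^8)_\infty\right)$ --- note the repeated factors --- and they are what feeds Theorem \ref{T:scpwidth2}, not the G\"{o}llnitz--Gordon products $1/(q^3,q^4,q^5;q^8)_\infty$, etc. Likewise, the symmetry constraint is not what generates $q^{n^2}$ and $(-q;q^2)_n$: no symmetric objects enter the paper's proof of this theorem at all.

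What the paper actually uses (announced at the end of Section \ref{S:definition}) is the standard-weight width-$3$ DSPPs: with ${\bf a}=(1,1,1,1)$ one has $A_4=4$, the modulus-$4$ part $W_1^{\bf a}$ of the product is cancelled by the normalizing factor $(q;q)_\infty$, and the modulus-$8$ part $W_2^{\bf a}$ for the four profiles $(1,1,1)$, $(1,1,-1)$, $(1,-1,1)$, $(-1,1,1)$ delivers exactly the four right-hand sides of the theorem. The system \eqref{E:adjustedDSPPrecurrencegeneral} for these four profiles (after invoking $\mathrm{DSPP}_{\delta}=\mathrm{DSPP}_{-\mathrm{rev}(\delta)}$) collapses to a single $q$-difference equation for $H_{\gamma}$ with $\gamma=(1,-1,1)$, whose coefficient recurrence iterates directly to $h_{\gamma}(n)=q^{n^2}(-q;q^2)_n/(q^2;q^2)_n$ --- no hypergeometric transformation is needed at the end, and \eqref{E:LittleGoellnitz2} is obtained by evaluating $H_{\sigma}(1)$ through the coupled relations rather than by a shift $z\mapsto z/q$. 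So while your proposed pipeline coincides with the paper's, the missing ingredient is the correct choice of objects, profiles and weights; as written, your matching step cannot produce the stated products, and the remainder of the plan has no family of objects to run on.
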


As a final application, we give refinements of the following Schmidt-type partition identities appearing in recent work of Andrews and Paule \cite{AP}.  Identity \eqref{E:schmidtdistinct} was first conjectured in \cite{S} by Schmidt, who himself subsequently gave a proof.  The first two identities also independently appeared in work by the second author \cite{U} in the context of certain weighted partition identities. Interested readers are invited to look into these works.

A {\it partition} $\lambda$ of an integer $n$ is a sequence of positive integers whose {\it parts} satisfy
$$
\lambda_1 \geq \lambda_2 \geq \dots \geq \lambda_{\ell} \qquad \text{and} \qquad |\lambda|:=\sum_{j=1}^{\ell} \lambda_j=n.
$$
We call such $\lambda$ a partition of \textit{size} $n$.

Let $\mathcal{P}$ (resp. $\mathcal{D}$) denote the set of partitions (resp. partitions with distinct parts---when the inequalities between the parts are all strict). Also let $\diamondsuit$ denote the set of {\it partition diamonds}, i.e. finite sequences of integers $\lambda_1, \lambda_2, \dots,$ where 
$$
\lambda_1 \geq  \begin{matrix}\lambda_2 \\[-2ex]\\ \lambda_3 \end{matrix} \geq \lambda_4 \geq  \begin{matrix}\lambda_5 \\[-2ex]\\ \lambda_6 \end{matrix}  \geq \lambda_7 \geq  \begin{matrix}\lambda_8 \\[-2ex]\\ \lambda_9 \end{matrix}  \geq \lambda_{10} \geq  \begin{matrix}\lambda_{11} \\[-2ex]\\ \lambda_{12} \end{matrix}  \geq \lambda_{13} \geq \dots.
$$
\begin{theorem}[Theorems 1, 2 and 4 of \cite{AP}, respectively]\label{T:AndrewsPaule}
We have
\begin{align}
&\sum_{\lambda \in \mathcal{D}}q^{\lambda_1+\lambda_3+\lambda_5+\dots}=\frac{1}{(q;q)_{\infty}}, \label{E:schmidtdistinct}  \\  &\sum_{\lambda \in \mathcal{P}}q^{\lambda_1+\lambda_3+\lambda_5+\dots}=\frac{1}{(q;q)_{\infty}^2},   \label{E:refinedschmidtunrestricted1q=1}  \\ &\sum_{\lambda \in \diamondsuit}q^{\lambda_1+\lambda_4+\lambda_7 + \dots} = \frac{(-q;q)_{\infty}}{(q;q)^3_{\infty}}. \label{E:schmidtdiamond} 
\end{align}
\end{theorem}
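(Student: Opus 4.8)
The plan is to prove the three identities by a common device: exhibit each left-hand side as the generating function for a family of partition-like arrays that is controlled by a sequence of \emph{independent} nonnegative ``gap'' parameters, so that summing the resulting geometric series produces the stated infinite products. This is exactly the mechanism behind the product formulas of Borodin and of Han--Xiong for cylindric partitions and (weighted) skew double shifted plane partitions, here specialized to very thin shapes; so one may view each of \eqref{E:schmidtdistinct}--\eqref{E:schmidtdiamond} as an instance of those product formulas, but the relevant special cases are also quick to verify directly, and I would do so.

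For \eqref{E:refinedschmidtunrestricted1q=1} I would write an ordinary partition as $\lambda_1\ge\lambda_2\ge\cdots$ (padded by zeros), set $d_j=\lambda_j-\lambda_{j+1}\ge 0$, and note that $\lambda_1+\lambda_3+\lambda_5+\cdots=\sum_{j\ge 1}\lceil j/2\rceil\,d_j$; the $d_j$ are unconstrained, so the generating function is $\prod_{j\ge 1}(1-q^{\lceil j/2\rceil})^{-1}=\prod_{s\ge1}(1-q^s)^{-2}=(q;q)_\infty^{-2}$, each exponent $s$ arising from exactly the two indices $j\in\{2s-1,2s\}$. For \eqref{E:schmidtdiamond} I would run the same argument along the spine of links $L_m:=\lambda_{3m+1}$: with $g_m:=L_{m-1}-L_m\ge0$ one has $\lambda_1+\lambda_4+\lambda_7+\cdots=\sum_{m\ge0}L_m=\sum_{m\ge1}m\,g_m$, while for the $m$-th diamond the two incomparable middle parts each range freely over the $g_m+1$ integers in $[L_m,L_{m-1}]$, contributing a factor $(g_m+1)^2$. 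Hence the generating function is $\prod_{m\ge1}\sum_{g\ge0}(g+1)^2q^{mg}=\prod_{m\ge1}\frac{1+q^m}{(1-q^m)^3}=\frac{(-q;q)_\infty}{(q;q)^3_\infty}$, using $\sum_{g\ge0}(g+1)^2x^g=(1+x)/(1-x)^3$. For \eqref{E:schmidtdistinct} the strict inequalities make the naive factorization fail: setting $c_j=\lambda_j-\lambda_{j+1}-1\ge 0$ (with $c_\ell=\lambda_\ell-1$) and splitting according to the parity of the number of parts $\ell$, the two pieces evaluate to $q^{k(k+1)}/(q;q)_k^2$ and $q^{k^2}(1-q^k)/(q;q)_k^2$, whose sum collapses to the Durfee-square series $\sum_{k\ge0}q^{k^2}/(q;q)_k^2=1/(q;q)_\infty$.

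The refinements announced in the paper should then follow with the same bookkeeping: introducing an extra variable $z$ (marking, for instance, the largest part $\lambda_1$) turns $z^{\lambda_1}$ into $\prod_j z^{g_j}$, so the computations above go through verbatim with $q^{\lceil j/2\rceil}$, resp. $q^m$, replaced by $zq^{\lceil j/2\rceil}$, resp. $zq^m$, and the Han--Xiong product formula for skew double shifted plane partitions already carries exactly such a parameter; setting $z=1$ recovers Theorem~\ref{T:AndrewsPaule}. The step I expect to require genuine care is the combinatorial set-up in the diamond case: one must choose the poset decomposition of a partition diamond so that the two incomparable middle parts truly contribute an independent $(g_m+1)^2$ (this is the origin of the numerator $1+q^m$, hence of the $(-q;q)_\infty$ factor and of the finer structure in the refinement), and one must pin down the boundary conventions — which parts may vanish and where the sequence is permitted to terminate — so that the count matches the statement. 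For \eqref{E:schmidtdistinct} the analogous subtlety is that the strict inequalities force the Durfee-square series rather than an outright product, so the last step there is the classical Durfee dissection instead of a factorization.
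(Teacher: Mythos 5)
Your proof is correct, but it takes a genuinely different route from the paper. You establish all three identities by direct, elementary gap-variable decompositions: for $\mathcal{P}$ the substitution $d_j=\lambda_j-\lambda_{j+1}$ gives $\prod_{j\ge1}(1-q^{\lceil j/2\rceil})^{-1}=(q;q)_\infty^{-2}$; for $\diamondsuit$ the links $L_m=\lambda_{3m+1}$ together with the independent factor $(g_m+1)^2$ per diamond give $\prod_{m\ge1}(1+q^m)/(1-q^m)^3=(-q;q)_\infty/(q;q)_\infty^3$; and for $\mathcal{D}$ the split by parity of the number of parts collapses to the classical Durfee-square series $\sum_{k\ge0}q^{k^2}/(q;q)_k^2=1/(q;q)_\infty$. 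I checked each of these computations and they are sound (with the standard convention that diamonds are read up to trailing zeros, which is also implicit in the paper's identification of $\diamondsuit$ with weighted DSPPs). The paper instead runs everything through its weighted cylindric-partition/DSPP framework: \eqref{E:schmidtdiamond} falls out of the Han--Xiong product formula (Proposition \ref{P:DSPPproductgeneral}) applied to $\mathrm{DSPP}^{(0,1,0)}_{(1,-1)}$, \eqref{E:refinedschmidtunrestricted1q=1} comes from the weighted identification with $\mathrm{CP}^{(0,1)}_{(-1,1)}$ together with the recurrences of Section \ref{S:recurrences}, and \eqref{E:schmidtdistinct} is deduced from the two-variable identity \eqref{E:refinedschmidtdistinct1}, itself proved via an inhomogeneous $q$-difference system for cylindric partitions into distinct parts and then read off through largest hook lengths (Corollary \ref{C:cor1}). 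Your approach buys a short, self-contained proof needing no machinery beyond two classical facts; the paper's approach buys the two-variable refinements of Theorem \ref{T:RefinedAndrewsPaule} and their combinatorial corollaries, which is the purpose of that section. One small caution about your closing remark: inserting $z^{\lambda_1}$ does carry your argument over verbatim for \eqref{E:refinedschmidtunrestricted1} and \eqref{E:refinedschmidtdiamond}, but for distinct parts the same bookkeeping produces $1+\sum_{k\ge1}z^{2k-1}q^{k^2}/(zq;q)_k^2$, which is equal to, but not visibly identical with, the right-hand side of \eqref{E:refinedschmidtdistinct1}; since the statement at hand is only the $z=1$ theorem, this does not affect the validity of your proof.
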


We prove the following two-variable versions.
\begin{theorem}\label{T:RefinedAndrewsPaule}
We have
\begin{align}
&\sum_{\lambda \in \mathcal{D}}z^{\lambda_1}q^{\lambda_1+\lambda_3+\lambda_5+\dots}=\sum_{n \geq 0} \frac{z^{2n}q^{n(n+1)}}{(zq;q)_n(zq;q)_{n+1}}, \label{E:refinedschmidtdistinct1} \\ &\sum_{\lambda \in \mathcal{D}}z^{\lambda_1}q^{\lambda_2+\lambda_4+\lambda_6+\dots}=1+\sum_{n \geq 1} \frac{z^{2n-1}q^{n(n-1)}}{(z,zq;q)_{n}}, \label{E:refinedschmidtdistinct2} \\  &\sum_{\lambda \in \mathcal{P}}z^{\lambda_1}q^{\lambda_1+\lambda_3+\lambda_5+\dots}=\frac{1}{(zq;q)_{\infty}^2}, \label{E:refinedschmidtunrestricted1} \\ &\sum_{\lambda \in \mathcal{P}}z^{\lambda_1}q^{\lambda_2+\lambda_4+\lambda_6+\dots}=\frac{1}{(1-z)(zq;q)_{\infty}^2},  \label{E:refinedschmidtunrestricted2} \\ &\sum_{\lambda \in \diamondsuit}z^{\lambda_1}q^{\lambda_1+\lambda_4+\lambda_7 + \dots} = \frac{(-zq;q)_{\infty}}{(zq;q)^3_{\infty}}. \label{E:refinedschmidtdiamond}
\end{align}
\end{theorem}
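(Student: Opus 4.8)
The plan is to prove \eqref{E:refinedschmidtunrestricted1}, \eqref{E:refinedschmidtdistinct1} and \eqref{E:refinedschmidtdiamond} directly by generating-function arguments in the spirit of MacMahon's partition analysis --- choosing ``difference coordinates'' in which the relevant sums decouple into products over the parts --- and then to deduce the two even-indexed companions \eqref{E:refinedschmidtdistinct2} and \eqref{E:refinedschmidtunrestricted2} from \eqref{E:refinedschmidtdistinct1} and \eqref{E:refinedschmidtunrestricted1} by a short bijection. For \eqref{E:refinedschmidtunrestricted1}, encode $\lambda\in\mathcal P$ by the pair $(\alpha,\beta)$ of its odd- and even-indexed subsequences; then $(\alpha,\beta)$ ranges over all pairs of partitions with $\alpha_1\ge\beta_1\ge\alpha_2\ge\beta_2\ge\cdots$, the weight is $z^{\alpha_1}q^{|\alpha|}$, and for each fixed $\alpha$ there are exactly $\prod_{i\ge1}(\alpha_i-\alpha_{i+1}+1)$ admissible $\beta$ (with $\alpha_{i+1}:=0$ eventually). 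Substituting $d_i:=\alpha_i-\alpha_{i+1}\ge0$ gives $\alpha_1=\sum_j d_j$, $|\alpha|=\sum_j j\,d_j$ and $\prod_i(\alpha_i-\alpha_{i+1}+1)=\prod_j(d_j+1)$, so the generating function factors as $\prod_{j\ge1}\sum_{d\ge0}(d+1)(zq^j)^d=\prod_{j\ge1}(1-zq^j)^{-2}=(zq;q)_\infty^{-2}$, which is the claim.

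Identity \eqref{E:refinedschmidtdiamond} is proved the same way. A partition diamond is precisely the datum of its backbone of links $m_1=\lambda_1\ge m_2=\lambda_4\ge m_3=\lambda_7\ge\cdots$ (an eventually-zero weakly decreasing sequence) together with, for each consecutive pair $m_k\ge m_{k+1}$, an \emph{independent} choice of the two diamond entries $\lambda_{3k-1},\lambda_{3k}$, each in $\{m_{k+1},\dots,m_k\}$; this contributes the factor $(m_k-m_{k+1}+1)^2$. With $z$ recording $\lambda_1=m_1$ and $q$ recording $m_1+m_2+\cdots$, the substitution $d_k:=m_k-m_{k+1}$ turns the generating function into $\prod_{j\ge1}\sum_{d\ge0}(d+1)^2(zq^j)^d=\prod_{j\ge1}\frac{1+zq^j}{(1-zq^j)^3}=\frac{(-zq;q)_\infty}{(zq;q)_\infty^3}$, as desired. (Specializing $z=1$ in these two computations recovers \eqref{E:refinedschmidtunrestricted1q=1} and \eqref{E:schmidtdiamond}.)

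For \eqref{E:refinedschmidtdistinct1} the same philosophy works, but the bookkeeping is more delicate, and I expect this to be the main obstacle. Pair up consecutive parts of $\lambda\in\mathcal D$ and write $\ell(\lambda)\in\{2n,2n+1\}$; introduce the nonnegative gap coordinates $x_j:=\lambda_{2j-1}-\lambda_{2j}-1$ (inside the $j$th pair) and $y_j:=\lambda_{2j}-\lambda_{2j+1}-1$ (between consecutive pairs) for $1\le j\le n$, where $\lambda_{2n+1}=:w\ge0$ is the unpaired last part if $\ell=2n+1$ and $w:=0$ encodes the case $\ell=2n$. The key point is that when it is present the unpaired part $\lambda_{2n+1}=w$ is \emph{odd-indexed}, hence $q$-tracked, so --- using $w=0$ in the case $\ell=2n$ --- the exponent of $q$ is uniformly $\sum_{i=1}^{n}\lambda_{2i-1}+w=(n+1)w+\sum_{j=1}^n j(x_j+y_j)+n(n+1)$, while $\lambda_1=w+\sum_{j=1}^n(x_j+y_j)+2n$. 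Summing the resulting geometric series over $w$ and over $x_1,\dots,x_n,y_1,\dots,y_n$ gives, for each $n\ge0$, the term $z^{2n}q^{n(n+1)}(1-zq^{n+1})^{-1}\prod_{j=1}^n(1-zq^j)^{-2}=\frac{z^{2n}q^{n(n+1)}}{(zq;q)_n(zq;q)_{n+1}}$, and summing over $n$ yields the stated right-hand side. The delicate part is checking that this coordinate change is a bijection $\bigsqcup_{n\ge0}\{(w,x_\bullet,y_\bullet)\}\to\mathcal D$ and that the exponents collapse exactly as above.

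Finally, \eqref{E:refinedschmidtdistinct2} and \eqref{E:refinedschmidtunrestricted2} follow by deleting the largest part. If $\lambda\in\mathcal D$ is nonempty, then $\lambda\setminus\{\lambda_1\}\in\mathcal D$ has odd-indexed parts equal to the even-indexed parts of $\lambda$, and $\lambda_1$ ranges over the integers $>\lambda_2$ (or over $\{1,2,\dots\}$ if $\lambda$ has a single part). Summing the geometric series in $\lambda_1$ expresses the left side of \eqref{E:refinedschmidtdistinct2} as $1+\frac{z}{1-z}F(z,q)$, where $F(z,q)$ is the right side of \eqref{E:refinedschmidtdistinct1}; using $(z;q)_n=(1-z)(zq;q)_{n-1}$ and shifting the summation index identifies this with the series in \eqref{E:refinedschmidtdistinct2}. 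The same argument for $\mathcal P$ (with $\lambda_1\ge\lambda_2$ in place of $\lambda_1>\lambda_2$) expresses the left side of \eqref{E:refinedschmidtunrestricted2} as $\frac{1}{1-z}$ times the right side of \eqref{E:refinedschmidtunrestricted1}, namely $\frac{1}{(1-z)(zq;q)_\infty^2}$, completing the proof.
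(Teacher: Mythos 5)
Your proof is correct, but it follows a genuinely different route from the paper. You establish \eqref{E:refinedschmidtunrestricted1}, \eqref{E:refinedschmidtdiamond} and \eqref{E:refinedschmidtdistinct1} by direct elementary counting: splitting a partition into its interlacing odd/even-indexed subsequences (resp.\ a diamond into its backbone plus two independent interlaced entries per link), passing to difference coordinates $d_j=\alpha_j-\alpha_{j+1}$ so that the sum factors as $\prod_j\sum_{d\ge0}(d+1)(zq^j)^d$ or $\prod_j\sum_{d\ge0}(d+1)^2(zq^j)^d$, and, for distinct parts, using the pair/gap coordinates $(w,x_j,y_j)$, whose exponent bookkeeping I checked and which does collapse to $z^{2n}q^{n(n+1)}(zq;q)_n^{-1}(zq;q)_{n+1}^{-1}$; the even-indexed companions then follow from largest-part removal together with $(z;q)_n=(1-z)(zq;q)_{n-1}$ and an index shift, exactly as you say. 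The paper instead realizes each left-hand side as a weighted cylindric partition, DSPP, or distinct-part cylindric partition generating function (e.g.\ $\mathrm{DSPP}^{(0,1,0)}_{(1,-1)}(z;q)$ for diamonds and $\mathrm{DCP}^{(0,1)}_{(1,-1)}(z;q)$ for \eqref{E:refinedschmidtdistinct1}) and then solves the Corteel--Welsh-type coupled systems of Propositions \ref{P:CPrecurrencegeneral} and \ref{P:DSPPrecurrencegeneral}, plus the inhomogeneous recurrence for distinct-part cylindric partitions, by iteration; your largest-part-removal step for \eqref{E:refinedschmidtdistinct2} and \eqref{E:refinedschmidtunrestricted2} is in essence the first equation of that coupled system, proved combinatorially. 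What each buys: your argument is self-contained, bijective, and makes the refinements transparent (and at $z=1$ reproves \eqref{E:refinedschmidtunrestricted1q=1} and \eqref{E:schmidtdiamond} without Han--Xiong's product theorem), whereas the paper's route deliberately situates these Schmidt-type identities inside the weighted CP/DSPP machinery that is the subject of the article and generalizes to profiles where no such clean direct decomposition is available. The only point to make explicit in a final write-up is the convention that diamonds and partitions are identified up to trailing zeros, so that your backbone/interlacing parametrizations are genuine bijections.
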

It is easy to observe that, as $z\mapsto 1$, \eqref{E:refinedschmidtunrestricted1} and \eqref{E:refinedschmidtdiamond} imply \eqref{E:refinedschmidtunrestricted1q=1} and \eqref{E:schmidtdiamond}, respectively.

  Identities \eqref{E:refinedschmidtdistinct1}--\eqref{E:refinedschmidtdiamond} have natural combinatorial interpretations; for example, \eqref{E:refinedschmidtdistinct1} implies the following.
\begin{corollary}\label{C:cor1}
The number of partitions into distinct parts with largest part $m$ and $n=\lambda_1+\lambda_3+\lambda_5+\dots$ equals the number of partitions of $n$ into unrestricted parts with largest hook length $m$.
\end{corollary}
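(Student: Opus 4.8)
The plan is to compare the coefficient of $z^m q^n$ on the two sides of \eqref{E:refinedschmidtdistinct1}. On the left this coefficient is, by definition, the number of partitions into distinct parts with $\lambda_1 = m$ and $\lambda_1 + \lambda_3 + \lambda_5 + \dots = n$, which is the first quantity in the statement; so the whole content of the corollary is the assertion that the right-hand side of \eqref{E:refinedschmidtdistinct1} is the generating function $\sum_{\mu} z^{h(\mu)} q^{|\mu|}$, summed over all partitions $\mu$ (including $\emptyset$), where $h(\mu) := \mu_1 + \ell(\mu) - 1$ for $\mu \neq \emptyset$ is the length of the hook based at the cell $(1,1)$ -- equivalently the largest hook length of $\mu$ -- and $h(\emptyset) := 0$; here $\ell(\mu)$ denotes the number of parts. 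I would prove this via a Durfee-rectangle decomposition.

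To a partition $\mu$ I would associate the largest integer $n = n(\mu) \geq 0$ such that $\mu$ has at least $n+1$ parts each $\geq n$; equivalently, $n$ is maximal such that $\mu$ contains the rectangular diagram with $n+1$ rows and $n$ columns (and $n(\emptyset) = 0$). Deleting this rectangle from $\mu$ leaves the partition $\beta := (\mu_1 - n, \mu_2 - n, \dots, \mu_{n+1} - n)$ to the right of it, which has at most $n+1$ parts, and the partition $\delta := (\mu_{n+2}, \mu_{n+3}, \dots)$ below it, all of whose parts are $\leq n$ by maximality of $n$. The map $\mu \mapsto (n, \beta, \delta)$ is a bijection from the set of all partitions onto the set of triples $(n, \beta, \delta)$ with $n \geq 0$, $\beta$ a partition with $\ell(\beta) \leq n+1$, and $\delta$ a partition with every part $\leq n$; its inverse appends $\beta_i$ cells to row $i$ of the $(n+1) \times n$ rectangle for $1 \leq i \leq n+1$ and places $\delta$ in the rows below.

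Reading off statistics from the decomposition, $|\mu| = n(n+1) + |\beta| + |\delta|$, and for $\mu \neq \emptyset$ one has $\mu_1 = n + \beta_1$ and $\ell(\mu) = (n+1) + \ell(\delta)$, whence $h(\mu) = \mu_1 + \ell(\mu) - 1 = 2n + \beta_1 + \ell(\delta)$; this last equality holds in the case $\mu = \emptyset$ as well (all terms vanish). Summing over all $\mu$, and using that $1/(zq;q)_n$ is the generating function for partitions with parts in $\{1, \dots, n\}$ with $z$ marking the number of parts, gives
\[
\sum_{\mu} z^{h(\mu)} q^{|\mu|} = \sum_{n \geq 0} z^{2n} q^{n(n+1)} \Bigl( \sum_{\delta:\ \delta_1 \leq n} z^{\ell(\delta)} q^{|\delta|} \Bigr) \Bigl( \sum_{\beta:\ \ell(\beta) \leq n+1} z^{\beta_1} q^{|\beta|} \Bigr) = \sum_{n \geq 0} \frac{z^{2n} q^{n(n+1)}}{(zq;q)_n\,(zq;q)_{n+1}},
\]
where for the last factor one conjugates $\beta$, so that ``at most $n+1$ parts'' becomes ``all parts $\leq n+1$'' and $\beta_1$ becomes the number of parts, turning the $\beta$-sum into $1/(zq;q)_{n+1}$. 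This is precisely the right-hand side of \eqref{E:refinedschmidtdistinct1}, so comparing coefficients of $z^m q^n$ finishes the proof.

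The hard part will be the middle step: verifying that $n(\mu)$ is well defined and that $\mu \mapsto (n, \beta, \delta)$ is a genuine bijection with exactly those constraints, and then checking that the largest hook length is \emph{exactly} $2n + \beta_1 + \ell(\delta)$ -- in particular keeping track of the $-1$ from the corner cell and correctly handling the degenerate cases $\mu = \emptyset$ and $n = 0$, where the rectangle collapses to a single empty row. After that the generating-function bookkeeping is routine.
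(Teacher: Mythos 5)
Your argument is correct and is exactly the justification the paper leaves implicit: the corollary is presented there as the ``natural combinatorial interpretation'' of \eqref{E:refinedschmidtdistinct1}, which amounts to recognizing its right-hand side as $\sum_{\mu} z^{\mu_1+\ell(\mu)-1}q^{|\mu|}$ via the $(n+1)\times n$ Durfee-rectangle decomposition you carry out. Your bookkeeping (the bijection $\mu\mapsto(n,\beta,\delta)$, the identity $h(\mu)=2n+\beta_1+\ell(\delta)$, and the conjugation giving the factor $1/(zq;q)_{n+1}$) is accurate, so the proposal matches the paper's intended proof, merely with the details written out.
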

Note that \eqref{E:schmidtdistinct} follows from Corollary~\ref{C:cor1}.   Similarly, setting $z \mapsto zq$ in \eqref{E:refinedschmidtdistinct2}, one has the following combinatorial interpretation.
\begin{corollary}
For $n \geq 1$, the number of partitions into distinct parts with largest part $m$ and $n=\lambda_1+(\lambda_2+\lambda_4+\lambda_6+\dots)$ equals the number of partitions of $n+1$ into unrestricted parts greater than 1 with largest hook length $m+1$.
\end{corollary}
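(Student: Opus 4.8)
The plan is to obtain the corollary by substituting $z \mapsto zq$ in identity \eqref{E:refinedschmidtdistinct2}, analogously to the way Corollary~\ref{C:cor1} is read off from \eqref{E:refinedschmidtdistinct1}. After the substitution the left-hand side of \eqref{E:refinedschmidtdistinct2} becomes $\sum_{\lambda \in \mathcal{D}} z^{\lambda_1} q^{\lambda_1 + \lambda_2 + \lambda_4 + \cdots}$, whose coefficient of $z^m q^n$ is the number of partitions into distinct parts with $\lambda_1 = m$ and $\lambda_1 + (\lambda_2 + \lambda_4 + \cdots) = n$, while the right-hand side becomes
\[
1 + \sum_{j \geq 1} \frac{z^{2j-1} q^{j^2 + j - 1}}{(zq, zq^2; q)_j}.
\]
So the corollary follows once I show that, writing $h(\mu) := \mu_1 + \ell(\mu) - 1$ for the largest hook length of a partition $\mu$ with $\ell(\mu)$ parts (the hook of the corner cell),
\[
\sum_{j \geq 1} \frac{z^{2j-1} q^{j^2 + j - 1}}{(zq, zq^2; q)_j} \;=\; \sum_{\mu \neq \emptyset,\, \mu_i \geq 2} z^{h(\mu) - 1} q^{|\mu| - 1},
\]
since comparing coefficients of $z^m q^n$ for $n \geq 1$ then equates the two counts in the corollary; the leftover constant $1$ is the contribution of $\lambda = \emptyset$, affecting only the coefficient of $z^0 q^0$, which is why the hypothesis $n \geq 1$ is needed.

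To prove this identity it is equivalent, after clearing the common factor $zq$, to show $\sum_{\mu \neq \emptyset,\, \mu_i \geq 2} z^{h(\mu)} q^{|\mu|} = \sum_{j \geq 1} z^{2j} q^{j(j+1)} / (zq, zq^2; q)_j$. First apply the first-column-stripping bijection
\[
\mu \longmapsto \widetilde{\mu} := (\mu_1 - 1, \mu_2 - 1, \dots)
\]
from nonempty partitions with all parts $\geq 2$ onto arbitrary nonempty partitions; under it $|\mu| = |\widetilde{\mu}| + \ell(\widetilde{\mu})$ and $h(\mu) = h(\widetilde{\mu}) + 1$. Then decompose $\widetilde{\mu}$ about its Durfee square of side $d \geq 1$: a partition $\alpha$ with at most $d$ parts lies to the right and a partition $\beta$ with all parts $\leq d$ lies below, with $|\widetilde{\mu}| = d^2 + |\alpha| + |\beta|$, $\ell(\widetilde{\mu}) = d + \ell(\beta)$, and $h(\widetilde{\mu}) = (2d-1) + \alpha_1 + \ell(\beta)$. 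Conjugating $\alpha$ gives $\sum_\alpha z^{\alpha_1} q^{|\alpha|} = 1/(zq;q)_d$, while a part of $\beta$ of size $k \leq d$ now carries weight $z q^{k+1}$ (the extra $q$ coming from the $\ell(\widetilde{\mu})$ in the exponent of $q$), so $\sum_\beta z^{\ell(\beta)} q^{|\beta| + \ell(\beta)} = 1/(zq^2;q)_d$. Assembling the weights yields $\sum_{\widetilde{\mu} \neq \emptyset} z^{h(\widetilde{\mu})} q^{|\widetilde{\mu}| + \ell(\widetilde{\mu})} = \sum_{d \geq 1} z^{2d-1} q^{d(d+1)} / \big( (zq;q)_d (zq^2;q)_d \big)$, and multiplying by the factor $z$ produced by $h(\mu) = h(\widetilde{\mu}) + 1$ gives exactly what is needed.

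This corollary is essentially a formal consequence of \eqref{E:refinedschmidtdistinct2}, so I do not expect a serious obstacle; the one place demanding attention is the Durfee-square bookkeeping, and in particular the observation that stripping the first column attaches an extra factor of $q$ to each part of the ``below'' partition $\beta$. That single observation is what converts a factor $1/(zq;q)_d$ into $1/(zq^2;q)_d$ and turns $q^{d^2}$ into $q^{d(d+1)}$, matching the shape of the right-hand side produced by $z \mapsto zq$; decomposing a parts-$\geq 2$ partition directly would instead force the Durfee side $d = 1$ to be handled as an awkward special case, so routing through $\widetilde{\mu}$ is the cleaner path.
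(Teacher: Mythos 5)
Your proposal is correct and follows essentially the same route as the paper, which obtains the corollary by setting $z \mapsto zq$ in \eqref{E:refinedschmidtdistinct2} and reading off the combinatorial interpretation of the resulting series. You merely make explicit (via the column-stripping bijection and the Durfee-square dissection) the hook-length interpretation of the sum side that the paper leaves implicit, and your bookkeeping there is accurate.
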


In Section \ref{S:definition}, we define our objects of study---weighted cylindric partitions, symmetric cylindric partitions, and skew double shifted plane partitions---and state the product generating functions for these that follow from Han and Xiong's work in \cite{HX}.  In Section \ref{S:recurrences}, we record our Corteel--Welsh-type recurrences for two variable generating functions.  In Section \ref{S:proofs}, we use these recurrences to prove Theorems \ref{T:scpwidth2}, \ref{T:scpwidth3} and Theorem \ref{C:gollnitz}; Theorem \ref{T:scpwidth3} is proved with the aid of Mathematica.  In Section \ref{S:Applications}, we prove Theorem \ref{T:RefinedAndrewsPaule}.  We conclude in Section \ref{S:Outlook} by mentioning a few open problems and avenues for further work.  Appendix A contains a short proof of the modularity of the products in Proposition \ref{P:borodin}.  In Appendix B, we briefly mention the new computer algebra implementations developed in connection to this work.

\section*{Acknowledgements}

We would like to thank George E. Andrews, Jehanne Dousse, Guoniu Han, Ralf Hemmecke, and Peter Paule for their interest and encouragement. We would like to particularly thank Christian Krattenthaler and Wadim Zudilin for the discussion and their valuable feedback.

The first author is partially supported by the SFB/TRR 191 ``Symplectic Structures in Geometry, Algebra and Dynamics'', funded by the DFG (Projektnummer 281071066 TRR 191). Research of the second author is partly supported by EPSRC grant number EP/T015713/1 and partly by FWF grant P-34501N.

\section{Definitions and product sides}\label{S:definition}

Cylindric partitions were introduced by Gessel and Krattenthaler \cite{GK} as a type of repeating grid pattern of positive integers that are weakly decreasing along rows and columns; see Figure \ref{F:cylindric}.  Here, we will define them using the diagonal integer partitions.  For two partitions $\lambda$ and $\mu,$ we write $\lambda \succeq \mu$ if 
\begin{equation}\label{E:width1DSPP}\lambda_1 \geq \mu_1 \geq \lambda_2 \geq \mu_2 \geq \dots.\end{equation}
\begin{definition}
A {\it cylindric partition} of {\it width} $h$ with {\it profile} $\delta=(\delta_1, \dots, \delta_h) \in \{\pm 1\}^h$ is an $(h+1)$-tuple of integer partitions $(\lambda^0, \dots, \lambda^h)$ such that $\lambda^0=\lambda^h$ and $\lambda^{j-1} \succeq \lambda^j$ (resp. $\lambda^{j-1} \preceq \lambda^j$) if $\delta_j =-1$ (resp. if $\delta_j=1$).  We define the {\it size} of $\lambda$ as
$$
|\lambda|:=\sum_{j=0}^{h-1} |\lambda^j|.
$$
We define the {\it rank} of a cylindric partition $\lambda$ to be the number of $(-1)$'s in the profile.  We write $\mathcal{CP}_{\delta}$ for the set of cylindric partitions of profile $\delta$, including the ``empty cylindric partition'' of profile $\delta.$
\end{definition}

\begin{figure}[ht!]
\centering
    \begin{tikzpicture}[scale=.6, domain=0:2,every node/.style={minimum size=.60cm-\pgflinewidth, outer sep=0pt}]
\draw (3,0) grid (5,-1);
\draw (2,-1) grid (6,-2);
\draw (2,-2) grid (7,-3);
\draw (1,-3) grid (6,-4);
\draw (0,-4) grid (5,-5);
\draw (1,-5) grid (4,-6);
\draw (2,-6) grid (3,-7);
\node[rectangle,fill=yellow] at (4.5,-0.5){};
\node[rectangle,fill=yellow] at (5.5,-1.5){};
\node[rectangle,fill=yellow] at (0.5,-4.5){};
\node[rectangle,fill=yellow] at (1.5,-5.5){};
\node at (3.5,-.5) {{\Large 5}};
\node at (4.5,-.5) {{\Large 3}};
\node at (2.5,-1.5) {{\Large 4}};
\node at (3.5,-1.5) {{\Large 4}};
\node at (4.5,-1.5) {{\Large 2}};
\node at (5.5,-1.5) {{\Large 1}};
\node at (2.5,-2.5) {{\Large 4}};
\node at (3.5,-2.5) {{\Large 3}};
\node at (1.5,-3.5) {{\Large 5}};
\node at (2.5,-3.5) {{\Large 2}};
\node at (3.5,-3.5) {{\Large 2}};
\node at (0.5,-4.5) {{\Large 3}};
\node at (1.5,-4.5) {{\Large 2}};
\node at (2.5,-4.5) {{\Large 1}};
\node at (1.5,-5.5) {{\Large 1}};
\draw [-, line width=1mm, color=purple] (0,-4) -- (1,-4) -- (1,-3) -- (2,-3) -- (2,-1) -- (3,-1) -- (3, 0) -- (4,0);
\end{tikzpicture}

  \caption{A cylindric partition of width $9,$ profile
  $(-1,1,-1,1,1,-1,1,-1)$, rank 4 and size 42.  Here $\lambda^0$ and $\lambda^h$ are highlighted.  The purple line indicates the profile, from bottom-left to top-right.}
  
  \label{F:cylindric}
  
  \end{figure}

Graphically, a profile $\delta$ starts on the top left corner of the starting diagonal $\lambda^0$ and ends on the top left corner of the last diagonal $\lambda^h$.

 Borodin proved that the generating function for cylindric partitions of any profile is always an infinite product.  We use the formulation in \cite{HX}.

\begin{proposition}[\cite{B}, Proposition on p. 8]\label{P:borodin}
Let $\delta=(\delta_1, \dots, \delta_h)$ be a profile and define the multiset
$$
W_3(\delta):=\{h\} \cup \{j-i : 1 \leq i < j \leq h, \delta_i > \delta_j\} \cup \{h-(j-i):1 \leq i < j \leq h, \delta_i < \delta_j\}.
$$
Then
\begin{equation}\label{E:Borodin}
\mathrm{CP}_{\delta}(q):=\sum_{\lambda \in \mathcal{ CP}_{\delta}} q^{|\lambda|} = \prod_{k \in W_3(\delta)}\frac{1}{(q^k;q^h)_{\infty}}.
\end{equation}
\end{proposition}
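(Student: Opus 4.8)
The plan is to prove the product formula \eqref{E:Borodin} by translating the combinatorics of cylindric partitions into the framework of weighted lattice paths / non-intersecting lattice paths, where generating functions are computed by determinantal (Lindström–Gessel–Viennot) methods, and then evaluating the resulting determinant. Concretely, one interprets the $(h+1)$-tuple $(\lambda^0,\dots,\lambda^h)$ with $\lambda^0=\lambda^h$ and the prescribed interlacing relations $\lambda^{j-1}\succeq\lambda^j$ or $\lambda^{j-1}\preceq\lambda^j$ as a family of particle configurations on $\mathbb{Z}$ evolving in ``time'' $j=0,1,\dots,h$, subject to periodic boundary conditions in time. The interlacing conditions \eqref{E:width1DSPP} are exactly the local rules governing the Schur process, so the periodicity turns this into a \emph{periodic} Schur process in the sense of Borodin. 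The generating function is then a trace of a product of vertex operators $\Gamma_\pm(q^{\bullet})$ acting on fermionic Fock space, with the product of the operators dictated by the profile $\delta$: a $\delta_j=-1$ contributes one type of operator and a $\delta_j=+1$ the other.

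The key steps, in order: first I would set up the fermionic Fock space and the half-vertex operators $\Gamma_+(x)$, $\Gamma_-(x)$ together with their fundamental commutation relation $\Gamma_+(x)\Gamma_-(y) = \frac{1}{1-xy}\Gamma_-(y)\Gamma_+(x)$. Second, I would express $\mathrm{CP}_\delta(q)$ as $\mathrm{Tr}\bigl(q^{d}\, \Omega_\delta\bigr)$ where $d$ is the degree (energy) operator tracking the size, and $\Omega_\delta$ is the ordered product of $\Gamma_\pm$'s along the profile with spectral parameters being appropriate powers of $q$; the cyclic boundary condition $\lambda^0=\lambda^h$ is precisely what produces the trace rather than a vacuum-to-vacuum matrix element. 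Third, I would move all the $\Gamma_+$'s to the right past the $\Gamma_-$'s using the commutation relation; each crossing of a $\Gamma_+$ with spectral parameter $q^a$ past a $\Gamma_-$ with spectral parameter $q^b$ produces a factor $\frac{1}{1-q^{a+b}}$. Careful bookkeeping of \emph{which} pairs $(i,j)$ with $\delta_i>\delta_j$ or $\delta_i<\delta_j$ actually cross, and with which exponent — this is where the sets $\{j-i : \delta_i>\delta_j\}$ and $\{h-(j-i):\delta_i<\delta_j\}$ emerge — gives the numerator-free part of the product. Fourth, I would use the trace identity $\mathrm{Tr}(q^d \Gamma_-(\text{stuff})\Gamma_+(\text{stuff})) $: cycling $\Gamma_+$ around the trace past $q^d$ rescales its spectral parameter by $q^h$ (since $d$ shifts degrees and the full profile has ``length'' $h$), turning the finite product of crossing factors into an infinite product $\prod_{m\ge 0}\prod (1-q^{\bullet + mh})^{-1} = \prod_k (q^k;q^h)_\infty^{-1}$, and the leftover $\mathrm{Tr}(q^d) = (q^h;q^h)_\infty^{-1}$ accounts for the $\{h\}$ in $W_3(\delta)$.

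The main obstacle I expect is the precise combinatorial accounting in the third and fourth steps: correctly tracking, for each inversion of the profile, the exact power of $q$ that appears when a $\Gamma_+$ is commuted past a $\Gamma_-$, including the contributions from the degree operator $q^d$ as operators are cycled around the trace, and verifying that after regrouping these match exactly the multiset $W_3(\delta)$ with the correct multiplicities (rather than being off by a reflection $j-i \leftrightarrow h-(j-i)$ or by boundary terms). A secondary subtlety is justifying convergence and the interchange of the infinite sum over partitions with the operator manipulations, which holds for $|q|<1$ but should be stated. An alternative to the operator approach — and a useful sanity check — is a purely bijective/recursive argument: one can peel off the profile one step at a time, relating $\mathcal{CP}_\delta$ to $\mathcal{CP}_{\delta'}$ for a shorter profile via an involution on the interlacing conditions, but this tends to obscure the clean product and I would relegate it to a remark. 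I will present the vertex-operator computation as the main line, citing \cite{B} for the original statement and noting that the formulation with $W_3(\delta)$ is the one used in \cite{HX}.
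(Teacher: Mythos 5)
The paper itself offers no proof of this proposition: it is imported verbatim (in Han--Xiong's formulation) from Borodin's paper \cite{B}, and the only thing proved about it in the present article is the symmetry of the multiset $W_3(\delta)$ (Appendix A). So there is no in-paper argument to compare against; what you have outlined is essentially the argument of the cited source itself -- realize cylindric partitions as a periodic Schur process, write $\mathrm{CP}_\delta(q)$ as a trace of a cyclic word in half-vertex operators $\Gamma_\pm$, and evaluate by commuting and cycling. That is the right route, and citing \cite{B} is exactly what the paper does.

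As a proof, however, your text is still a plan: the substance lies precisely in the steps you defer. Concretely, (a) you must fix how the size is tracked -- either insert energy operators between consecutive $\Gamma$'s or specialize the spectral parameters to explicit powers of $q$ -- and check that this produces $q^{\sum_{j=0}^{h-1}|\lambda^j|}$ with $\lambda^h=\lambda^0$ counted once, since the choice here determines every exponent later; (b) the bookkeeping identifying the crossing factor of the pair $i<j$ as $j-i$ when $\delta_i>\delta_j$ and as $h-(j-i)$ when $\delta_i<\delta_j$ (and not the reflected assignment) is the heart of the computation and is only promised, not done; (c) the assertion $\mathrm{Tr}(q^d)=(q^h;q^h)_\infty^{-1}$ is not correct as literally written -- for the usual degree operator on the charge-zero Fock space one has $\mathrm{Tr}(q^d)=1/(q;q)_\infty$ -- and the factor $1/(q^h;q^h)_\infty$ corresponding to $h\in W_3(\delta)$ only emerges because the product of all spectral parameters around the cycle is $q^h$, so the effective period parameter of the trace is $q^h$; this needs to be said and justified, along with absolute convergence for $|q|<1$ permitting the rearrangements. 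None of these gaps is fatal -- carried out carefully the computation does yield \eqref{E:Borodin} -- but until they are carried out you have reproduced the strategy of \cite{B} rather than a complete proof.
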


\begin{remark}
This product is always symmetric---that is, every $k<h$ appears as many times in $W_3(\delta)$ as $h-k$.  So, up to a rational power of $q$, $\mathrm{CP}_{\delta}(q)$ is a modular form.  To the best of our knowledge a proof of this non-trivial fact was lacking in the literature, so we provide one in Appendix A.
\end{remark}

To look for ``sum-side'' companions to the above products, Corteel and Welsh introduced the two variable generating function, $
\mathrm{CP}_{\delta}(z;q):=\sum_{\lambda \in \mathcal{CP}_{\delta}} z^{\mathrm{max}(\lambda)}q^{|\lambda|},
$
where $\mathrm{max}(\lambda)$ denotes the size of the largest part among all partitions in $\lambda$.  They proved general systems of recurrences for these functions and found sum-generating functions that solved all cases for width 7 and rank 3.  The identity \eqref{E:CorteelWelshwidth7rank3} in the introduction is $(q;q)_{\infty}\mathrm{CP}_{\delta}(1;q)$ with $\delta=(-1,-1,-1,1,1,1,1)$.  Identities for all rank 3, width 8 cases were subsequently proved by Corteel, Dousse and the second author in \cite{CDU}.  Warnaar \cite{W} further conjectured identities for all rank 3, width $h$ profiles with $3 \nmid h$.

To allow for products much more general than those in \eqref{E:Borodin}, we consider one natural restriction and one generalization of cylindric partitions defined by Han and Xiong in \cite{HX}.
\begin{definition}
A {\it symmetric cylindric partition} $\lambda$ is a cylindric partition of the form \newline $(\lambda^h, \dots, \lambda^1, \lambda^0, \lambda^1, \dots, \lambda^h)$ with profile $\delta=(-\delta_h, \dots, -\delta_1, \delta_1, \dots, \delta_h).$  We define the {\it size} as
$$
|\lambda|:=|\lambda^0|+|\lambda^h|+2\sum_{j=1}^{h-1} |\lambda^j|.
$$
Let $\mathcal{SCP}_{\delta}$ be the set of symmetric cylindric partitions of profile $\delta$, including the ``empty partition''.
\end{definition}
\begin{remark}
Han and Xiong defined the size of symmetric cylindric partitions instead as $|\lambda|:=|\lambda^0|+2\sum_{j=1}^{h} |\lambda^j|$ and stated their product formula for this notion of size.  But we will consider general weighted sizes below that include both of these.
\end{remark}

\begin{figure}[ht!]
\centering
    \begin{tikzpicture}[scale=.6, domain=0:2,every node/.style={minimum size=.60cm-\pgflinewidth, outer sep=0pt}]
\draw (3,0) grid (4,-1);
\draw (1,-1) grid (5,-2);
\draw (1,-2) grid (6,-3);
\draw (0,-3) grid (5,-4);
\draw (1,-4) grid (4,-5);
\draw (2,-5) grid (3,-6);
\node[rectangle,fill=yellow] at (3.5,-0.5){};
\node[rectangle,fill=yellow] at (4.5,-1.5){};
\node[rectangle,fill=yellow] at (0.5,-3.5){};
\node[rectangle,fill=yellow] at (1.5,-4.5){};
\node[rectangle,fill=green] at (1.5,-1.5){};
\node[rectangle,fill=green] at (2.5,-2.5){};
\node[rectangle,fill=green] at (3.5,-3.5){};
\node at (3.5,-0.5) {{\Large 4}};
\node at (1.5,-1.5) {{\Large 5}};
\node at (2.5,-1.5) {{\Large 5}};
\node at (3.5,-1.5) {{\Large 3}};
\node at (4.5,-1.5) {{\Large 1}};
\node at (1.5,-2.5) {{\Large 5}};
\node at (2.5,-2.5) {{\Large 2}};
\node at (3.5,-2.5) {{\Large 2}};
\node at (0.5,-3.5) {{\Large 4}};
\node at (1.5,-3.5) {{\Large 3}};
\node at (2.5,-3.5) {{\Large 2}};
\node at (3.5,-3.5) {{\Large 1}};
\node at (1.5,-4.5) {{\Large 1}};
\draw [-, line width=1mm, color=purple] (0,-3) -- (1,-3) -- (1,-1) -- (3,-1) -- (3,0);
\end{tikzpicture}

 \caption{A symmetric cylindric partition of width $6,$ profile
  $(-1,1,1,-1,-1,1)$ and size 38.  The green highlighted partition indicates the line of symmetry.}
  \label{F:symmetriccylindric}
 \end{figure}
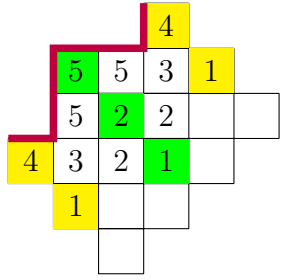

\begin{remark}
When viewing a symmetric cylindric partition on the face of a cylinder, one notices that there are two axes of symmetry, namely at $\lambda^0$ and at $\lambda^h$.  Thus, it is easy to see that $\mathrm{SCP}_{\delta}(q)=\mathrm{SCP}_{-\mathrm{rev}(\delta)}(q),$ where $\mathrm{rev}(\delta)$ is the reverse of $\delta.$
\end{remark}

If the restriction $\lambda^0=\lambda^h$ which creates a repeating pattern in a cylindric partition is dropped, then the resulting structures were called skew double shifted plane partitions by Han and Xiong \cite{HX}.
\begin{definition}
A {\it skew double shifted plane partition} (DSPP) of {\it width} $h$ with {\it profile} $\delta=(\delta_1, \dots, \delta_h) \in \{\pm 1\}^h$ is an $(h+1)$-tuple of integer partitions $(\lambda^0, \dots, \lambda^h)$ such that $\lambda^{j-1} \succeq \lambda^j$ (resp. $\lambda^{j-1} \preceq \lambda^j$) if $\delta_j =-1$ (resp. if $\delta_j=1$).  We define the {\it size} as
$$
|\lambda|:=\sum_{j=0}^{h} |\lambda^j|.
$$
Let $\mathcal{DSPP}_{\delta}$ be the set of skew double shifted plane partitions of profile $\delta$, including the ``empty partition''.
\end{definition}

\begin{figure}[h!]
\centering
    \begin{tikzpicture}[scale=.6, domain=0:2,every node/.style={minimum size=.60cm-\pgflinewidth, outer sep=0pt}]
\draw (4,0) grid (5,-1);
\draw (2,-1) grid (6,-2);
\draw (0,-2) grid (7,-3);
\draw (1,-3) grid (7,-4);
\draw (2,-4) grid (6,-5);
\draw (3,-5) grid (5,-6);
\node at (4.5,-0.5) {{\Large 7}};
\node at (2.5,-1.5) {{\Large 7}};
\node at (3.5,-1.5) {{\Large 6}};
\node at (4.5,-1.5) {{\Large 4}};
\node at (5.5,-1.5) {{\Large 4}};
\node at (0.5,-2.5) {{\Large 6}};
\node at (1.5,-2.5) {{\Large 5}};
\node at (2.5,-2.5) {{\Large 5}};
\node at (3.5,-2.5) {{\Large 3}};
\node at (4.5,-2.5) {{\Large 2}};
\node at (5.5,-2.5) {{\Large 1}};
\node at (1.5,-3.5) {{\Large 4}};
\node at (2.5,-3.5) {{\Large 3}};
\node at (3.5,-3.5) {{\Large 1}};
\node at (2.5,-4.5) {{\Large 2}};
\node at (3.5,-4.5) {{\Large 1}};
\draw [-, line width=1mm, color=purple] (0,-2) -- (2,-2) -- (2,-1) -- (4,-1) -- (4,0);
\end{tikzpicture}

  \caption{A DSPP of width $6,$ profile
  $(-1,-1,1,-1,-1,1)$ and size 61.}
  \label{F:dspp}
  
  \end{figure}
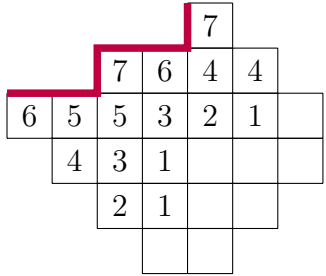 

Han and Xiong \cite{HX} proved a general lemma in the theory of symmetric functions and used it to prove product formulas analogous to Borodin's for both symmetric cylindric partitions  and DSPPs.  It follows directly from their work that generating functions are products even with a general weighted size.  For a DSPP $\lambda$ of width $h$ and a vector ${\bf a}=(a_0, \dots, a_h) \in \mathbb{R}_{\geq 0}^{h+1}$, we define 
$$
|\lambda|_{{\bf a}}:=\sum_{j=0}^h a_j|\lambda^j|.
$$
Similarly, for a cylindric partition $\lambda$ of width $h$ and a vector ${\bf a}=(a_0, \dots, a_{h-1}) \in \mathbb{R}_{\geq 0}^h$, we define
$$
|\lambda|_{{\bf a}}:=\sum_{j=0}^{h-1}a_j|\lambda^{j}|.
$$
Throughout, let $A_k:=\sum_{j=0}^{k-1} a_j.$  We say that ${\bf a}$ is the {\it standard weight} when ${\bf a}=(1,1, \dots,1)$.
\begin{proposition}\label{P:CPgeneralproduct}
Let $\delta=(\delta_1, \dots, \delta_h)$ be a profile of width $h$, and let ${\bf a}=(a_0, \dots, a_{h-1}) \in \mathbb{R}_{\geq 0}^h$.  Define the set
\begin{align*}
    W^{{\bf a}}_3(\delta)&:=\{A_h\} \cup \{A_j-A_i : 1 \leq i < j \leq h, \delta_i < \delta_j\} \cup \{A_h-(A_j-A_i):1 \leq i < j \leq h, \delta_i > \delta_j\}.
\end{align*}
If $0 \notin W_3^{{\bf a}}(\delta)$, then
\begin{equation}\label{E:Borodingeneral}
\mathrm{CP}_{\delta}^{{\bf a}}(q):=\sum_{\lambda \in \mathcal{ CP}_{\delta}} q^{|\lambda|_{{\bf a}}} = \prod_{k \in W^{{\bf a}}_3(\delta)}\frac{1}{(q^{k};q^{A_h})_{\infty}}.
\end{equation}
\end{proposition}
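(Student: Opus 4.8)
The plan is to deduce Proposition \ref{P:CPgeneralproduct} from Borodin's original result (Proposition \ref{P:borodin}) by a substitution/specialization argument, using the fact that Han and Xiong's general symmetric-function lemma already yields product formulas for weighted DSPPs, and that cylindric partitions are obtained from DSPPs by imposing $\lambda^0 = \lambda^h$. Concretely, first I would recall from \cite{HX} the weighted product formula for $\mathcal{DSPP}_\delta$: for a width-$h$ profile and weight vector ${\bf a} = (a_0, \dots, a_h)$, the generating function $\sum_{\lambda \in \mathcal{DSPP}_\delta} q^{|\lambda|_{\bf a}}$ factors as an explicit product whose exponents are built from the partial sums $A_k$. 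The passage to cylindric partitions amounts to a diagonal specialization that identifies the first and last partition; on the level of the Han--Xiong determinant/Schur-function identity this is the step that turns a "skew" product into a genuinely periodic one, and it is exactly the mechanism by which Borodin's formula \eqref{E:Borodin} arises from the DSPP formula in the standard-weight case.

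The key steps, in order, are: (1) State precisely the weighted DSPP product formula from \cite{HX}, checking that the non-degeneracy hypothesis $0 \notin W_3^{\bf a}(\delta)$ is what guarantees all the Pochhammer factors are well-defined (no $(q^0;q^{A_h})_\infty$ in the denominator) and that the geometric series manipulations converge for $|q|<1$. (2) Observe that the weight on a cylindric partition uses only $a_0, \dots, a_{h-1}$ because the part $\lambda^h$ is identified with $\lambda^0$; set up the correspondence so that a cylindric partition of profile $\delta$ contributes $q^{|\lambda|_{\bf a}}$ with $A_h = \sum_{j=0}^{h-1} a_j$ playing the role of the period. (3) Run the specialization: either directly quote the specialized case of Han--Xiong's Theorem that gives $\mathrm{CP}_\delta^{\bf a}(q)$, or reprove it by substituting $q^{a_j}$-powers into Borodin's formula — i.e. interpret $\mathrm{CP}_\delta^{\bf a}(q)$ as a "stretched" version of $\mathrm{CP}_\delta$ where the $j$-th diagonal is scaled by $a_j$. (4) Verify that the resulting exponent multiset matches $W_3^{\bf a}(\delta)$ as defined, in particular that the two families $\{A_j - A_i : \delta_i < \delta_j\}$ and $\{A_h - (A_j - A_i) : \delta_i > \delta_j\}$ correctly generalize the $\delta_i > \delta_j$ / $\delta_i < \delta_j$ dichotomy in $W_3(\delta)$ — note the inequalities are swapped relative to Borodin's $W_3(\delta)$ because of how the partial sums $A_j - A_i$ encode "distance along the profile" versus "distance from the end."

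The main obstacle I anticipate is step (4): matching conventions. Borodin's $W_3(\delta)$ is phrased in terms of the integer gaps $j - i$, whereas the weighted version replaces these by the real partial-sum differences $A_j - A_i$, and simultaneously flips the roles of $\delta_i > \delta_j$ and $\delta_i < \delta_j$. One must check that under the standard weight ${\bf a} = (1, \dots, 1)$ we have $A_j - A_i = j - i$ and the new $W_3^{\bf a}(\delta)$ reduces exactly to $W_3(\delta)$ — which requires either that the two definitions genuinely agree termwise, or that the multiset $\{A_j - A_i : \delta_i < \delta_j\} \cup \{A_h - (A_j - A_i) : \delta_i > \delta_j\}$ equals $\{j-i : \delta_i > \delta_j\} \cup \{h - (j-i) : \delta_i < \delta_j\}$ after the substitution, using the symmetry remark following Proposition \ref{P:borodin}. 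Once this bookkeeping is pinned down, the rest is a direct appeal to \cite{HX} together with the observation that the $0 \notin W_3^{\bf a}(\delta)$ hypothesis is exactly the condition under which the Han--Xiong product is a legitimate formal power series; I would present this verification carefully and relegate the remaining routine Pochhammer manipulations to a brief remark.
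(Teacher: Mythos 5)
The viable branch of your step (3) --- ``directly quote the specialized case of Han--Xiong's theorem'' --- is in fact the paper's entire proof: \cite[Theorem 3.1]{HX} is a multivariate product formula for cylindric partitions in which each diagonal $\lambda^i$ carries its own variable $u_i$, and setting $u_i:=q^{a_i}$ gives \eqref{E:Borodingeneral} immediately. Your step (4) concern about conventions is legitimate but harmless: under the standard weight, $W^{{\bf a}}_3(\delta)$ is the reflection $k\mapsto h-k$ of $W_3(\delta)$ away from the element $h$, and the symmetry noted after Proposition \ref{P:borodin} shows the two multisets coincide, so the products agree. Your reading of the hypothesis $0\notin W_3^{{\bf a}}(\delta)$ (no factor $(q^0;q^{A_h})_\infty$, hence a well-defined series) is also correct.

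However, the two concrete mechanisms you propose to carry the argument would not work. First, the cylindric product cannot be obtained from the weighted DSPP product of Proposition \ref{P:DSPPproductgeneral} by ``imposing $\lambda^0=\lambda^h$'': once the diagonal sizes have been summed into a generating function, that constraint is not expressible by any specialization of variables, and the two products are structurally different (modulus $A_h$ versus moduli $A_{h+1}$ and $2A_{h+1}$). In \cite{HX}, as in \cite{B}, the periodic case is a separate trace-type computation --- one sums the skew identity over the boundary partition --- not a specialization of the skew case, so this route amounts to reproving Han--Xiong's Theorem 3.1, for which your outline gives no details. Second, ``substituting $q^{a_j}$-powers into Borodin's formula \eqref{E:Borodin}'' is not available either: the one-variable product \eqref{E:Borodin} has already forgotten the individual sizes $|\lambda^j|$, and no substitution $q\mapsto q^c$ recovers $\sum_{\lambda} q^{\sum_j a_j|\lambda^j|}$ unless all the $a_j$ are equal. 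What must be specialized is the multivariate refinement (Borodin's result with diagonal parameters, i.e. \cite[Theorem 3.1]{HX} with the $u_i$); once you invoke that and set $u_i=q^{a_i}$, you have the paper's proof, and the DSPP detour and the stretching argument are unnecessary.
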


The proof of Proposition \ref{P:CPgeneralproduct} directly follows from  \cite[Theorem 3.1]{HX} by setting $u_i:=q^{a_i}.$ In the same spirit, we get the following.

\begin{proposition}\label{P:DSPPproductgeneral}
Let $\delta=(\delta_1, \dots, \delta_h)$ be a profile of width $h$, and let ${\bf a}=(a_0, \dots, a_{h}) \in \mathbb{R}_{\geq 0}^{h+1}$.  Define the sets
\begin{align*}
W^{{\bf a}}_1(\delta):=&\{A_{h+1}\} \cup \{A_i : \delta_i=-1\} \cup \{A_{h+1}-A_i : \delta_i=1\}, \\
W^{{\bf a}}_2(\delta):=&\{A_i+A_j : 1 \leq i < j \leq h, \ \delta_i=\delta_j=-1\} \\
&\cup \{2A_{h+1}-A_i-A_j : 1 \leq i < j \leq h, \ \delta_i=\delta_j=1\} \\
& \cup \{2A_{h+1}-(A_j-A_i) : 1 \leq i < j \leq h, \ \delta_i<\delta_j\} \\
& \cup \{A_j-A_i : 1 \leq i < j \leq h, \ \delta_i>\delta_j\}.
\end{align*}
If $0 \notin W_1^{{\bf a}}(\delta) \cup W_2^{{\bf a}}(\delta)$, then
\begin{equation}\label{E:DSPPproductgeneral}
\mathrm{DSPP}_{\delta}^{{\bf a}}(q):=\sum_{\lambda \in \mathcal{DSPP}_{\delta}} q^{|\lambda|_{{\bf a}}} = \prod_{\substack{k \in W^{{\bf a}}_1(\delta) \\ \ell \in W^{{\bf a}}_2(\delta)}} \frac{1}{(q^{k};q^{A_{h+1}})_{\infty}(q^{\ell};q^{2A_{h+1}})_{\infty}}.
\end{equation}
\end{proposition}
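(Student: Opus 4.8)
The plan is to obtain Proposition~\ref{P:DSPPproductgeneral} as a specialization of Han and Xiong's multivariate product formula for skew double shifted plane partitions, exactly as Proposition~\ref{P:CPgeneralproduct} is obtained from \cite[Theorem~3.1]{HX} via the substitution $u_i:=q^{a_i}$. The relevant input from \cite{HX} is a closed form, valid for a profile $\delta$ of width $h$ and indeterminates $u_0,\dots,u_h$, for the fully weighted generating function
\[
\sum_{\lambda \in \mathcal{DSPP}_\delta}\ \prod_{j=0}^h u_j^{|\lambda^j|},
\]
written as an infinite product whose factors are reciprocals of $\bigl(x;U\bigr)_\infty$ and $\bigl(y;U^2\bigr)_\infty$, where $U=u_0u_1\cdots u_h$; the arguments $x$ are the single-index monomials $u_0\cdots u_{i-1}$ (when $\delta_i=-1$), their complements $U/(u_0\cdots u_{i-1})$ (when $\delta_i=1$) and $U$ itself, while the arguments $y$ are two-index monomials $u_i\cdots u_{j-1}$, $U^2/(u_iu_{i+1}\cdots)$, and so on, whose precise shape is dictated by the relative order of $\delta_i$ and $\delta_j$. (The two bases $U$ and $U^2$ are characteristic of shifted-plane-partition geometry: ``diagonal'' contributions carry base $U$, symmetric off-diagonal pairs base $U^2$.) First I would write this statement out precisely in the normalization matching our definitions; note that for DSPPs the size $\sum_{j=0}^h|\lambda^j|$ is already the standard weight, so---unlike for symmetric cylindric partitions---there is no size-convention discrepancy to reconcile.

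Next I would substitute $u_i:=q^{a_i}$ for $0\le i\le h$. On the sum side this turns $\prod_j u_j^{|\lambda^j|}$ into $q^{\sum_j a_j|\lambda^j|}=q^{|\lambda|_{\bf a}}$, that is, into $\mathrm{DSPP}_\delta^{\bf a}(q)$. On the product side, $U$ becomes $q^{a_0+\cdots+a_h}=q^{A_{h+1}}$ and $U^2$ becomes $q^{2A_{h+1}}$; a partial product $u_i\cdots u_{j-1}$ becomes $q^{a_i+\cdots+a_{j-1}}=q^{A_j-A_i}$, a single-index product $u_0\cdots u_{i-1}$ becomes $q^{A_i}$, and the various complements become $q^{A_{h+1}-A_i}$, $q^{A_{h+1}-(A_j-A_i)}$ and $q^{2A_{h+1}-(A_j-A_i)}$, $q^{2A_{h+1}-A_i-A_j}$. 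Collecting exponents, the single-index factors assemble into $\prod_{k\in W_1^{\bf a}(\delta)}1/(q^k;q^{A_{h+1}})_\infty$ and the two-index factors into $\prod_{\ell\in W_2^{\bf a}(\delta)}1/(q^\ell;q^{2A_{h+1}})_\infty$, which is \eqref{E:DSPPproductgeneral}.

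The step I expect to take the most work is the bookkeeping that matches, term by term, the four cases in $W_2^{\bf a}(\delta)$---namely $\delta_i=\delta_j=-1$, $\delta_i=\delta_j=1$, $\delta_i<\delta_j$, and $\delta_i>\delta_j$---with the pair-contributions in Han and Xiong's formula, and in particular checking which of the two available ``directions'' ($A_j-A_i$ versus its complement relative to $A_{h+1}$ or to $2A_{h+1}$) each case selects; this is routine but must be carried out against \cite{HX}'s exact indexing conventions. Finally I would observe that the hypothesis $0\notin W_1^{\bf a}(\delta)\cup W_2^{\bf a}(\delta)$ is precisely what makes each factor $1/(q^k;q^{A_{h+1}})_\infty$ or $1/(q^\ell;q^{2A_{h+1}})_\infty$ a well-defined formal power series (no $1-q^0$ in a denominator), which is also what legitimizes specializing Han and Xiong's identity of formal power series factor by factor; in the degenerate range where some $a_i=0$ one checks in addition that the substitution $u_i=1$ remains inside the domain of validity of \cite{HX}.
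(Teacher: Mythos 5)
Your proposal matches the paper's proof: the paper likewise obtains this proposition directly from Han and Xiong's multivariate product formula in \cite{HX} (in the same spirit as Proposition~\ref{P:CPgeneralproduct} from \cite[Theorem 3.1]{HX}) via the specialization $u_i:=q^{a_i}$, with the condition $0 \notin W_1^{{\bf a}}(\delta) \cup W_2^{{\bf a}}(\delta)$ ensuring the resulting products are well defined. Your additional bookkeeping remarks are just the routine verification implicit in the paper's one-line derivation, so the approaches are essentially identical.
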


Note that for the width 0 and 1 profiles these generating functions with standard weights equals the partition function \[\mathrm{DSPP}_{\emptyset}^{(1)}(q)=\mathrm{DSPP}_{(1)}^{(1,1)}(q)=\frac{1}{(q;q)_{\infty}}.\] This corresponence can easily be seen through the diagrams of the counted objects. 

\begin{remark}
In contrast to the generating functions for cylindric partitions, neither of the products \eqref{E:Borodingeneral} or \eqref{E:DSPPproductgeneral} are symmetric.
\end{remark}

\begin{remark}\label{R:differentproducts}
We may now write {\it any} product of the following form as a weighted cylindric partition generating function; precisely, for any real numbers  $0 < b_1 \leq  b_2 \leq \dots \leq b_{r+1}$, we have
$$
\mathrm{CP}_{(-1,-1,\dots, -1,1)}^{(b_1, b_2-b_1, \dots, b_{r+1}-b_r)}(q)=\frac{1}{(q^{b_1},q^{b_2},\dots ,q^{b_r},q^{b_{r+1}};q^{b_{r+1}})_{\infty}}.
$$
For example, this approach provides a combinatorial interpretation for a specilization of the reciprocal of the Ramanujan theta function  \cite[(1.59). p.37]{Sills} \[\sum_{n=-\infty}^\infty a^{n+1\choose 2}b^{n\choose 2} = (-a,-b,ab;ab)_\infty,\] as a generating function for the number of weighted cylindric partitions with profile $(1,1,-1)$. Precisely, let $b_2>b_1>0$ then \[\mathrm{CP}_{(-1,-1,1)}^{( b_1 b_2-b_1,b_1)}(q)=\frac{1}{(q^{b_1},q^{b_2},q^{b_1+b_2};q^{b_1+b_2})_{\infty}} = \left(\sum_{n=-\infty}^\infty (-1)^n q^{b_1{n+1\choose 2} + b_2{n\choose 2}} \right)^{-1}.\]

The profile $(-1,-1, \dots,-1,-1, 1)$ of width $h$ has rank $h-1$ and does not lead to interesting recurrences and sum-sides in general; however, there are often multiple combinations of weights and profiles with higher rank that yield the same product. In general, by picking different profiles and weights we can discover connections between weighted counts of different classes of cylindric partitions. To demonstrate, from Proposition~\ref{P:CPgeneralproduct}, we can easily confirm that
\begin{align}\label{E:CPs1}\mathrm{CP}_{(-1,-1,1)}^{(1,3,1)}(q) &= \mathrm{CP}_{(-1,-1,1,1)}^{(1,3,1,5)}(q)=\mathrm{CP}_{(-1,1,-1,1,1)}^{(1,4,4,1,5)}(q)=\mathrm{CP}_{(-1,1,-1,1,-1)}^{(5,4,1,1,4)}(q)=  \frac{(q^2,q^3;q^5)}{(q;q)_\infty},\\
\label{E:CPs2}\mathrm{CP}_{(-1,1,1)}^{(2,2,1)}(q) &= \mathrm{CP}_{(-1,1,-1,1)}^{(2,2,3,3)}(q)=\mathrm{CP}_{(-1,1,-1,1,1)}^{(2,3,3,2,5)}(q)=\mathrm{CP}_{(-1,1,-1,1,-1)}^{(5,3,2,2,3)}(q)=  \frac{(q,q^4;q^5)}{(q;q)_\infty}.
\end{align}

Similar to finding weighted correspondences between cylindric partitions with different profiles, we can also find weighted correspondences between skew double shifted plane partitions with different profiles. Furthermore, we can find weighted relations between CPs and DSPPs. 

For example, Propositions~\ref{P:CPgeneralproduct} and \ref{P:DSPPproductgeneral} is enough to see that \begin{equation}
    \label{E:CP_DSPP_weighted_example} \mathrm{CP}^{(1,1,0,0)}_{(-1,1,1,1)}(q) = \mathrm{DSPP}^{(0,1,0)}_{(1,-1)}(q) = \frac{1}{(q;q)^3_\infty (q;q^2)_\infty}.
\end{equation}

To give an explicit example, we note that the coefficient of the $q^3$ term of the $q$-series of \eqref{E:CP_DSPP_weighted_example} is 36. We explicitly present these weighted CPs and DSPPs in Figure~\ref{F:CP_DSPP_figure}, where the yellow diagonals for the cylindric partitions highlight the same diagonal's repetition due to cylindricity (only one of the diagonals contribute to the total size) and the gray diagonals are weighted with 0 (the numbers on these diagonals do not contribute to the total size of the object). The total value in the boxes with the white  backgrounds (each counted with weight 1 in this case) makes the size of these objects. There are some explicitly stated 0 parts. On top of that every empty box in these diagrams can be thought to have 0s inside and they add nothing to the total size. Finally, we compress multiple objects with braces of a range of possible numbers to avoid repetition the presentation.

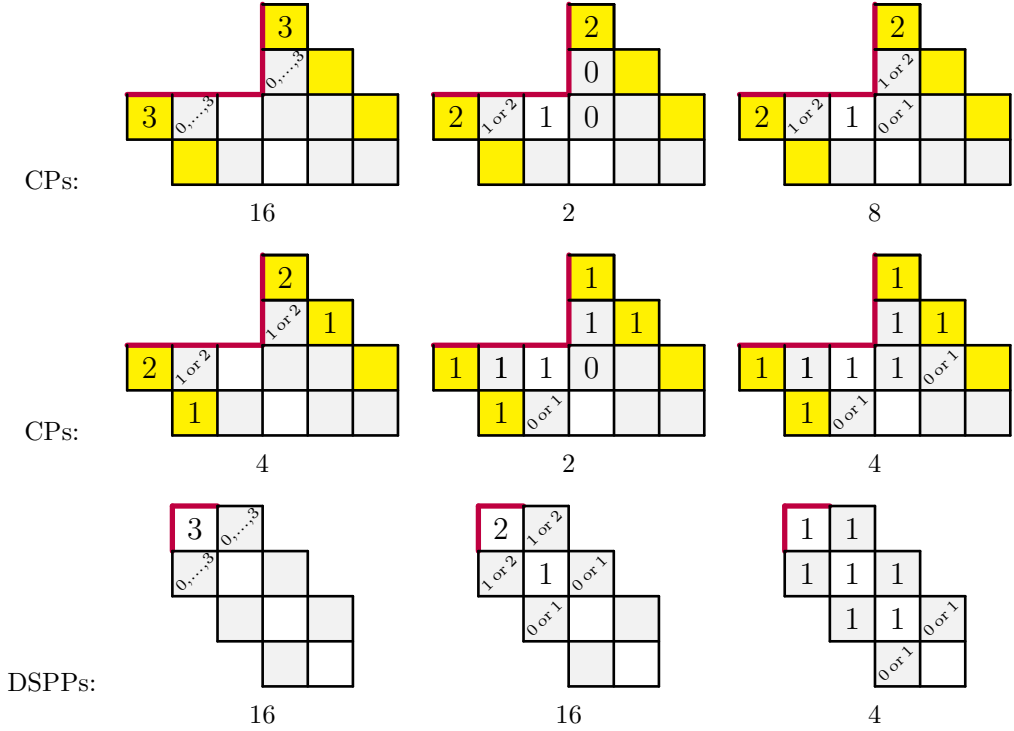
\begin{figure}[ht!]
\begin{tabular}{cccc}
CPs:&\begin{tikzpicture}[scale=0.6,line cap=round,line join=round,x=1cm,y=1cm]
\clip(1.9,1.9) rectangle (8.1,6.1);
\fill[line width=0pt,color=yellow,fill=yellow,fill opacity=1] (5,6) -- (6,6) -- (6,5) -- (5,5) -- cycle;
\fill[line width=2pt,color=yellow,fill=yellow,fill opacity=1] (6,5) -- (7,5) -- (7,4) -- (6,4) -- cycle;
\fill[line width=2pt,color=yellow,fill=yellow,fill opacity=1] (7,4) -- (8,4) -- (8,3) -- (7,3) -- cycle;
\fill[line width=2pt,color=yellow,fill=yellow,fill opacity=1] (2,4) -- (2,3) -- (3,3) -- (3,4) -- cycle;
\fill[line width=2pt,color=yellow,fill=yellow,fill opacity=1] (3,3) -- (4,3) -- (4,2) -- (3,2) -- cycle;
\fill[line width=2pt,color=gray,fill=gray,fill opacity=0.1] (5,5) -- (5,3) -- (6,3) -- (6,2) -- (8,2) -- (8,3) -- (7,3) -- (7,4) -- (6,4) -- (6,5) -- cycle;
\fill[line width=2pt,color=gray,fill=gray,fill opacity=0.1] (3,4) -- (4,4) -- (4,3) -- (3,3) -- cycle;
\fill[line width=2pt,color=gray,fill=gray,fill opacity=0.1] (4,3) -- (5,3) -- (5,2) -- (4,2) -- cycle;
\draw [line width=2pt,color=purple] (2,4)-- (5,4);
\draw [line width=2pt,color=purple] (5,4)-- (5,6);
\draw [line width=1pt] (2,3)-- (2,4);
\draw [line width=1pt] (5,6)-- (6,6);
\draw [line width=1pt] (6,6)-- (6,2);
\draw [line width=1pt] (5,5)-- (7,5);
\draw [line width=1pt] (5,4)-- (8,4);
\draw [line width=1pt] (7,5)-- (7,2);
\draw [line width=1pt] (2,3)-- (8,3);
\draw [line width=1pt] (8,4)-- (8,2);
\draw [line width=1pt] (5,4)-- (5,2);
\draw [line width=1pt] (4,4)-- (4,2);
\draw [line width=1pt] (3,4)-- (3,2);
\draw [line width=1pt] (5,6)-- (6,6);
\draw [line width=1pt] (6,6)-- (6,5);
\draw [line width=1pt] (6,5)-- (5,5);
\draw [line width=1pt] (3,2)-- (8,2);
\node at (2.5,3.5) {{\Large 3}};
\node at (5.5,5.5) {{\Large 3}};
\node at (3.2,3.1) {{\begin{rotate}{45}\tiny 0,...,3 \end{rotate}}};
\node at (5.2,4.1) {{\begin{rotate}{45}\tiny 0,...,3 \end{rotate}}};
\end{tikzpicture}
&
\begin{tikzpicture}[scale=0.6,line cap=round,line join=round,x=1cm,y=1cm]
\clip(1.9,1.9) rectangle (8.1,6.1);
\fill[line width=0pt,color=yellow,fill=yellow,fill opacity=1] (5,6) -- (6,6) -- (6,5) -- (5,5) -- cycle;
\fill[line width=2pt,color=yellow,fill=yellow,fill opacity=1] (6,5) -- (7,5) -- (7,4) -- (6,4) -- cycle;
\fill[line width=2pt,color=yellow,fill=yellow,fill opacity=1] (7,4) -- (8,4) -- (8,3) -- (7,3) -- cycle;
\fill[line width=2pt,color=yellow,fill=yellow,fill opacity=1] (2,4) -- (2,3) -- (3,3) -- (3,4) -- cycle;
\fill[line width=2pt,color=yellow,fill=yellow,fill opacity=1] (3,3) -- (4,3) -- (4,2) -- (3,2) -- cycle;
\fill[line width=2pt,color=gray,fill=gray,fill opacity=0.1] (5,5) -- (5,3) -- (6,3) -- (6,2) -- (8,2) -- (8,3) -- (7,3) -- (7,4) -- (6,4) -- (6,5) -- cycle;
\fill[line width=2pt,color=gray,fill=gray,fill opacity=0.1] (3,4) -- (4,4) -- (4,3) -- (3,3) -- cycle;
\fill[line width=2pt,color=gray,fill=gray,fill opacity=0.1] (4,3) -- (5,3) -- (5,2) -- (4,2) -- cycle;
\draw [line width=2pt,color=purple] (2,4)-- (5,4);
\draw [line width=2pt,color=purple] (5,4)-- (5,6);
\draw [line width=1pt] (2,3)-- (2,4);
\draw [line width=1pt] (5,6)-- (6,6);
\draw [line width=1pt] (6,6)-- (6,2);
\draw [line width=1pt] (5,5)-- (7,5);
\draw [line width=1pt] (5,4)-- (8,4);
\draw [line width=1pt] (7,5)-- (7,2);
\draw [line width=1pt] (2,3)-- (8,3);
\draw [line width=1pt] (8,4)-- (8,2);
\draw [line width=1pt] (5,4)-- (5,2);
\draw [line width=1pt] (4,4)-- (4,2);
\draw [line width=1pt] (3,4)-- (3,2);
\draw [line width=1pt] (5,6)-- (6,6);
\draw [line width=1pt] (6,6)-- (6,5);
\draw [line width=1pt] (6,5)-- (5,5);
\draw [line width=1pt] (3,2)-- (8,2);
\node at (2.5,3.5) {{\Large 2}};
\node at (5.5,5.5) {{\Large 2}};
\node at (3.2,3.1) {{\begin{rotate}{45}\tiny 1\,\text{or}\,2 \end{rotate}}};
\node at (4.5,3.5) {{\Large 1}};
\node at (5.5,4.5) {{\Large 0}};
\node at (5.5,3.5) {{\Large 0}};
\end{tikzpicture}
&
\begin{tikzpicture}[scale=0.6,line cap=round,line join=round,x=1cm,y=1cm]
\clip(1.9,1.9) rectangle (8.1,6.1);
\fill[line width=0pt,color=yellow,fill=yellow,fill opacity=1] (5,6) -- (6,6) -- (6,5) -- (5,5) -- cycle;
\fill[line width=2pt,color=yellow,fill=yellow,fill opacity=1] (6,5) -- (7,5) -- (7,4) -- (6,4) -- cycle;
\fill[line width=2pt,color=yellow,fill=yellow,fill opacity=1] (7,4) -- (8,4) -- (8,3) -- (7,3) -- cycle;
\fill[line width=2pt,color=yellow,fill=yellow,fill opacity=1] (2,4) -- (2,3) -- (3,3) -- (3,4) -- cycle;
\fill[line width=2pt,color=yellow,fill=yellow,fill opacity=1] (3,3) -- (4,3) -- (4,2) -- (3,2) -- cycle;
\fill[line width=2pt,color=gray,fill=gray,fill opacity=0.1] (5,5) -- (5,3) -- (6,3) -- (6,2) -- (8,2) -- (8,3) -- (7,3) -- (7,4) -- (6,4) -- (6,5) -- cycle;
\fill[line width=2pt,color=gray,fill=gray,fill opacity=0.1] (3,4) -- (4,4) -- (4,3) -- (3,3) -- cycle;
\fill[line width=2pt,color=gray,fill=gray,fill opacity=0.1] (4,3) -- (5,3) -- (5,2) -- (4,2) -- cycle;
\draw [line width=2pt,color=purple] (2,4)-- (5,4);
\draw [line width=2pt,color=purple] (5,4)-- (5,6);
\draw [line width=1pt] (2,3)-- (2,4);
\draw [line width=1pt] (5,6)-- (6,6);
\draw [line width=1pt] (6,6)-- (6,2);
\draw [line width=1pt] (5,5)-- (7,5);
\draw [line width=1pt] (5,4)-- (8,4);
\draw [line width=1pt] (7,5)-- (7,2);
\draw [line width=1pt] (2,3)-- (8,3);
\draw [line width=1pt] (8,4)-- (8,2);
\draw [line width=1pt] (5,4)-- (5,2);
\draw [line width=1pt] (4,4)-- (4,2);
\draw [line width=1pt] (3,4)-- (3,2);
\draw [line width=1pt] (5,6)-- (6,6);
\draw [line width=1pt] (6,6)-- (6,5);
\draw [line width=1pt] (6,5)-- (5,5);
\draw [line width=1pt] (3,2)-- (8,2);
\node at (2.5,3.5) {{\Large 2}};
\node at (5.5,5.5) {{\Large 2}};
\node at (3.2,3.1) {{\begin{rotate}{45}\tiny 1\,\text{or}\,2 \end{rotate}}};
\node at (4.5,3.5) {{\Large 1}};
\node at (5.2,4.1) {{\begin{rotate}{45}\tiny 1\,\text{or}\,2 \end{rotate}}};
\node at (5.2,3.1) {{\begin{rotate}{45}\tiny 0\,\text{or}\,1 \end{rotate}}};
\end{tikzpicture}
\\
&16 & 2 & 8\\ [-1ex]\\
CPs: &\begin{tikzpicture}[scale=0.6,line cap=round,line join=round,x=1cm,y=1cm]
\clip(1.9,1.9) rectangle (8.1,6.1);
\fill[line width=0pt,color=yellow,fill=yellow,fill opacity=1] (5,6) -- (6,6) -- (6,5) -- (5,5) -- cycle;
\fill[line width=2pt,color=yellow,fill=yellow,fill opacity=1] (6,5) -- (7,5) -- (7,4) -- (6,4) -- cycle;
\fill[line width=2pt,color=yellow,fill=yellow,fill opacity=1] (7,4) -- (8,4) -- (8,3) -- (7,3) -- cycle;
\fill[line width=2pt,color=yellow,fill=yellow,fill opacity=1] (2,4) -- (2,3) -- (3,3) -- (3,4) -- cycle;
\fill[line width=2pt,color=yellow,fill=yellow,fill opacity=1] (3,3) -- (4,3) -- (4,2) -- (3,2) -- cycle;
\fill[line width=2pt,color=gray,fill=gray,fill opacity=0.1] (5,5) -- (5,3) -- (6,3) -- (6,2) -- (8,2) -- (8,3) -- (7,3) -- (7,4) -- (6,4) -- (6,5) -- cycle;
\fill[line width=2pt,color=gray,fill=gray,fill opacity=0.1] (3,4) -- (4,4) -- (4,3) -- (3,3) -- cycle;
\fill[line width=2pt,color=gray,fill=gray,fill opacity=0.1] (4,3) -- (5,3) -- (5,2) -- (4,2) -- cycle;
\draw [line width=2pt,color=purple] (2,4)-- (5,4);
\draw [line width=2pt,color=purple] (5,4)-- (5,6);
\draw [line width=1pt] (2,3)-- (2,4);
\draw [line width=1pt] (5,6)-- (6,6);
\draw [line width=1pt] (6,6)-- (6,2);
\draw [line width=1pt] (5,5)-- (7,5);
\draw [line width=1pt] (5,4)-- (8,4);
\draw [line width=1pt] (7,5)-- (7,2);
\draw [line width=1pt] (2,3)-- (8,3);
\draw [line width=1pt] (8,4)-- (8,2);
\draw [line width=1pt] (5,4)-- (5,2);
\draw [line width=1pt] (4,4)-- (4,2);
\draw [line width=1pt] (3,4)-- (3,2);
\draw [line width=1pt] (5,6)-- (6,6);
\draw [line width=1pt] (6,6)-- (6,5);
\draw [line width=1pt] (6,5)-- (5,5);
\draw [line width=1pt] (3,2)-- (8,2);
\node at (2.5,3.5) {{\Large 2}};
\node at (3.5,2.5) {{\Large 1}};
\node at (5.5,5.5) {{\Large 2}};
\node at (6.5,4.5) {{\Large 1}};
\node at (3.2,3.1) {{\begin{rotate}{45}\tiny 1\,\text{or}\,2 \end{rotate}}};
\node at (5.2,4.1) {{\begin{rotate}{45}\tiny 1\,\text{or}\,2 \end{rotate}}};
\end{tikzpicture}
&
\begin{tikzpicture}[scale=0.6,line cap=round,line join=round,x=1cm,y=1cm]
\clip(1.9,1.9) rectangle (8.1,6.1);
\fill[line width=0pt,color=yellow,fill=yellow,fill opacity=1] (5,6) -- (6,6) -- (6,5) -- (5,5) -- cycle;
\fill[line width=2pt,color=yellow,fill=yellow,fill opacity=1] (6,5) -- (7,5) -- (7,4) -- (6,4) -- cycle;
\fill[line width=2pt,color=yellow,fill=yellow,fill opacity=1] (7,4) -- (8,4) -- (8,3) -- (7,3) -- cycle;
\fill[line width=2pt,color=yellow,fill=yellow,fill opacity=1] (2,4) -- (2,3) -- (3,3) -- (3,4) -- cycle;
\fill[line width=2pt,color=yellow,fill=yellow,fill opacity=1] (3,3) -- (4,3) -- (4,2) -- (3,2) -- cycle;
\fill[line width=2pt,color=gray,fill=gray,fill opacity=0.1] (5,5) -- (5,3) -- (6,3) -- (6,2) -- (8,2) -- (8,3) -- (7,3) -- (7,4) -- (6,4) -- (6,5) -- cycle;
\fill[line width=2pt,color=gray,fill=gray,fill opacity=0.1] (3,4) -- (4,4) -- (4,3) -- (3,3) -- cycle;
\fill[line width=2pt,color=gray,fill=gray,fill opacity=0.1] (4,3) -- (5,3) -- (5,2) -- (4,2) -- cycle;
\draw [line width=2pt,color=purple] (2,4)-- (5,4);
\draw [line width=2pt,color=purple] (5,4)-- (5,6);
\draw [line width=1pt] (2,3)-- (2,4);
\draw [line width=1pt] (5,6)-- (6,6);
\draw [line width=1pt] (6,6)-- (6,2);
\draw [line width=1pt] (5,5)-- (7,5);
\draw [line width=1pt] (5,4)-- (8,4);
\draw [line width=1pt] (7,5)-- (7,2);
\draw [line width=1pt] (2,3)-- (8,3);
\draw [line width=1pt] (8,4)-- (8,2);
\draw [line width=1pt] (5,4)-- (5,2);
\draw [line width=1pt] (4,4)-- (4,2);
\draw [line width=1pt] (3,4)-- (3,2);
\draw [line width=1pt] (5,6)-- (6,6);
\draw [line width=1pt] (6,6)-- (6,5);
\draw [line width=1pt] (6,5)-- (5,5);
\draw [line width=1pt] (3,2)-- (8,2);
\node at (2.5,3.5) {{\Large 1}};
\node at (3.5,2.5) {{\Large 1}};
\node at (3.5,3.5) {{\Large 1}};
\node at (4.5,3.5) {{\Large 1}};
\node at (5.5,3.5) {{\Large 0}};
\node at (4.2,2.1)  {{\begin{rotate}{45}\tiny 0\,\text{or}\,1 \end{rotate}}};
\node at (5.5,5.5) {{\Large 1}};
\node at (6.5,4.5) {{\Large 1}};
\node at (3.5,3.5) {{\Large 1}};
\node at (5.5,4.5) {{\Large 1}};
\end{tikzpicture}
&
\begin{tikzpicture}[scale=0.6,line cap=round,line join=round,x=1cm,y=1cm]
\clip(1.9,1.9) rectangle (8.1,6.1);
\fill[line width=0pt,color=yellow,fill=yellow,fill opacity=1] (5,6) -- (6,6) -- (6,5) -- (5,5) -- cycle;
\fill[line width=2pt,color=yellow,fill=yellow,fill opacity=1] (6,5) -- (7,5) -- (7,4) -- (6,4) -- cycle;
\fill[line width=2pt,color=yellow,fill=yellow,fill opacity=1] (7,4) -- (8,4) -- (8,3) -- (7,3) -- cycle;
\fill[line width=2pt,color=yellow,fill=yellow,fill opacity=1] (2,4) -- (2,3) -- (3,3) -- (3,4) -- cycle;
\fill[line width=2pt,color=yellow,fill=yellow,fill opacity=1] (3,3) -- (4,3) -- (4,2) -- (3,2) -- cycle;
\fill[line width=2pt,color=gray,fill=gray,fill opacity=0.1] (5,5) -- (5,3) -- (6,3) -- (6,2) -- (8,2) -- (8,3) -- (7,3) -- (7,4) -- (6,4) -- (6,5) -- cycle;
\fill[line width=2pt,color=gray,fill=gray,fill opacity=0.1] (3,4) -- (4,4) -- (4,3) -- (3,3) -- cycle;
\fill[line width=2pt,color=gray,fill=gray,fill opacity=0.1] (4,3) -- (5,3) -- (5,2) -- (4,2) -- cycle;
\draw [line width=2pt,color=purple] (2,4)-- (5,4);
\draw [line width=2pt,color=purple] (5,4)-- (5,6);
\draw [line width=1pt] (2,3)-- (2,4);
\draw [line width=1pt] (5,6)-- (6,6);
\draw [line width=1pt] (6,6)-- (6,2);
\draw [line width=1pt] (5,5)-- (7,5);
\draw [line width=1pt] (5,4)-- (8,4);
\draw [line width=1pt] (7,5)-- (7,2);
\draw [line width=1pt] (2,3)-- (8,3);
\draw [line width=1pt] (8,4)-- (8,2);
\draw [line width=1pt] (5,4)-- (5,2);
\draw [line width=1pt] (4,4)-- (4,2);
\draw [line width=1pt] (3,4)-- (3,2);
\draw [line width=1pt] (5,6)-- (6,6);
\draw [line width=1pt] (6,6)-- (6,5);
\draw [line width=1pt] (6,5)-- (5,5);
\draw [line width=1pt] (3,2)-- (8,2);
\node at (2.5,3.5) {{\Large 1}};
\node at (3.5,2.5) {{\Large 1}};
\node at (3.5,3.5) {{\Large 1}};
\node at (4.5,3.5) {{\Large 1}};
\node at (5.5,3.5) {{\Large 1}};
\node at (6.2,3.1) {{\begin{rotate}{45}\tiny 0\,\text{or}\,1 \end{rotate}}};
\node at (4.2,2.1) {{\begin{rotate}{45}\tiny 0\,\text{or}\,1 \end{rotate}}};
\node at (5.5,5.5) {{\Large 1}};
\node at (6.5,4.5) {{\Large 1}};
\node at (3.5,3.5) {{\Large 1}};
\node at (5.5,4.5) {{\Large 1}};
\end{tikzpicture}\\
&4 & 2 & 4\\[-1ex]\\
DSPPs: &\begin{tikzpicture}[scale=0.6, line cap=round,line join=round,x=1cm,y=1cm]
\clip(1.9,0.9) rectangle (6.1,5.1);
\fill[line width=2pt,color=gray,fill=gray,fill opacity=0.1] (2,4) -- (3,4) -- (3,3) -- (2,3) -- cycle;
\fill[line width=2pt,color=gray,fill=gray,fill opacity=0.1] (3,3) -- (4,3) -- (4,2) -- (3,2) -- cycle;
\fill[line width=2pt,color=gray,fill=gray,fill opacity=0.1] (4,2) -- (5,2) -- (5,1) -- (4,1) -- cycle;
\fill[line width=2pt,color=gray,fill=gray,fill opacity=0.1] (3,5) -- (4,5) -- (4,4) -- (3,4) -- cycle;
\fill[line width=2pt,color=gray,fill=gray,fill opacity=0.1] (4,4) -- (5,4) -- (5,3) -- (4,3) -- cycle;
\fill[line width=2pt,color=gray,fill=gray,fill opacity=0.1] (6,2) -- (6,3) -- (5,3) -- (5,2) -- cycle;
\draw [line width=2pt,color=purple] (2,4)-- (2,5);
\draw [line width=2pt,color=purple] (2,5)-- (3,5);
\draw [line width=1pt] (3,5)-- (3,2);
\draw [line width=1pt] (3,2)-- (6,2);
\draw [line width=1pt] (5,1)-- (5,4);
\draw [line width=1pt] (5,4)-- (2,4);
\draw [line width=1pt] (4,5)-- (3,5);
\draw [line width=1pt] (4,5)-- (4,1);
\draw [line width=1pt] (6,3)-- (2,3);
\draw [line width=1pt] (2,3)-- (2,4);
\draw [line width=1pt] (4,1)-- (6,1);
\draw [line width=1pt] (6,1)-- (6,3);
\node at (2.5,4.5) {{\Large 3}};
\node at (2.2,3.1) {{\begin{rotate}{45}\tiny 0,...,3 \end{rotate}}};
\node at (3.2,4.1) {{\begin{rotate}{45}\tiny 0,...,3 \end{rotate}}};
\end{tikzpicture}
&
\begin{tikzpicture}[scale=0.6, line cap=round,line join=round,x=1cm,y=1cm]
\clip(1.9,0.9) rectangle (6.1,5.1);
\fill[line width=2pt,color=gray,fill=gray,fill opacity=0.1] (2,4) -- (3,4) -- (3,3) -- (2,3) -- cycle;
\fill[line width=2pt,color=gray,fill=gray,fill opacity=0.1] (3,3) -- (4,3) -- (4,2) -- (3,2) -- cycle;
\fill[line width=2pt,color=gray,fill=gray,fill opacity=0.1] (4,2) -- (5,2) -- (5,1) -- (4,1) -- cycle;
\fill[line width=2pt,color=gray,fill=gray,fill opacity=0.1] (3,5) -- (4,5) -- (4,4) -- (3,4) -- cycle;
\fill[line width=2pt,color=gray,fill=gray,fill opacity=0.1] (4,4) -- (5,4) -- (5,3) -- (4,3) -- cycle;
\fill[line width=2pt,color=gray,fill=gray,fill opacity=0.1] (6,2) -- (6,3) -- (5,3) -- (5,2) -- cycle;
\draw [line width=2pt,color=purple] (2,4)-- (2,5);
\draw [line width=2pt,color=purple] (2,5)-- (3,5);
\draw [line width=1pt] (3,5)-- (3,2);
\draw [line width=1pt] (3,2)-- (6,2);
\draw [line width=1pt] (5,1)-- (5,4);
\draw [line width=1pt] (5,4)-- (2,4);
\draw [line width=1pt] (4,5)-- (3,5);
\draw [line width=1pt] (4,5)-- (4,1);
\draw [line width=1pt] (6,3)-- (2,3);
\draw [line width=1pt] (2,3)-- (2,4);
\draw [line width=1pt] (4,1)-- (6,1);
\draw [line width=1pt] (6,1)-- (6,3);
\node at (2.5,4.5) {{\Large 2}};
\node at (2.2,3.1) {{\begin{rotate}{45}\tiny 1\,\text{or}\,2 \end{rotate}}};
\node at (3.2,4.1) {{\begin{rotate}{45}\tiny 1\,\text{or}\,2 \end{rotate}}};
\node at (3.5,3.5) {{\Large 1}};
\node at (3.2,2.1) {{\begin{rotate}{45}\tiny 0\,\text{or}\,1 \end{rotate}}};
\node at (4.2,3.1) {{\begin{rotate}{45}\tiny 0\,\text{or}\,1 \end{rotate}}};
\end{tikzpicture}
&
\begin{tikzpicture}[scale=0.6, line cap=round,line join=round,x=1cm,y=1cm]
\clip(1.9,0.9) rectangle (6.1,5.1);
\fill[line width=2pt,color=gray,fill=gray,fill opacity=0.1] (2,4) -- (3,4) -- (3,3) -- (2,3) -- cycle;
\fill[line width=2pt,color=gray,fill=gray,fill opacity=0.1] (3,3) -- (4,3) -- (4,2) -- (3,2) -- cycle;
\fill[line width=2pt,color=gray,fill=gray,fill opacity=0.1] (4,2) -- (5,2) -- (5,1) -- (4,1) -- cycle;
\fill[line width=2pt,color=gray,fill=gray,fill opacity=0.1] (3,5) -- (4,5) -- (4,4) -- (3,4) -- cycle;
\fill[line width=2pt,color=gray,fill=gray,fill opacity=0.1] (4,4) -- (5,4) -- (5,3) -- (4,3) -- cycle;
\fill[line width=2pt,color=gray,fill=gray,fill opacity=0.1] (6,2) -- (6,3) -- (5,3) -- (5,2) -- cycle;
\draw [line width=2pt,color=purple] (2,4)-- (2,5);
\draw [line width=2pt,color=purple] (2,5)-- (3,5);
\draw [line width=1pt] (3,5)-- (3,2);
\draw [line width=1pt] (3,2)-- (6,2);
\draw [line width=1pt] (5,1)-- (5,4);
\draw [line width=1pt] (5,4)-- (2,4);
\draw [line width=1pt] (4,5)-- (3,5);
\draw [line width=1pt] (4,5)-- (4,1);
\draw [line width=1pt] (6,3)-- (2,3);
\draw [line width=1pt] (2,3)-- (2,4);
\draw [line width=1pt] (4,1)-- (6,1);
\draw [line width=1pt] (6,1)-- (6,3);
\node at (2.5,4.5) {{\Large 1}};
\node at (2.5,3.5) {{\Large 1}};
\node at (3.5,4.5) {{\Large 1}};
\node at (3.5,3.5) {{\Large 1}};
\node at (3.5,2.5) {{\Large 1}};
\node at (4.5,3.5) {{\Large 1}};
\node at (4.5,2.5) {{\Large 1}};
\node at (4.2,1.1) {{\begin{rotate}{45}\tiny 0\,\text{or}\,1 \end{rotate}}};
\node at (5.2,2.1) {{\begin{rotate}{45}\tiny 0\,\text{or}\,1 \end{rotate}}};
\end{tikzpicture}
\\
&16 & 16 & 4
\end{tabular}
\caption{Total size 3 CPs and DSPPs counted with the weights of \eqref{E:CP_DSPP_weighted_example}. The number under each diagram represents the total number of distinct objects these diagrams represent.}
\label{F:CP_DSPP_figure}
\end{figure}
\end{remark}

As in \cite{HX}, symmetric cylindric partitions can be viewed as weighted DSPPs---that is, for $\delta=(\delta_1, \dots, \delta_h)$ we have
\begin{equation}\label{E:scprewriteasdspp} 
\mathrm{SCP}_{(-\mathrm{rev}(\delta), \delta)}(q)=\mathrm{DSPP}_{\delta}^{(1,2,2,\dots, 2,1)}(q).
\end{equation}

Consider those cylindric partitions with symmetric profile $(-\mathrm{rev}(\delta), \delta)$ which are themselves not symmetric, i.e. the set $\mathrm{CP}_{(-\mathrm{rev}(\delta), \delta)} \setminus \mathrm{SCP}_{(-\mathrm{rev}(\delta), \delta)}$.  One can show that \begin{align*} W_3((-\delta,\delta))=W_1^{(1,2, \dots,2,1)}(\delta) \cup \frac{1}{2}W_2^{(1,2, \dots,2,1)}(\delta) \cup \frac{1}{2}W_2^{(1,2, \dots,2,1)}(\delta)\end{align*} as a disjoint union of multisets, and this leads directly to a generating function with manifestly positive coefficients.
\begin{corollary}
Let $\delta=(\delta_1, \dots, \delta_h)$ be a profile.  Then with notation as in Proposition \ref{P:DSPPproductgeneral}, we have
$$
\sum_{\lambda \in \mathcal{CP}_{(-\mathrm{rev}(\delta), \delta)} \setminus \mathcal{SCP}_{(-\mathrm{rev}(\delta), \delta)}} q^{|\lambda|}=\prod_{\substack{k \in W_1^{(1,2,\dots,2,1)}(\delta) \\ \ell \in W_2^{(1,2,\dots,2,1)}(\delta)}} \frac{1}{(q^k;q^{2h})_{\infty}(q^{\ell};q^{4h})_{\infty}}\left(\prod_{\ell \in W_2^{(1,2,\dots,2,1)}(\delta)} \frac{(-q^{\ell/2};q^{2h})_{\infty}}{(q^{\ell/2};q^{2h})_{\infty}} -1\right).
$$
\end{corollary}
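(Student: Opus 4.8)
The plan is to express the left-hand side as a difference of two infinite products, via Propositions~\ref{P:borodin} and~\ref{P:DSPPproductgeneral}, and then to factor that difference by hand using an elementary $q$-Pochhammer identity.

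First I would observe that a symmetric cylindric partition $(\lambda^h, \dots, \lambda^1, \lambda^0, \lambda^1, \dots, \lambda^h)$ of profile $(-\mathrm{rev}(\delta), \delta)$ is, in particular, a member of $\mathcal{CP}_{(-\mathrm{rev}(\delta), \delta)}$, and that — counting the repeated outer diagonal $\lambda^h$ only once, as one does for cylindric partitions — its cylindric size coincides with the symmetric-cylindric size $|\lambda^0| + |\lambda^h| + 2\sum_{j=1}^{h-1} |\lambda^j|$. Hence $\mathcal{SCP}_{(-\mathrm{rev}(\delta), \delta)} \subseteq \mathcal{CP}_{(-\mathrm{rev}(\delta), \delta)}$ with matching weights, so the left-hand side equals $\mathrm{CP}_{(-\mathrm{rev}(\delta), \delta)}(q) - \mathrm{SCP}_{(-\mathrm{rev}(\delta), \delta)}(q)$. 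By Proposition~\ref{P:borodin} applied with width $2h$, the first term is $\prod_{k \in W_3((-\mathrm{rev}(\delta), \delta))} (q^k;q^{2h})_\infty^{-1}$; by \eqref{E:scprewriteasdspp} and Proposition~\ref{P:DSPPproductgeneral} applied with $\mathbf{a} = (1,2,\dots,2,1)$ — for which $A_i = 2i-1$ for $1 \le i \le h$ and $A_{h+1} = 2h$, so that $0$ lies in none of $W_1^{\mathbf{a}}(\delta)$, $W_2^{\mathbf{a}}(\delta)$ or $W_3$ — the second term is $\prod_{k \in W_1^{\mathbf{a}}(\delta)} (q^k;q^{2h})_\infty^{-1}\prod_{\ell \in W_2^{\mathbf{a}}(\delta)} (q^{\ell};q^{4h})_\infty^{-1}$.

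The combinatorial heart is the multiset identity $W_3((-\mathrm{rev}(\delta), \delta)) = W_1^{\mathbf{a}}(\delta) \sqcup \tfrac12 W_2^{\mathbf{a}}(\delta) \sqcup \tfrac12 W_2^{\mathbf{a}}(\delta)$, where $\tfrac12 W_2^{\mathbf{a}}(\delta) := \{\ell/2 : \ell \in W_2^{\mathbf{a}}(\delta)\}$ makes sense because every entry of $W_2^{\mathbf{a}}(\delta)$ is even (the $A_i$ being odd and $A_{h+1}$ even). I would prove this by sorting the index pairs $1 \le i < j \le 2h$ appearing in the definition of $W_3$ for $\sigma := (-\mathrm{rev}(\delta), \delta)$ according to whether both $i,j$ lie in the first half $\{1,\dots,h\}$, both in the second half $\{h+1,\dots,2h\}$, or one in each, using that $\sigma_i = -\delta_{h+1-i}$ on the first half and $\sigma_i = \delta_{i-h}$ on the second. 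Each of the two ``same-half'' cases contributes a copy of $\{j-i : 1 \le i < j \le h,\ \delta_i > \delta_j\} \sqcup \{2h-(j-i): 1\le i< j\le h,\ \delta_i < \delta_j\}$, i.e.\ the ``difference'' part of $\tfrac12 W_2^{\mathbf{a}}(\delta)$, so twice in all; the ``straddling'' case gives parts $j-i = a+b-1$ with $a,b$ ranging independently over $\{1,\dots,h\}$, whose diagonal $a=b$ yields $\{2i-1: \delta_i=-1\} \sqcup \{2h-(2i-1):\delta_i=1\}$ (the non-singleton part of $W_1^{\mathbf{a}}(\delta)$) and whose off-diagonal pairs yield two copies of $\{i+j-1: 1\le i<j\le h,\ \delta_i=\delta_j=-1\} \sqcup \{2h-(i+j-1): 1\le i<j\le h,\ \delta_i=\delta_j=1\}$ (the ``sum'' part of $\tfrac12 W_2^{\mathbf{a}}(\delta)$). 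Together with the singleton $\{2h\} \subseteq W_3$ matching $\{A_{h+1}\} \subseteq W_1^{\mathbf{a}}(\delta)$, this yields the identity. I expect this bookkeeping — in particular, seeing that the symmetry in $a,b$ turns the off-diagonal straddling terms into exactly two copies — to be the main obstacle, though it is elementary throughout.

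Granting the multiset identity, $\mathrm{CP}_{(-\mathrm{rev}(\delta), \delta)}(q) = \prod_{k \in W_1^{\mathbf{a}}(\delta)} (q^k;q^{2h})_\infty^{-1}\prod_{\ell \in W_2^{\mathbf{a}}(\delta)} (q^{\ell/2};q^{2h})_\infty^{-2}$. Subtracting $\mathrm{SCP}_{(-\mathrm{rev}(\delta), \delta)}(q)$ and pulling out the common factor $\prod_{k \in W_1^{\mathbf{a}}(\delta)} (q^k;q^{2h})_\infty^{-1}\prod_{\ell \in W_2^{\mathbf{a}}(\delta)} (q^{\ell};q^{4h})_\infty^{-1}$, the claimed formula reduces to the per-$\ell$ identity $(q^{\ell/2};q^{2h})_\infty^{-2} = (q^{\ell};q^{4h})_\infty^{-1}\,(-q^{\ell/2};q^{2h})_\infty/(q^{\ell/2};q^{2h})_\infty$, which is immediate from $(x;t)_\infty(-x;t)_\infty = (x^2;t^2)_\infty$ with $x = q^{\ell/2}$ and $t = q^{2h}$; the bracketed difference then collapses to $\prod_{\ell \in W_2^{\mathbf{a}}(\delta)} (-q^{\ell/2};q^{2h})_\infty/(q^{\ell/2};q^{2h})_\infty - 1$. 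Finally, this last factor has constant term $1$ and otherwise non-negative coefficients, so the resulting product has manifestly non-negative coefficients, as it must.
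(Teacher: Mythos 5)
Your proof is correct and takes essentially the same route the paper indicates (and only sketches): write the left side as $\mathrm{CP}_{(-\mathrm{rev}(\delta),\delta)}(q)-\mathrm{SCP}_{(-\mathrm{rev}(\delta),\delta)}(q)$, apply Borodin's product and the weighted DSPP product via \eqref{E:scprewriteasdspp}, use the multiset identity $W_3((-\mathrm{rev}(\delta),\delta))=W_1^{(1,2,\dots,2,1)}(\delta)\sqcup\tfrac12 W_2^{(1,2,\dots,2,1)}(\delta)\sqcup\tfrac12 W_2^{(1,2,\dots,2,1)}(\delta)$, and factor with $(x;t)_\infty(-x;t)_\infty=(x^2;t^2)_\infty$. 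Your index bookkeeping for the multiset identity and the check that the symmetric-cylindric size agrees with the cylindric size correctly fill in the details the paper leaves to the reader.
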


In Section \ref{S:recurrences}, we introduce and prove recurrences for two variable analogues of $\mathrm{CP}_{\delta}(z;q)$ with weighted size,
\begin{equation}\label{E:twovargenfnsdef}
\mathrm{CP}_{\delta}^{{\bf a}}(z;q):=\sum_{\lambda \in \mathcal{CP}_{\delta}} z^{\mathrm{max}(\lambda)}q^{|\lambda|_{{\bf a}}},
\qquad \mathrm{DSPP}_{\delta}^{{\bf a}}(z;q):=\sum_{\lambda \in \mathcal{DSPP}_{\delta}} z^{\mathrm{max}(\lambda)}q^{|\lambda|_{{\bf a}}}.
\end{equation}

Theorem \ref{T:scpwidth2} follows from solving the symmetric cylindric cases of width 4 (or equivalently, the DSPP cases of width 2 and weight $(1,2,1)$), Theorem \ref{T:scpwidth3} from the symmetric cylindric cases of width 6 (or equivalently, the DSPP cases of width 3 and weight $(1,2,2,1)$), and Theorem \ref{C:gollnitz} from the standard weight DSPP cases of width 3.

\section{Systems of recurrences for cylindric partitions and DSPPs}\label{S:recurrences}

To demonstrate Corteel--Welsh's recurrence and our generalizations, consider the following toy example: If $\mathrm{lg}(\lambda)$ denotes the largest part of the integer partition $\lambda$, then one can prove the following recurrence for $P(z;q):=\sum_{\lambda \in \mathcal{P}}z^{\mathrm{lg}(\lambda)}q^{|\lambda|}$,
\begin{equation}\label{E:Partitionrecurrence}
P(z;q)=\sum_{\substack{\mu \in \mathcal{P} \\ m \geq 0}} z^{\mathrm{\lg}(\mu)+m}q^{m + |\mu|}=\frac{P(zq;q)}{1-zq}.
\end{equation}
The proof consists taking a partition $\lambda$ on the left-hand side and removing $\mathrm{lg}(\lambda)$ to create a new partition $\mu$; one then has $\mathrm{lg}(\lambda)=\mathrm{lg}(\mu)+m$ for some $m \geq 0$.  From \eqref{E:Partitionrecurrence}, it is easy to derive Euler's product-sum identity (\cite{A}, Corollary 2.2)
$$
P(z;q)=\sum_{n \geq 0} \frac{z^nq^n}{(q;q)_n}=\frac{1}{(zq;q)_{\infty}}.
$$

If we begin instead with a cylindric partition generated by $\mathrm{CP}_{\delta}(z;q)$ and remove some largest parts in the ``corners'' where they occur, then the resulting cylindric partition has a new profile.  Since subsets of largest parts can be removed in multiple ways, an inclusion-exclusion process leads to the following analogue of \eqref{E:Partitionrecurrence}.
\begin{proposition}[\cite{CW}, Proposition 3.1, reformulated]\label{P:CorteelWelshrecurrence}
Let $\delta=(\delta_1, \dots, \delta_h)$ be a profile, and for convenience define $\delta_{0}:=\delta_h$.  Define $$I_{\delta}:=\{0 \leq j \leq h-1 : (\delta_j,\delta_{j+1})=(1,-1)\}.$$  For a subset $\emptyset \subsetneq J \subseteq I_{\delta}$, define a new profile $\sigma_J(\delta)$ by swapping the signs of $(\delta_j,\delta_{j+1})$ for $j \in J$.  Then
\begin{equation}\label{E:CWrecurrence} {\rm CP}_{\delta}(z;q)=\sum_{\emptyset \subsetneq J \subseteq I_{\delta}} (-1)^{|J|-1}\frac{{\rm CP}_{\sigma_J(\delta)}\left(zq^{|J|};q\right)}{1-zq^{|J|}}.
\end{equation}
\end{proposition}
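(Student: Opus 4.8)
The plan is to prove \eqref{E:CWrecurrence} by a signed combinatorial correspondence that peels the largest parts off at the ``peaks'' of the profile, so that the inclusion--exclusion over subsets $J$ collapses to a telescoping of binomial signs.

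First I would record the geometric meaning of $I_\delta$. Writing $c_j := \lambda^j_1$ for the first part on each diagonal (with $c_0=c_h$), the interlacing conditions force $c_{j-1} \le c_j$ when $\delta_j=1$ and $c_{j-1}\ge c_j$ when $\delta_j=-1$; hence a strict local maximum of the cyclic sequence $(c_j)$ can occur only at an index $j$ with $(\delta_j,\delta_{j+1})=(1,-1)$, i.e. at a peak $j\in I_\delta$. I would then check that $\max(\lambda)$ is always attained at some peak: on a maximal plateau $c_a=\dots=c_b=\max(\lambda)$ one has $\delta_a=1$ and $\delta_{b+1}=-1$, so the sign string $\delta_a,\dots,\delta_{b+1}$ descends from $1$ to $-1$ and must contain a peak. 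Consequently the set $P(\lambda):=\{j\in I_\delta : \lambda^j_1=\max(\lambda)\}$ of \emph{maximal peaks} is nonempty for every nonempty $\lambda$ (and equals $I_\delta$ for the empty partition), so I will assume $I_\delta\neq\emptyset$, which excludes only the monotone profiles for which the right-hand side is vacuous.

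Next comes the core local move. For a peak $j$ I claim that deleting the first part and shifting, $\mu^j:=(\lambda^j_2,\lambda^j_3,\dots)$, converts the peak into a valley: a short expansion of $\lambda^{j}\succeq\lambda^{j\pm1}$ shows that the new diagonal satisfies $\lambda^{j-1}\succeq\mu^j$ and $\lambda^{j+1}\succeq\mu^j$, which is exactly the interlacing dictated by flipping $(\delta_j,\delta_{j+1})=(1,-1)$ to $(-1,1)$. Since peaks are never adjacent, for $j,j'\in I_\delta$ the affected step-pairs $\{j,j+1\}$ and $\{j',j'+1\}$ are disjoint, so for any $\emptyset\neq J\subseteq P(\lambda)$ the simultaneous deletions $\rho_J$ yield a well-defined $\mu=\rho_J(\lambda)\in\mathcal{CP}_{\sigma_J(\delta)}$ with $|\mu|=|\lambda|-|J|\max(\lambda)$ and $\max(\mu)\le\max(\lambda)$. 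The inverse move $\iota_J$ prepends a common value $t$ as a new first part on each diagonal $j\in J$; using $t\ge\max(\mu)$ one checks $\iota_J(\mu,t)\in\mathcal{CP}_\delta$, that its maximum equals $t$, and that $J\subseteq P(\iota_J(\mu,t))$. Together these give a monomial-preserving correspondence
\[ \{(\mu,t,J) : \emptyset\neq J\subseteq I_\delta,\ \mu\in\mathcal{CP}_{\sigma_J(\delta)},\ t\ge\max(\mu)\} \ \longleftrightarrow\ \{(\lambda,J) : \lambda\in\mathcal{CP}_\delta,\ \emptyset\neq J\subseteq P(\lambda)\}, \]
under which $z^t q^{|\mu|+t|J|}$ matches $z^{\max(\lambda)}q^{|\lambda|}$.

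Finally I would expand the right-hand side of \eqref{E:CWrecurrence}: writing $\tfrac{1}{1-zq^{|J|}}=\sum_{k\ge0} z^k q^{k|J|}$ and absorbing the factor $(zq^{|J|})^{\max(\mu)}$ turns each summand into $\sum_{\mu\in\mathcal{CP}_{\sigma_J(\delta)}}\sum_{t\ge\max(\mu)} z^t q^{|\mu|+t|J|}$. Applying the correspondence and regrouping by $\lambda$ gives
\[ \sum_{\emptyset\neq J\subseteq I_\delta}(-1)^{|J|-1}\frac{\mathrm{CP}_{\sigma_J(\delta)}(zq^{|J|};q)}{1-zq^{|J|}} = \sum_{\lambda\in\mathcal{CP}_\delta} z^{\max(\lambda)}q^{|\lambda|}\sum_{\emptyset\neq J\subseteq P(\lambda)}(-1)^{|J|-1}, \]
and the inner sum equals $1-(1-1)^{|P(\lambda)|}=1$ because $P(\lambda)\neq\emptyset$, yielding $\mathrm{CP}_\delta(z;q)$. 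The main obstacle is the third paragraph: one must verify carefully that the single-peak deletion flips exactly the two signs $\delta_j,\delta_{j+1}$ while preserving every other interlacing relation, and that the global bound $t\ge\max(\mu)$ built into $\tfrac{1}{1-zq^{|J|}}$ is automatically consistent upon reconstructing $\lambda$ so that no spurious triples appear. It is precisely the mismatch between this global bound and the weaker local validity condition ($t$ need only dominate the neighbors of each inserted peak) that forces the signed inclusion--exclusion, rather than a naive bijection, to be the correct bookkeeping.
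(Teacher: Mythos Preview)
Your argument is correct and is precisely the Corteel--Welsh proof that the paper cites rather than reproves: remove the largest part at each chosen peak $j\in J\subseteq P(\lambda)$ (which flips $(\delta_j,\delta_{j+1})$ from $(1,-1)$ to $(-1,1)$), observe that the inverse insertion is valid exactly when $t\ge\max(\mu)$, and then collapse the signed sum over $\emptyset\neq J\subseteq P(\lambda)$ via the binomial identity. The paper summarizes this in the sentence preceding the proposition (``remove some largest parts in the `corners'\dots an inclusion--exclusion process''), so your write-up is a faithful expansion of that sketch; your explicit check that peaks are pairwise nonadjacent and your handling of the degenerate monotone-profile case are both sound.
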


Similarly, we have the following systems of recurrences for weighted cylindric partitions and weighted DSPPs.  The proofs are simple adjustments of the proof of Proposition 3.1 in \cite{CW}.

\begin{proposition}\label{P:CPrecurrencegeneral}
Let $\delta=(\delta_1, \dots, \delta_h)$ be a profile and let ${\bf a}\in \mathbb{R}_{\geq 0}^h$.  Then with the same notation as in Proposition \ref{P:CorteelWelshrecurrence}, we have
\begin{equation}\label{E:CWrecurrencegeneral} {\rm CP}^{{\bf a}}_{\delta}(z;q)=\sum_{\emptyset \subsetneq J \subseteq I_{\delta}} (-1)^{|J|-1}\frac{{\rm CP}^{{\bf a}}_{\sigma_J(\delta)}\left(zq^{\sum_{j \in J} a_j};q\right)}{1-zq^{\sum_{j \in J} a_j}}.
\end{equation}
\end{proposition}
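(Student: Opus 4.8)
The plan is to imitate the proof of \cite[Proposition 3.1]{CW} (Proposition \ref{P:CorteelWelshrecurrence} above) essentially verbatim, the single modification being that deleting or inserting a box that lies on the $j$-th diagonal changes the weighted size $|\lambda|_{{\bf a}}$ by $a_j$ instead of by $1$; this is exactly what promotes each factor $q^{|J|}$ in \eqref{E:CWrecurrence} to $q^{\sum_{j\in J}a_j}$. We may assume throughout that $\delta$ is non-constant, so that $I_\delta\neq\emptyset$ and the right-hand side of \eqref{E:CWrecurrencegeneral} is nonempty.

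First I would isolate two elementary facts about the corner set $I_\delta$. (i) No two elements of $I_\delta$ are cyclically adjacent: if $j,j+1\in I_\delta$, then $\delta_{j+1}$ would have to be simultaneously $1$ (from $j+1\in I_\delta$) and $-1$ (from $j\in I_\delta$); the cyclic case $\{0,h-1\}$ is ruled out the same way using $\delta_0=\delta_h$. Consequently, for a nonempty $J\subseteq I_\delta$ the local operations performed at the different corners $j\in J$ involve pairwise disjoint diagonals and do not interact. (ii) For every $\lambda\in\mathcal{CP}_\delta$ the value $\mathrm{max}(\lambda)$ occurs as the leading part $\lambda^j_1$ of at least one diagonal with $j\in I_\delta$: if $\lambda^j_1=\mathrm{max}(\lambda)$ but $j\notin I_\delta$, then, since $(\delta_j,\delta_{j+1})\in\{(1,1),(-1,-1),(-1,1)\}$, the interlacing relations force $\mathrm{max}(\lambda)$ to occur as a leading part on an adjacent diagonal as well, and because the profile is cyclic, iterating must eventually reach a corner. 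Writing $S(\lambda):=\{\,j\in I_\delta : \lambda^j_1=\mathrm{max}(\lambda)\,\}$, fact (ii) says $S(\lambda)\neq\emptyset$ for all $\lambda$ (in particular for the empty cylindric partition, where $S(\lambda)=I_\delta$).

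The core of the argument is then, for each fixed nonempty $J\subseteq I_\delta$, the bijection that deletes the leading part (which equals $\mathrm{max}(\lambda)$) from $\lambda^j$ for every $j\in J$,
\[
\{\lambda\in\mathcal{CP}_\delta : J\subseteq S(\lambda)\}\;\longrightarrow\;\{(\mu,m) : \mu\in\mathcal{CP}_{\sigma_J(\delta)},\ m\geq 0\},\qquad m:=\mathrm{max}(\lambda)-\mathrm{max}(\mu).
\]
I would check that this is well defined: removing the leading entry of $\lambda^j$ turns the peak at $j$ (namely $(\delta_j,\delta_{j+1})=(1,-1)$) into the valley $(-1,1)$, so the resulting profile is precisely $\sigma_J(\delta)$, and the interlacing inequalities that survive form a subset of the original ones; by fact (i) these changes at distinct $j\in J$ are independent. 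Its inverse inserts a new leading part of size $\mathrm{max}(\mu)+m$ into $\mu^j$ for each $j\in J$, and is well defined because $\mathrm{max}(\mu)+m\geq\mathrm{max}(\mu)$ dominates every entry of the neighbouring diagonals, so the only genuinely new inequalities hold, while the rest come from the defining relations of $\mu$. Under this correspondence $\mathrm{max}(\lambda)=\mathrm{max}(\mu)+m$ and $|\lambda|_{{\bf a}}=|\mu|_{{\bf a}}+(\mathrm{max}(\mu)+m)\sum_{j\in J}a_j$, whence
\[
\sum_{\substack{\lambda\in\mathcal{CP}_\delta\\ J\subseteq S(\lambda)}} z^{\mathrm{max}(\lambda)}q^{|\lambda|_{{\bf a}}}
=\sum_{\substack{\mu\in\mathcal{CP}_{\sigma_J(\delta)}\\ m\geq 0}} z^{\mathrm{max}(\mu)+m}q^{\,|\mu|_{{\bf a}}+(\mathrm{max}(\mu)+m)\sum_{j\in J}a_j}
=\frac{\mathrm{CP}^{{\bf a}}_{\sigma_J(\delta)}\!\bigl(zq^{\sum_{j\in J}a_j};q\bigr)}{1-zq^{\sum_{j\in J}a_j}}.
\]
To conclude, I would multiply by $(-1)^{|J|-1}$, sum over all $\emptyset\subsetneq J\subseteq I_\delta$, and interchange the order of summation over $\lambda$ and $J$: the coefficient of $z^{\mathrm{max}(\lambda)}q^{|\lambda|_{{\bf a}}}$ collapses to $\sum_{\emptyset\subsetneq J\subseteq S(\lambda)}(-1)^{|J|-1}=1$ by inclusion--exclusion, using $S(\lambda)\neq\emptyset$, and this reconstitutes $\mathrm{CP}^{{\bf a}}_\delta(z;q)$. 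The only genuinely delicate points---both already present in \cite{CW}---are the propagation argument behind fact (ii) and the bookkeeping at the cyclic seam $\lambda^0=\lambda^h$ when $0\in J$ (where one must remember that $|\lambda|_{{\bf a}}$ counts $\lambda^0$ only once, with weight $a_0$); the weighted generalization introduces no new difficulty beyond carrying the exponents $a_j$ through these steps.
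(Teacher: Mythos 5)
Your argument is correct and is exactly the route the paper intends: the paper proves Proposition \ref{P:CPrecurrencegeneral} by citing the corner-removal inclusion--exclusion proof of \cite[Proposition 3.1]{CW}, noting only that the adjustment for weights is straightforward, and your write-up spells out precisely that adjustment (each deletion at a corner $j\in J$ now shifts the weighted size by $a_j\cdot\mathrm{max}(\lambda)$, giving $q^{\sum_{j\in J}a_j}$ in place of $q^{|J|}$). The supporting facts you isolate (non-adjacency of corners, the maximum occurring at a corner, and the collapse $\sum_{\emptyset\subsetneq J\subseteq S(\lambda)}(-1)^{|J|-1}=1$) are the same ingredients as in Corteel--Welsh, so no further comment is needed.
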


Note that for DSPPs, the set $\hat{I}_{\delta}$ and maps $\hat{\sigma}_J$ below are subtly different.  
\begin{proposition}\label{P:DSPPrecurrencegeneral}
Let $\delta=(\delta_1, \dots, \delta_h)$ be a profile and let ${\bf a} \in \mathbb{R}_{\geq 0}^{h+1}$.  For convenience, define $\delta_{h+1}:=-\delta_h$ and $\delta_0:=-\delta_1$.  Define $$\hat{I}_{\delta}:=\{0 \leq j \leq h : (\delta_j,\delta_{j+1})=(1,-1)\}.$$  For a subset $\emptyset \subsetneq J \subseteq I_{\delta}$, define a new profile $\hat{\sigma}_J(\delta)$ by swapping the signs of $(\delta_j,\delta_{j+1})$ for $j \in J$.  Then
\begin{equation}\label{E:DSPPrecurrencegeneral} {\rm DSPP}_{\delta}^{{\bf a}}(z;q)=\sum_{\emptyset \subsetneq J \subseteq \hat{I}_{\delta}} (-1)^{|J|-1}\frac{{\rm DSPP}^{{\bf a}}_{\hat{\sigma}_J(\delta)}\left(zq^{\sum_{j \in J}a_j};q\right)}{1-zq^{\sum_{j \in J}a_j}}.
\end{equation}
\end{proposition}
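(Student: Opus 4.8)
The plan is to carry over the Corteel--Welsh ``peeling'' argument of \cite[Proposition~3.1]{CW}, the very one behind Proposition~\ref{P:CorteelWelshrecurrence} and Proposition~\ref{P:CPrecurrencegeneral}, from cylindric partitions to DSPPs, and I would open by isolating the two features of the DSPP setting that cause the differences in the statement. First, dropping the periodicity constraint $\lambda^0=\lambda^h$ turns the two extreme diagonals $\lambda^0$ and $\lambda^h$ into ``free'' corners, with no constraint coming from outside the shape; this is exactly what the conventions $\delta_0:=-\delta_1$, $\delta_{h+1}:=-\delta_h$ and the enlarged range $0\le j\le h$ in $\hat{I}_{\delta}$ are built to record. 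Second, deleting a box from the diagonal $\lambda^j$ decreases $|\lambda|_{{\bf a}}$ by $a_j$ rather than by $1$, which is what turns the $q^{|J|}$ of \eqref{E:CWrecurrence} into $q^{\sum_{j \in J}a_j}$.

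Next I would set up the combinatorics of removable corners. For $\lambda\in\mathcal{DSPP}_{\delta}$ put $M:=\mathrm{max}(\lambda)=\max_j\lambda^j_1$ and call $j\in\hat{I}_{\delta}$ \emph{active} if $\lambda^j_1=M$; since $(\delta_j,\delta_{j+1})=(1,-1)$ there, each such $\lambda^j$ is a local peak $\lambda^{j-1}\preceq\lambda^j\succeq\lambda^{j+1}$, with the inequality on one side simply absent when $j\in\{0,h\}$. I would then show the set $A(\lambda)$ of active indices is always nonempty by a ``walk to a peak'': from any $j_0$ with $\lambda^{j_0}_1=M$, step to the neighbouring diagonal forced by the relevant $\succeq$ or $\preceq$ to also have first part $M$ (left when $\delta_j=-1$, right when $\delta_{j}=+1=\delta_{j+1}$); this walk moves monotonically away from its starting point, must halt at a peak, and if it reaches $j=0$ or $j=h$ the conventions $\delta_0=-\delta_1$, $\delta_{h+1}=-\delta_h$ guarantee that this terminal index is a genuine peak. (In particular the empty DSPP has $A(\lambda)=\hat{I}_{\delta}$, which will keep constant terms in line.) I would also note that no two consecutive indices can both lie in $\hat{I}_{\delta}$, so for any $J\subseteq\hat{I}_{\delta}$ the sign flips defining $\hat{\sigma}_J(\delta)$ neither collide nor conflict with the conventions on the fictitious $\delta_0,\delta_{h+1}$.

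Then comes the bijective heart. Given nonempty $J\subseteq A(\lambda)$, delete the corner box $\lambda^j_1$ from $\lambda^j$ for each $j\in J$ to obtain $\lambda^{[J]}$; at each such $j$ the peak becomes the valley $\lambda^{j-1}\succeq(\lambda^{[J]})^j\preceq\lambda^{j+1}$, and the interlacing inequalities for $(\lambda^{[J]})^j=(\lambda^j_2,\lambda^j_3,\dots)$ follow termwise from the original ones --- with one side vacuous, hence easier, when $j\in\{0,h\}$ --- so $\lambda^{[J]}\in\mathcal{DSPP}_{\hat{\sigma}_J(\delta)}$ with $|\lambda^{[J]}|_{{\bf a}}=|\lambda|_{{\bf a}}-M\sum_{j\in J}a_j$ and $\mathrm{max}(\lambda^{[J]})\le M$. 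The assignment $\lambda\mapsto(\lambda^{[J]},M)$ is then a bijection onto $\{(\mu,M):\mu\in\mathcal{DSPP}_{\hat{\sigma}_J(\delta)},\ M\ge\mathrm{max}(\mu)\}$, the inverse re-inserting a part equal to $M$ at the head of $\mu^j$ for each $j\in J$. Writing $M=\mathrm{max}(\mu)+m$ with $m\ge 0$ and using $z^{\mathrm{max}(\lambda)}q^{|\lambda|_{{\bf a}}}=\big(zq^{\sum_{j\in J}a_j}\big)^{\mathrm{max}(\mu)+m}q^{|\mu|_{{\bf a}}}$, a sum over $\mu$ and $m$ gives
\[
\sum_{\substack{\lambda\in\mathcal{DSPP}_{\delta}\\ A(\lambda)\supseteq J}}z^{\mathrm{max}(\lambda)}q^{|\lambda|_{{\bf a}}}=\frac{\mathrm{DSPP}^{{\bf a}}_{\hat{\sigma}_J(\delta)}\!\big(zq^{\sum_{j\in J}a_j};q\big)}{1-zq^{\sum_{j\in J}a_j}}.
\]
Multiplying by $(-1)^{|J|-1}$, summing over $\emptyset\subsetneq J\subseteq\hat{I}_{\delta}$, and interchanging the order of summation, each $\lambda$ is counted with total weight $\sum_{\emptyset\subsetneq J\subseteq A(\lambda)}(-1)^{|J|-1}=1$ (nonempty by the walk argument), which recovers $\mathrm{DSPP}^{{\bf a}}_{\delta}(z;q)$ and proves \eqref{E:DSPPrecurrencegeneral}.

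The interlacing checks and the generating-function bookkeeping are routine adaptations of \cite{CW}; the one genuinely new point, and the step I expect to be the main (if mild) obstacle, is confirming that the two boundary corners $j=0$ and $j=h$ really do behave like ordinary removable peaks once the fictitious signs $\delta_0=-\delta_1$, $\delta_{h+1}=-\delta_h$ are imposed: that the walk in the second step terminates there, that the box deletion is reversible with no phantom constraint coming from ``outside'' the shape, and that $\hat{\sigma}_J$ stays well defined at those indices.
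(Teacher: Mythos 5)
Your argument is correct and is essentially the paper's own approach: the paper proves this proposition simply by noting it is a ``simple adjustment'' of the peeling/inclusion--exclusion proof of Proposition~3.1 in \cite{CW}, which is exactly the adaptation you carry out, including the key boundary point that the conventions $\delta_0=-\delta_1$, $\delta_{h+1}=-\delta_h$ make $\lambda^0$ and $\lambda^h$ behave as removable corners. No gaps; your detailed treatment of the walk-to-a-peak nonemptiness of the active set and of the weighted size bookkeeping matches what the cited argument requires.
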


\begin{remark}
For cylindric partitions, the maps $\sigma_J$ permute the entries of profiles, so the rank---the number of $(-1)$'s---is preserved.  Hence, there is a system of $\binom{h}{k}$ recurrences \eqref{E:CWrecurrencegeneral} for width $h$, rank $k$ profiles $\delta.$

By contrast, for DSPPs the maps $\sigma_J$ act transitively on all profiles of width $h$.  Thus, in general there is a single system of $2^h$ recurrences \eqref{E:DSPPrecurrencegeneral}.  However, for some weights, like those corresponding to symmetric cylindric partitions, the number of equations can be reduced through certain symmetries.
\end{remark}

Following \cite{CW}, our analysis will simplify if the denominators in \eqref{E:CWrecurrencegeneral} and \eqref{E:DSPPrecurrencegeneral} are removed by defining
$$
G_{\delta}(z):=(zq;q)_{\infty}\mathrm{CP}^{{\bf a}}_{\delta}(z;q).
$$
(We will suppress the weight ${\bf a}$ and the second variable $q$ in $G_{\delta}$ to save space when they are clear from context.)  Then \eqref{E:CWrecurrencegeneral} becomes
\begin{equation}\label{E:adjustedCWrecurrencegeneral}
    G_{\delta}(z)=\sum_{\emptyset \subsetneq J \subseteq I_{\delta}}(-1)^{|J|-1} (zq;q)_{\sum_{j \in J} a_j-1} G_{\sigma_J(\delta)}\left(zq^{\sum_{j \in J}a_j}\right).
\end{equation}
Likewise, with $a_j \geq 1$ for all $j$ and
$$
H_{\delta}(z):=(zq;q)_{\infty}\mathrm{DSPP}^{{\bf a}}_{\delta}(z;q),
$$
\eqref{E:DSPPrecurrencegeneral} becomes
\begin{equation}\label{E:adjustedDSPPrecurrencegeneral}
    H_{\delta}(z)=\sum_{\emptyset \subsetneq J \subseteq I_{\delta}}(-1)^{|J|-1} (zq;q)_{\sum_{j \in J} a_j-1} H_{\sigma_J(\delta)}\left(zq^{\sum_{j \in J}a_j}\right).
\end{equation}

\section{Proofs of Theorems~\ref{T:scpwidth2}, \ref{T:scpwidth3}, and \ref{C:gollnitz}}\label{S:proofs}

Although the presented results can be performed by hand, to avoid error-prone and tedious calculations, we mainly use two symbolic computation implementations in Mathematica to carry out our calculations: the package \texttt{qFunctions} by Ablinger and the second author \cite{qFuncs}, and the package \texttt{HolonomicFunctions} by Koutchan \cite{HoloFuncs}. These implementations are distributed through the RISC (Research Institute for Symbolic Computation, Johannes Kepler University, Linz) openly for researchers to benefit from. These implementations and more can be downloaded through \url{https://risc.jku.at/software/}.

Koutchan's \texttt{HolonomicFunctions} \cite{HoloFuncs} implementation is  well established and offers state of the art technology in holonomic functions research (in our context, functions that satisfy linear $q$-recurrence relations). In particular, this package includes functionality that automatically uncouples a coupled system of recurrences and it also has the creative telescoping algorithm to automatically produce, proof, and certify recurrence relations for given closed hypergeometric formulas. 

The \texttt{qFunctions} package \cite{qFuncs} by Ablinger and the second author offers many tools to symbolically manipulate (such as by doing substitutions) linear functional relations of $q$-holonomic functions and to symbolically create the $q$-difference relations that Corteel--Welsh \cite{CW} originally presented. We include a short Appendix to list the new functionality (that is soon to be added to the main \texttt{qFunctions} release) which implements new functions related to this paper.

It might be possible to prove some of these identities using hypergeometric means directly. However, we would like to present proofs solely through coupled system of $q$-difference equations, which were introduced in Section~\ref{S:recurrences}. This is to emphasize the technique of how one can go about reducing the coupled system and under fortunate circumstances guess a sum representation of the generating functions for the cylindric partitions and other objects. These proofs are in the spirit of \cite{CW} and \cite{CDU}.

\subsection{Proof of Theorem~\ref{T:scpwidth2} and related results:}\label{SubS:SCP4} Earlier we defined the symmetric cylindric partitions with regards to DSPPs  \eqref{E:scprewriteasdspp}. We extract  Theorems~\ref{T:scpwidth2} and \ref{T:scpwidth3} through the study of the generating functions of the number of symmetric cylindric partitions width 4 and 6, respectively. 

\begin{lemma}\label{L:width2zqSCPproducts} For the profiles with width 4, we have  
\begin{align*}
     \mathrm{SCP}_{(-1,-1,1,1)}(z;q) &= \frac{(-z q^4 , z q^2;q^4)_\infty}{(zq;q)_\infty},\\
    \mathrm{SCP}_{(1,-1,1,-1)}(z;q) &= \frac{(-z q^3 , z q;q^4)_\infty}{(zq;q)_\infty},\\
    \mathrm{SCP}_{(-1,1,-1,1)}(z;q) &= \frac{(-z q , z q^3;q^4)_\infty}{(zq;q)_\infty}.
\end{align*}
\end{lemma}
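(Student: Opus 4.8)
The plan is to use the identity \eqref{E:scprewriteasdspp}, which gives $\mathrm{SCP}_{(-\mathrm{rev}(\delta),\delta)}(q)=\mathrm{DSPP}_{\delta}^{(1,2,1)}(q)$ for width-$2$ profiles $\delta$; since reflecting a DSPP about its boundary diagonal changes neither $\mathrm{max}$ nor the weighted size $|\cdot|_{(1,2,1)}$, this identification upgrades to the two-variable generating functions, so $\mathrm{SCP}_{(-\mathrm{rev}(\delta),\delta)}(z;q)=\mathrm{DSPP}_{\delta}^{(1,2,1)}(z;q)$. There are four width-$2$ profiles $(1,1),(1,-1),(-1,1),(-1,-1)$, corresponding to the four symmetric width-$4$ profiles $(-1,-1,1,1),(1,-1,1,-1),(-1,1,-1,1),(1,1,-1,-1)$. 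So the idea is to solve the DSPP recurrence of Proposition~\ref{P:DSPPrecurrencegeneral}, in its denominator-cleared form \eqref{E:adjustedDSPPrecurrencegeneral} (valid since all weights $a_j\ge1$), for the functions $H_{\delta}(z):=(zq;q)_{\infty}\mathrm{DSPP}_{\delta}^{(1,2,1)}(z;q)$, and then divide by $(zq;q)_{\infty}$; the fourth profile $(1,1,-1,-1)$ will turn out to give the same answer as $(-1,-1,1,1)$, which is why the Lemma lists only three formulas.

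First I would make the four recurrences explicit. With the boundary conventions $\delta_0=-\delta_1$ and $\delta_3=-\delta_2$, one finds the cyclic corner sets $\hat I_{(1,1)}=\{2\}$, $\hat I_{(-1,-1)}=\{0\}$, $\hat I_{(1,-1)}=\{1\}$, $\hat I_{(-1,1)}=\{0,2\}$, and then reading off the profile surgeries $\hat\sigma_J$ and the argument shifts $zq^{\sum_{j\in J}a_j}$ with ${\bf a}=(1,2,1)$ yields
\begin{align*}
H_{(1,1)}(z)&=H_{(1,-1)}(zq), &H_{(-1,-1)}(z)&=H_{(1,-1)}(zq),\\
H_{(1,-1)}(z)&=(1-zq)\,H_{(-1,1)}(zq^{2}), &H_{(-1,1)}(z)&=H_{(1,1)}(zq)+H_{(-1,-1)}(zq)-(1-zq)\,H_{(1,-1)}(zq^{2}).
\end{align*}
The first two relations show $H_{(1,1)}=H_{(-1,-1)}$, and that both equal $H_{(1,-1)}(zq^{2})$ after the substitution $z\mapsto zq$; substituting into the fourth relation gives $H_{(-1,1)}(z)=(1+zq)\,H_{(1,-1)}(zq^{2})$, and feeding this back into the third produces the single closed $q$-difference equation
\[H_{(1,-1)}(z)=(1-zq)(1+zq^{3})\,H_{(1,-1)}(zq^{4}).\]

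Next I would solve this by iteration. Since each $H_{\delta}(z)$ is a power series in $q$ whose coefficients are polynomials in $z$, the substitution $z\mapsto zq^{4}$ strictly raises $q$-order, so iterating converges coefficientwise and gives $H_{(1,-1)}(z)=\prod_{k\ge0}(1-zq^{4k+1})(1+zq^{4k+3})\cdot H_{(1,-1)}(0)$; the constant is $H_{(1,-1)}(0)=\mathrm{DSPP}_{(1,-1)}^{(1,2,1)}(0;q)=1$, since only the all-empty tuple has $\mathrm{max}=0$. Hence $H_{(1,-1)}(z)=(zq;q^{4})_{\infty}(-zq^{3};q^{4})_{\infty}$, and back-substitution yields $H_{(1,1)}(z)=H_{(-1,-1)}(z)=(zq^{2};q^{4})_{\infty}(-zq^{4};q^{4})_{\infty}$ and $H_{(-1,1)}(z)=(1+zq)(zq^{3};q^{4})_{\infty}(-zq^{5};q^{4})_{\infty}=(zq^{3};q^{4})_{\infty}(-zq;q^{4})_{\infty}$, where the last step absorbs the factor $1+zq$ into the product $(-zq;q^{4})_{\infty}$. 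Dividing each $H_{\delta}(z)$ by $(zq;q)_{\infty}$ and matching $\delta$ to the corresponding symmetric profile gives exactly the three stated identities.

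I expect the main obstacle to be the careful derivation of the recurrence system: computing the cyclic pattern sets $\hat I_{\delta}$, correctly identifying the profiles output by $\hat\sigma_J$ when the flipped index is a boundary index ($j=0$ or $j=h$), and keeping the Pochhammer prefactors $(zq;q)_{\sum_{j\in J}a_j-1}$ and the argument shifts straight. This bookkeeping is mechanical but error-prone — which is precisely why the paper executes it with \texttt{qFunctions}. Once the system is correct, its collapse to a single $q$-difference equation and the solution of that equation are routine; the only mild subtleties are the product absorption $(1+zq)(-zq^{5};q^{4})_{\infty}=(-zq;q^{4})_{\infty}$ and the (easy) justification that the infinite iteration converges coefficientwise in $q$.
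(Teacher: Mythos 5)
Your proposal is correct and follows essentially the same route as the paper: identify width-4 symmetric cylindric partitions with weight-$(1,2,1)$ DSPPs, write down the coupled system from \eqref{E:adjustedDSPPrecurrencegeneral}, uncouple it to a single two-term $q$-difference equation, iterate to get the infinite product, and back-substitute. The only cosmetic differences are that the paper invokes the symmetry $\mathrm{SCP}_{(1,1,-1,-1)}=\mathrm{SCP}_{(-1,-1,1,1)}$ at the outset instead of carrying $H_{(-1,-1)}$ as a fourth unknown, and it iterates the uncoupled equation for $H_{(-1,1)}$ rather than $H_{(1,-1)}$.
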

\begin{proof}
We get the following coupled system of $q$-difference equations when we apply  \eqref{E:adjustedDSPPrecurrencegeneral} to the width 2 profiles with $a = (1,2,1)$:
\begin{align}
    \label{E:H11}  H_{(1,1)}(z) &= H_{(1,-1)}(z q),\\
    \label{E:H1-1} H_{(1,-1)}(z) &= (1- z q) H_{(-1,1)}(z q^2),\\
    \label{E:H-11} H_{(-1,1)}(z) &= 2 H_{(1,1)}(z q) - (1-z q) H_{(1,-1)}(z q^2), 
\end{align} where $H_\delta(z) := (z q;q)_\infty \SCP_{(-\mathrm{rev}(\delta),\delta)}(z)$. 

Recall that $\SCP_{(1,1,-1,-1)}(z;q)= \SCP_{(-1,-1,1,1)}(z;q)$, hence, we do not need to consider $H_{(-1,-1)}(z)$ as a separate entity. 

We can uncouple these $q$-difference equations for the $H_{(-1,1)}(z)$ function by substituting \eqref{E:H11} followed by substituting \eqref{E:H1-1} in \ref{E:H-11}. This yields \begin{align}
    \nonumber H_{(-1,1)}(z) &= 2(1-z q^3)H_{(-1,1)}(z q^4) - (1+zq)(1-zq^3)H_{(-1,1)}(zq^4)\\\label{E:uncoupH-11} &= (1+zq)(1-zq^3)H_{(-1,1)}(z q^4).
\end{align}
Iterating \eqref{E:uncoupH-11} shows that \begin{equation}\label{E:H-11prod}
    H_{(-1,1)}(z) = (-z q , z q^3;q^4)_\infty.
\end{equation} Substituting \eqref{E:H-11prod} in \eqref{E:H1-1} shows that $H_{(1,-1)}(z) = (-z q^3 , z q;q^4)_\infty$ and finally substituting this product in \eqref{E:H11} proves that $H_{(1,1)}(z) = (-z q^4 , z q^2;q^4)_\infty.$ Writing the definitions of $H_\delta(z)$s in their respective product representations proves Lemma~\ref{L:width2zqSCPproducts}.
\end{proof}

An alternate proof of Lemma~\ref{L:width2zqSCPproducts}, which will also lead to the proof of Theorem~\ref{T:scpwidth2}, can be given through uncoupling the recurrence system. Let \[H_\delta(z;q) := \sum_{n\geq 0} h_\delta(n) z^n.\] Then the $q$-difference equation system that the $H_\delta(z)$ functions satisfy is equivalent to the following coupled system of recurrences.
\begin{align*}
    h_{(1,1)}(n) &= q^n h_{(1,-1)}(n),\\
    h_{(1,-1)}(n) &= q^{2n} h_{(-1,1)}(n) - q^{2n-1} h_{(-1,1)}(n-1),\\
    h_{(-1,1)}(n) &= 2q^{n} h_{(1,1)}(n) -q^{2n} h_{(1,-1)}(n) + q^{2n-1}(n-1).
\end{align*} 
The initial conditions $h_\delta(0)=1$ and $h_\delta(m)=1$ for all negative integer $m$, defines these sequences uniquely. This system of recurrences can be uncoupled and we see that these coefficients satisfy the second order recurrences
\begin{align}
    \label{E:uncoup_h11_rec}(1 - q^{4n+8}) h_{(1,1)}(n+2) &= -q^{4n+6} (1 - q^2) h_{(1,1)}(n+1) - q^{4n+6}h_{(1,1)}(n),\\
    \label{E:uncoup_h1-1_rec}(1 - q^{4n+8}) h_{(1,-1)}(n+2) &= -q^{4n+5} (1 - q^2) h_{(1,-1)}(n+1) - q^{4n+4}h_{(1,-1)}(n),\\
    \label{E:uncoup_h-11_rec}(1 - q^{4n+8}) h_{(-1,1)}(n+2) &= -q^{4n+5} (1 - q^2) h_{(-1,1)}(n+1) + q^{4n+4}h_{(-1,1)}(n).
\end{align} 

We can switch from these uncoupled $q$-recurrence relations to the equivalent $q$-difference equations. This yields another proof of Lemma~\ref{L:width2zqSCPproducts} after confirming that the $q$-difference equations one get from \eqref{E:uncoup_h11_rec}-\eqref{E:uncoup_h-11_rec} are homogeneous two term relations (one of which is \eqref{E:uncoupH-11}).

However, one can also look at the recurrences \eqref{E:uncoup_h11_rec}-\eqref{E:uncoup_h-11_rec} and some initial conditions to guess (and later prove) a closed formula for these sequences. In our cases we have \begin{align}
    \label{E:h11_formula}h_{(1,1)}(n) = (-1)^{\lceil n/2\rceil} q^{n(n+1)} \frac{(q^2;q^4)_{\lceil n/2\rceil} (-q^4;q^4)_{\lfloor n/2\rfloor} }{(q^4;q^4)_n},\\
    \label{E:h1-1_formula}h_{(1,-1)}(n) = (-1)^{\lceil n/2\rceil} q^{n^2} \frac{(q^2;q^4)_{\lceil n/2\rceil} (-q^4;q^4)_{\lfloor n/2\rfloor} }{(q^4;q^4)_n},\intertext{and}
    \label{E:h-11_formula}h_{(-1,1)}(n) = (-1)^{\lfloor n/2\rfloor} q^{n^2} \frac{(q^2;q^4)_{\lceil n/2\rceil} (-q^4;q^4)_{\lfloor n/2\rfloor} }{(q^4;q^4)_n},
\end{align} where $\lceil\cdot \rceil$ and $\lfloor\cdot \rfloor$ are the classical ceil and floor functions. The right-hand side expressions satisfy the same initial conditions as the expressions on the left; they vanish for every negative integer $n$, and when $n=0$ they become 1. We can easily show that the expressions \eqref{E:h11_formula}-\eqref{E:h-11_formula} satisfy \eqref{E:uncoup_h11_rec}-\eqref{E:uncoup_h-11_rec}, respectively. For example, letting $n\mapsto 2n$, in \eqref{E:h-11_formula}, looking at the difference of the two sides of \eqref{E:uncoup_h11_rec} we see that everything vanishes:
\begin{align*}
    (1 -& q^{8n+8}) (-1)^{n+1} q^{(2n+2)(2n+3)} \frac{(q^2;q^4)_{n+1} (-q^4;q^4)_{n+1}}{(q^4;q^4)_{2n+2}}\\&+q^{8n+6} (1 - q^2) (-1)^{n+1} q^{(2n+1)(2n+2)} \frac{(q^2;q^4)_{n+1}(-q^4;q^4)_n}{(q^4;q^4)_{2n+1}}+q^{8n+6}(-1)^n q^{2n(2n+1)} \frac{(q^2, -q^4;q^4)_n}{(q^4;q^4)_{2n}} \\[-2ex]\\
    =& (-1)^{n+1}  q^{(2n+1)(2n+2)} \frac{(q^2;q^4)_{n+1}(-q^4;q^4)_n}{(q^4;q^4)_{2n+1}} (q^{4n+4}(1+q^{4n+4})+q^{8n+6}(1 - q^2))\\
    &+q^{8n+6}(-1)^n q^{2n(2n+1)} \frac{(q^2, -q^4;q^4)_n}{(q^4;q^4)_{2n}} \\[-2ex]\\
    =&q^{8n+6}(1+q^{4n+2}) (-1)^{n+1}  q^{2n(2n+1)} \frac{(q^2;q^4)_{n+1}(-q^4;q^4)_n}{(q^4;q^4)_{2n+1}}+q^{8n+6}(-1)^n q^{2n(2n+1)} \frac{(q^2, -q^4;q^4)_n}{(q^4;q^4)_{2n}}=0.
\end{align*} This proves that the right-side expression in \eqref{E:h11_formula} satisfies \eqref{E:uncoup_h11_rec}. By carrying the same calculations for $h_{(1,1)}(2n+1)$ and for the other two sequences we prove that \eqref{E:h11_formula}-\eqref{E:h-11_formula} satisfy \eqref{E:uncoup_h11_rec}-\eqref{E:uncoup_h-11_rec}, respectively. Lastly, we check the initial conditions, and that finishes the proof the equalities in \eqref{E:h11_formula}-\eqref{E:h-11_formula}.

Rewriting $H_{(-1,1)}(z)$ using the formula for $h_{(-1,1)}(n)$ in \eqref{E:H-11prod} yields \begin{equation}\label{E:H_sum_product}
    H_{(-1,1)}(z) = \sum_{n\geq 0} h_{(-1,1)}(n)z^n = \sum_{n\geq 0} (-1)^{\lfloor n/2\rfloor} q^{n^2} \frac{(q^2;q^4)_{\lceil n/2\rceil} (-q^4;q^4)_{\lfloor n/2\rfloor} }{(q^4;q^4)_n}z^n = (-zq,zq^3;q^4)_\infty.
\end{equation} Splitting the even and odd cases of the summand in \eqref{E:H_sum_product} and regrouping the terms proves Theorem~\ref{T:scpwidth2}. Moreover, the analogous calculations for $H_{(1,1)}(z)$ and $H_{(1,-1)}(z)$ also lead to Theorem~\ref{T:scpwidth2}.

It should be noted that one can give a shorter proof to Theorem~\ref{T:scpwidth2} without resorting to the coupled system of symmetric cylindric partitions of width 4. For example, one can use the $q$-binomial theorem to expand $(zq;q^4)_\infty$ and $(-zq^3;q^4)_\infty$ separately, multiply these series expansions, collect and compare the terms of $z^N$ of this expansion with the $z^N$ term of the left-side sum in Theorem~\ref{T:scpwidth2}. Then one can find the recurrences for these compared terms with $q$-Zeilberger algorithm and finish the proof. Another alternative to recurrences after the comparison is to observe that these the convolution sums one gets from the $q$-binomial theorems are equivalent to formulas (21) and (22) in \cite{Paule} (corresponding to even and odd exponents of $z$ in the expansions). 

Setting $z=q$ followed by $q^2\mapsto q$ in Theorem~\ref{T:scpwidth2} gives the following corollary.

\begin{corollary}\label{C:weightedPartitionSumGF}
\begin{equation}
    \label{E:scpwidth2_zq} \sum_{n\geq 0}(-1)^{n} q^{2n^2+n} \frac{(q;q^2)_{n+1}(-q^2;q^2)_{n}}{(q^2;q^2)_{2n+1}} = (q, -q^2;q^2)_\infty.
\end{equation}
\end{corollary}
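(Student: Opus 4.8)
\emph{Proof proposal.} The plan is to obtain Corollary \ref{C:weightedPartitionSumGF} as a direct specialization of Theorem \ref{T:scpwidth2}, exactly as its one-line justification suggests: first set $z=q$, then perform the base change $q^2\mapsto q$, keeping careful track of every factor on both sides.

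First I would substitute $z=q$ and simplify the parenthetical factor on the sum side. Since $1-\frac{q^{4n+1}z}{1+q^{4n+2}}\big|_{z=q}=1-\frac{q^{4n+2}}{1+q^{4n+2}}=\frac{1}{1+q^{4n+2}}$, the two-term bracket collapses into a single denominator, and using $q^{4n^2}z^{2n}\big|_{z=q}=q^{4n^2+2n}$ the left-hand side of Theorem \ref{T:scpwidth2} becomes
\[
\sum_{n\geq 0}(-1)^n q^{4n^2+2n}\,\frac{(q^2;q^4)_n(-q^4;q^4)_n}{(q^4;q^4)_{2n}\,(1+q^{4n+2})},
\]
while the right-hand side becomes $(q^2,-q^4;q^4)_\infty$.

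Next I would apply $q^2\mapsto q$, which sends $q^{4n^2+2n}\mapsto q^{2n^2+n}$, $(q^2;q^4)_n\mapsto(q;q^2)_n$, $(-q^4;q^4)_n\mapsto(-q^2;q^2)_n$, $(q^4;q^4)_{2n}\mapsto(q^2;q^2)_{2n}$, $1+q^{4n+2}\mapsto 1+q^{2n+1}$, and $(q^2,-q^4;q^4)_\infty\mapsto(q,-q^2;q^2)_\infty$, already the asserted product side. To match the summand with the one in the statement, I would invoke the elementary identities $(q;q^2)_{n+1}=(q;q^2)_n(1-q^{2n+1})$ and $(q^2;q^2)_{2n+1}=(q^2;q^2)_{2n}(1-q^{4n+2})=(q^2;q^2)_{2n}(1-q^{2n+1})(1+q^{2n+1})$, which give
\[
\frac{(q;q^2)_n}{(q^2;q^2)_{2n}(1+q^{2n+1})}=\frac{(q;q^2)_{n+1}}{(q^2;q^2)_{2n+1}},
\]
so the transformed sum is precisely $\sum_{n\geq 0}(-1)^n q^{2n^2+n}\frac{(q;q^2)_{n+1}(-q^2;q^2)_n}{(q^2;q^2)_{2n+1}}$, as required.

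Since this is a pure specialization, there is no genuine obstacle; the only care needed is the bookkeeping of the base change on exponents such as $q^{4n+2}=(q^2)^{2n+1}$ and on Pochhammer bases such as $(q^2;q^4)_n$, together with the small regrouping that absorbs the leftover factor $1+q^{2n+1}$ into the Pochhammer symbols. One should also note that $q^2\mapsto q$ preserves the condition $|q|<1$, so the analytic identity of Theorem \ref{T:scpwidth2} passes intact to the corollary.
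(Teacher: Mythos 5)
Your proposal is correct and is exactly the paper's derivation: the corollary is stated there as the result of setting $z=q$ and then applying $q^2\mapsto q$ in Theorem \ref{T:scpwidth2}, and your bookkeeping—collapsing the bracket to $\tfrac{1}{1+q^{4n+2}}$ and absorbing it via $(q;q^2)_{n+1}/(q^2;q^2)_{2n+1}$—fills in precisely the omitted routine steps. (The paper additionally sketches an independent combinatorial proof, but that is supplementary to this specialization argument.)
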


The identity \eqref{E:scpwidth2_zq} can also be proven through combinatorial arguments. The right-side product is the generating function for the number of partitions into distinct parts with the extra weight $-1$ to the number of odd parts in the partition. 

 One can interpret the left-hand side summand as the generating function of partitions into $2n$ or $2n+1$ distinct parts, where the partitions are counted with the extra weight $-1$ to the number of odd parts. This interpretation can be seen starting from the front factor $(-1)^{n} q^{2n^2+n}$. The term \[2n^2+n = 1+2+3+\dots+(2n-1)+2n\] is the generating function for partitions into exactly $2n$ consecutive parts. Furthermore, half of these $2n$ parts are odd and the $(-1)^n$ reflects the weight on the number of odd parts.  We interpret the $(-q^2;q^2)_n$ as the generating function of Young diagrams with columns even length $\leq 2n$ where each column length only appears once. Similarly $(q;q^2)_{n+1}$ is the generating function of Young diagrams with columns odd length $\leq 2n+1$ where each column length only appears once, where in this case we also have the extra weight $-1$ to the number of columns. Putting together these three terms, we can see that \[(-1)^{n} q^{2n^2+n}(q;q^2)_{n+1}(-q^2;q^2)_n\] is the generating function of partitions into $2n$ or $2n+1$ parts, where the smallest part is at most 2 if there are $2n$ parts, it is exactly 1 if there are $2n+1$ parts, and the difference between consecutive parts is at most 2. Moreover, each insertion of an odd length column in the original Young diagram of the partition with size $2n^2+n$ changes the number of odd parts by one, and each of these column also comes with a $-1$ weight itself that accounts for this change. Hence, combining the $-1$ weights we see that the exponent of $-1$ the number of odd parts in the outcome partition. Finally, the term $1/(q^2;q^2)_{2n+1}$ is the generating function of Young diagrams with even length $\leq 2n+1$ columns where each column appears even number of times. With this final observation, we can now interpret the summand \[(-1)^{n} q^{2n^2+n} \frac{(q;q^2)_{n+1}(-q^2;q^2)_{n}}{(q^2;q^2)_{2n+1}}\] as the generating function for partitions into $2n$ or $2n+1$ distinct parts with an extra alternating weight on the number of odd parts in the partitions. Summing over all possible $n$ we see that the left-hand side sum counts the same partitions as the right-hand side product.
 
 \subsection{Proof of Theorem~\ref{T:scpwidth3}:} Similar to the previous Subsection~\ref{SubS:SCP4}, we study the generating functions width 6 profiled symmetric cylindric partitions. Applying the normalized weighted DSPP recurrence \eqref{E:adjustedDSPPrecurrencegeneral} to the width 3 profiles with $a = (1,2,2,1)$ gives the coupled $q$-difference equation system  
\begin{align}
    \label{E:H111}  H_{\alpha}(z) &= H_{\beta}(z q),\\
    \label{E:H11-1} H_{\beta}(z) &= (1- z q) H_{\gamma}(z q^2),\\
    \label{E:H1-11} H_{\gamma}(z) &= H_{\beta}(z q) - (1-z q)(1-z q^2) H_{\gamma}(z q^3)+(1-zq)H_{\sigma}(zq^2),\\
    \label{E:H-111} H_\sigma(z) &= H_\alpha(zq) - (1-zq)H_\beta(zq^2) + H_{\gamma}(zq),
\end{align} where $$\alpha:=(1,1,1), \quad \beta:=(1,1,-1), \quad \gamma:=(1,-1,1), \quad \sigma:=(-1,1,1).$$ and \begin{equation}\label{E:H_def_SCP3}H_\delta(z) := (z q;q)_\infty \SCP_{(-\mathrm{rev}(\delta),\delta)}(z) := \sum_{n\geq0} h_{\delta}(n) z^n \end{equation} for a profile $\delta$. 

We can once again turn these $q$-difference equations to the equivalent $q$-recurrence equations. Once these recurrences are uncoupled, we get that these sequences satisfy the recurrences
\begin{align}
    \nonumber (1-q^{3n+9})(1+q^{3n+5}-&q^{3n+6})h_\alpha(n+3)= -q^{3n+7}(1-q^3-q^{3n+11}-q^{6n+16}+q^{6n+17}) h_\alpha(n+2)\\\label{E:H111_rec} &- q^{3n+9}(1+q^{3n+5}+q^{3n+7}+q^{3n+8}-q^{3n+9}+q^{6n+12}-q^{6n+14})h_\alpha(n+1)\\ \nonumber &+q^{6n+12}(1+q^{3n+8}-q^{3n+9})h_\alpha(n),\\[-2.2ex]\nonumber\\
    \nonumber (1-q^{3n+9})(1+q^{3n+5}-&q^{3n+6})h_\beta(n+3)= -q^{3n+6}(1-q^3-q^{3n+11}-q^{6n+16}+q^{6n+17}) h_\beta(n+2)\\\label{E:H11-1_rec} &- q^{3n+7}(1+q^{3n+5}+q^{3n+7}+q^{3n+8}-q^{3n+9}+q^{6n+12}-q^{6n+14})h_\beta(n+1)\\ \nonumber &+q^{6n+9}(1+q^{3n+8}-q^{3n+9})h_\beta(n),\\[-2.2ex]\nonumber\\
    \label{E:H1-11_rec}(1-q^{3n+9})h_\gamma(n+3)&= q^{6n+15}h_\gamma(n+2) - q^{3n+6}(1+q^{3n+4}+q^{3n+8})h_\gamma(n+1)+q^{6n+9}h_\gamma(n),\\
     \label{E:H-111_rec}(1-q^{3n+9})h_\sigma(n+3)&= q^{6n+13}h_\sigma(n+2) - q^{3n+8}(1+q^{3n+2}+q^{3n+4})h_\sigma(n+1)+q^{6n+9}h_\sigma(n).
\end{align} 
The initial conditions $h_\delta(0)=1$ and $h_\delta(m)=0$ for all negative $m$ defines these sequences uniquely. 

Looking at some initial values of the $h_\gamma(n)$ sequence, we can identify that \begin{align}
    \label{E:H_gamma_formula} h_\gamma(n) &=  \sum_{m\geq 0} (-1)^{m} q^{3{n+1\choose 2} -3m(m+1)} \frac{(-q,-q^5;q^6)_m}{(q^6;q^6)_m(q^3;q^3)_{n-2m}}.
\intertext{Using $q$-Zeilberger algorithm (or the creative telescoping algorithm), we can automatically prove that the right-hand side of \eqref{E:H_gamma_formula} satisfies \eqref{E:H1-11_rec}. In the same way, using the relations \eqref{E:H111}-\eqref{E:H-111}, we can prove}
     \label{E:H_beta_formula} h_\beta(n) &=  \sum_{m\geq 0} (-1)^{m+1} q^{3{n-1\choose 2}+2n -3m(m+1)-1} \frac{(-q,-q^5;q^6)_m}{(q^6;q^6)_m(q^3;q^3)_{n-2m}} (1-q^{3n+1}+q^{3n-6m}),\\
    \label{E:H_alpha_formula} h_\alpha(n) &=  \sum_{m\geq 0} (-1)^{m+1} q^{3{n+1\choose 2} -3m(m+1)-1} \frac{(-q,-q^5;q^6)_m}{(q^6;q^6)_m(q^3;q^3)_{n-2m}} (1-q^{3n+1}+q^{3n-6m}),
     \intertext{and}
    \label{E:H_sigma_formula} h_\sigma(n) &=  \sum_{m\geq 0} (-1)^{m+1} q^{3{n+1\choose 2}-2n-3m(m+1)-1} \frac{(-q,-q^5;q^6)_m}{(q^6;q^6)_m(q^3;q^3)_{n-2m}}\\  \nonumber &\hspace{2cm}\times(1-q^{3 n+1}-q^{3 n-2}-q^{3 n-6 m}-q^{3n-6 m-3}+q^{6n-6 m-2}+q^{6n-12 m-3})
\end{align} in this order. 

Notice that the formulas in \eqref{E:H_gamma_formula}-\eqref{E:H_sigma_formula} are the summands of the left-hand side sums of Theorem~\ref{T:scpwidth3}. Combining the definition of each  $H_\delta(z)$ \eqref{E:H_def_SCP3}, and Proposition~\ref{P:DSPPproductgeneral} (for $z=1$) for these profiles and weights finishes the proof of Theorem~\ref{T:scpwidth3}.

\subsection{New proofs of Little G\"{o}llnitz and G\"{o}llnitz--Gordon Theorems}
We use the system of recurrences for standard weight, width 3 DSPPs.  For standard weight, reversing the order of the integer partitions in a DSPP leads to the identity $\mathrm{DSPP}_{\delta}(z;q)=\mathrm{DSPP}_{-\mathrm{rev}(\delta)}(z;q)$, thus we need only write the system of recurrences using profiles 
$$\alpha:=(1,1,1), \quad \beta:=(1,1,-1), \quad \gamma:=(1,-1,1), \quad \sigma:=(-1,1,1).$$  From equation \ref{E:adjustedDSPPrecurrencegeneral}, we have the system
\begin{align*}
    H_{\alpha}(z)&=H_{\beta}(zq), \\
    H_{\beta}(z)&=H_{\gamma}(zq^2), \\
    H_{\gamma}(z)&=H_{\beta}(zq)+H_{\delta}(zq)-(1-zq)H_{\gamma}(zq^2),\\
    H_{\sigma}(z)&=H_{\alpha}(zq)+H_{\gamma}(zq)-(1-zq)H_{\beta}(zq^2),
\end{align*}
whence
$$
H_{\sigma}(z)=H_{\gamma}(zq)+zqH_{\gamma}(zq^3),
$$
and
$$
H_{\gamma}(z)=(1+zq)H_{\gamma}(zq^2)+zq^2H_{\gamma}(zq^4).
$$
If we now write $H_{\gamma}(z)=\sum_{n \geq 0} h_{\gamma}(n)z^n,$ where the $h_n$ are rational functions in $q$ and $h_{\gamma}(0)=H_{\gamma}(0)=1,$ then the above recurrence gives
$$
h_{\gamma}(n)=q^{2n}h_{\gamma}(n)+(q^{2n-1}+q^{4n-2})h_{\gamma}(n-1).
$$
By iterating, we obtain \[h_{\gamma}(n)=\frac{(-q;q^2)_n}{(q^2;q^2)_n}q^{n^2},\] therefore,
$$
\sum_{n \geq 0} \frac{(-q;q^2)_n}{(q^2;q^2)_n}q^{n^2}=H_{(1,-1,1)}(1)=(q;q)_{\infty}\mathrm{DSPP}_{(1,-1,1)}(q)=\frac{(q;q)_{\infty}}{(q,q^2,q^3,q^4;q^4)_{\infty}(q^2,q^4,q^7;q^8)_{\infty}}.
$$
Upon cancelling common terms in the product, we obtain the third identity in Theorem \ref{C:gollnitz}.  The other identities can be proved using the expression for $h_{\gamma}(n)$ and the relations above.

\section{Applications to Schmidt-type identities: proof of Theorem \ref{T:RefinedAndrewsPaule}}\label{S:Applications}

In this section we prove Theorem \ref{T:RefinedAndrewsPaule}, beginning with identity \eqref{E:refinedschmidtdiamond}.  It is clear from the definitions that diamond partitions can be thought of as weighted DSPPs.  In particular,
$$
\sum_{\lambda \in \diamondsuit}z^{\lambda_1}q^{\lambda_1+\lambda_4+\lambda_7 + \dots}=\mathrm{DSPP}_{(1,-1)}^{(0,1,0)}(z;q).
$$
Note that Proposition \ref{P:DSPPproductgeneral} then immediately implies \eqref{E:schmidtdiamond}, as
$$
\sum_{\lambda \in \diamondsuit}q^{\lambda_1+\lambda_4+\lambda_7 + \dots}=\frac{1}{(q;q)_{\infty}^3(q;q^2)_{\infty}}=\frac{(-q;q)_{\infty}}{(q;q)_{\infty}^3}.
$$
To prove our refinement, we apply the recurrences in Proposition \ref{P:DSPPrecurrencegeneral} to obtain the system,
\begin{align*}
&(1-z)\mathrm{DSPP}_{(1,1)}^{(0,1,0)}(z;q)=\mathrm{DSPP}_{(1,-1)}^{(0,1,0)}(z),\\
&(1-z)\mathrm{DSPP}_{(-1,-1)}^{(0,1,0)}(z;q)=\mathrm{DSPP}_{(1,-1)}^{(0,1,0)}(z),\\
    &(1-zq)\mathrm{DSPP}_{(1,-1)}^{(0,1,0)}(z;q)=\mathrm{DSPP}_{(-1,1)}^{(0,1,0)}(zq), \\
    &(1-z)\mathrm{DSPP}_{(-1,1)}^{(0,1,0)}(z;q)=\mathrm{DSPP}_{(1,1)}^{(0,1,0)}(z)+\mathrm{DSPP}_{(-1,-1)}^{(0,1,0)}(z)-\mathrm{DSPP}_{(1,-1)}^{(0,1,0)}(z).
\end{align*}
It is then easy to show that
$$
\mathrm{DSPP}_{(1,-1)}^{(0,1,0)}(z;q)=\frac{1+zq}{(1-zq)^3} \mathrm{DSPP}_{(1,-1)}^{(0,1,0)}(zq;q)=\frac{(-zq;q)_{\infty}}{(zq;q)_{\infty}^3},
$$
proving \eqref{E:refinedschmidtdiamond}.

Identities \eqref{E:refinedschmidtunrestricted1} and \eqref{E:refinedschmidtunrestricted2} are proved similarly by showing that
$$
\sum_{\lambda \in \mathcal{P}}z^{\lambda_1}q^{\lambda_1+\lambda_3+\dots}=\mathrm{CP}_{(-1,1)}^{(0,1)}(z;q), \quad\text{and}\quad \sum_{\lambda \in \mathcal{P}}z^{\lambda_1}q^{\lambda_2+\lambda_4+\dots}=\mathrm{CP}_{(1,-1)}^{(0,1)}(z;q),
$$
with the aid of the recurrence in Proposition \ref{P:CPrecurrencegeneral}. 

For identities \eqref{E:refinedschmidtdistinct1} and \eqref{E:refinedschmidtdistinct2}, we introduce {\it cylindric partitions into distinct parts}, which are defined so that the inequalities along rows and columns in a cylindric partition are strict.  Define the generating function
$
\mathrm{DCP}_{\delta}^{{\bf a}}(z;q)
$ analogously.  Then
$$
\sum_{\lambda \in \mathcal{D}}z^{\lambda_1}q^{\lambda_1+\lambda_3+\dots}=\mathrm{DCP}_{(-1,1)}^{(0,1)}(z;q), \qquad \sum_{\lambda \in \mathcal{D}}z^{\lambda_1}q^{\lambda_2+\lambda_4+\dots}=\mathrm{DCP}_{(1,-1)}^{(0,1)}(z;q).
$$
In this case, the recurrence is slightly different from Proposition \ref{P:CPrecurrencegeneral} but is proved as in \cite{CW}.
\begin{proposition}
With notation as in Proposition \ref{P:CPrecurrencegeneral}, we have
$$
\mathrm{DCP}_{\delta}^{{\bf a}}(z;q)=1+\sum_{\emptyset \subsetneq J \subseteq I_{\delta}} (-1)^{|J|-1}zq^{\sum_{j \in J} a_j}\frac{\mathrm{DCP}^{{\bf a}}_{\sigma_J(\delta)}\left(zq^{\sum_{j \in J}a_j};q\right)}{1-zq^{\sum_{j \in J} a_j}}.
$$
\end{proposition}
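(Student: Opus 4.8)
The plan is to adapt, to the setting of strict inequalities, the inclusion--exclusion argument that proves Proposition~\ref{P:CPrecurrencegeneral} (the reformulation of \cite[Proposition~3.1]{CW}). First I would recall the combinatorial mechanism used there: given a cylindric partition $\lambda\in\mathcal{CP}_\delta$ with $\mathrm{max}(\lambda)=N\geq 1$, the value $N$ occurs at the largest-part cell $\lambda^j_1$ exactly for $j$ in a nonempty set of ``active corners'', which one checks is contained in $I_\delta$; for each nonempty subset $J$ of active corners, deleting the entire largest part $\lambda^j_1$ (which has size $N$) for all $j\in J$ produces a cylindric partition of profile $\sigma_J(\delta)$, and the alternating sum over nonempty $J$ cancels the overcounting caused by the fact that $J$ may be any nonempty subset of active corners. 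For cylindric partitions \emph{into distinct parts} the cell $\lambda^j_1$ is the unique entry of $\lambda^j$ equal to $N$, and I would check---exactly as in \cite{CW}, with every ``$\geq$'' replaced by ``$>$''---that this same deletion yields a valid \emph{distinct} cylindric partition of profile $\sigma_J(\delta)$, and that the modifications at different active corners are independent (this uses that $I_\delta$ contains no two cyclically consecutive indices).

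Next I would track the weighted statistics. Deleting the largest part at corner $j$ decreases $|\lambda|_{\bf a}$ by $a_jN$, hence by $\bigl(\sum_{j\in J}a_j\bigr)N$ in total; writing $\mu$ for the resulting partition in $\mathcal{DCP}_{\sigma_J(\delta)}$, reconstructing $\lambda$ amounts to prepending, at each corner in $J$, a new largest part of common value $\mathrm{max}(\lambda)=\mathrm{max}(\mu)+m$. The key new feature is that \emph{distinctness forces} $m\geq 1$, whereas in Proposition~\ref{P:CPrecurrencegeneral} one only gets $m\geq 0$ (there the deleted part may be repeated, so the max can be unchanged). Summing $\sum_{m\geq 1}\bigl(zq^{\sum_{j\in J}a_j}\bigr)^m$ against $\mathrm{DCP}^{\bf a}_{\sigma_J(\delta)}\bigl(zq^{\sum_{j\in J}a_j};q\bigr)$ then reproduces the $J$-summand of the claimed identity, the extra factor $zq^{\sum_{j\in J}a_j}$ being precisely the $m=1$ contribution. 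Finally, the isolated term $1$ accounts for the empty cylindric partition: in Proposition~\ref{P:CPrecurrencegeneral} this is recovered from the constant terms of the summands via $\sum_{\emptyset\subsetneq J\subseteq I_\delta}(-1)^{|J|-1}=1$, but here every summand carries a factor of $z$ and so contributes nothing to the constant term. The ``toy'' shadow of this recurrence is the distinct-partition analogue $D(z;q)=1+zq\,D(zq;q)/(1-zq)$ of \eqref{E:Partitionrecurrence}, which I would use both as a sanity check and to motivate the shape of the answer.

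The main obstacle is the bookkeeping at the active corners: carefully verifying that deletion and reconstruction preserve the strict inequalities defining $\mathcal{DCP}_\delta$, transform the profile by exactly $\sigma_J$, and behave independently at distinct corners. Granting the corresponding statements for ordinary cylindric partitions from \cite{CW}, these become routine sign-by-sign checks, and I would present them compactly rather than in full, just as with Propositions~\ref{P:CPrecurrencegeneral} and \ref{P:DSPPrecurrencegeneral}.
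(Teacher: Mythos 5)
Your proposal contains a genuine inconsistency, and it is worth flagging because the paper itself gives no details here (it only says the recurrence ``is proved as in \cite{CW}''). You invoke two mechanisms that cannot both operate. On the one hand you keep the Corteel--Welsh inclusion--exclusion: every nonempty subset $J$ of the corners at which $\max(\lambda)$ occurs is supposed to reproduce $\lambda$, with the signs $(-1)^{|J|-1}$ cancelling the overcount. On the other hand you claim that distinctness forces $m\geq 1$, i.e.\ that the $J$-term only generates pairs $(\mu,N)$ with $N=\max(\mu)+m>\max(\mu)$. These clash: if $J$ is a \emph{proper} subset of the corners where the maximum occurs, deleting the largest part at the corners of $J$ leaves the maximum unchanged, so $m=0$ and that $J$ does not reproduce $\lambda$ at all. (Your justification of $m\geq1$ is also not quite right: strictness only forces the inserted part to exceed the first entries of the two neighbouring diagonals, not $\max(\mu)$; what is true is that in a cylindric partition into distinct parts the maximum can occur only as a first entry of a corner diagonal $j\in I_\delta$, so deleting it at \emph{all} such corners strictly lowers the maximum.) Hence, with the factor $zq^{\sum_{j\in J}a_j}/(1-zq^{\sum_{j\in J}a_j})$, each nonempty $\lambda$ is reconstructed from exactly one $J$, namely the full set of corners carrying its maximum; there is no overcounting to cancel, and the sign $(-1)^{|J|-1}$ then miscounts every $\lambda$ whose maximum occurs at an even number of corners.

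Concretely, take $\delta=(1,-1,1,-1)$ with standard weight, so $I_\delta=\{1,3\}$. The object with $\lambda^1=\lambda^3=(1)$ and $\lambda^0=\lambda^2=\emptyset$ is a valid cylindric partition into distinct parts (the two cells lie on non-adjacent diagonals, so no row or column relates them) and contributes $+zq^{2}$ to $\mathrm{DCP}_{\delta}(z;q)$; on the right-hand side the only contribution to the coefficient of $zq^{2}$ comes from $J=\{1,3\}$ and carries the sign $(-1)^{|J|-1}=-1$. So the deletion/reconstruction argument you describe, carried out correctly, yields the sign-free recurrence
$$
\mathrm{DCP}^{{\bf a}}_{\delta}(z;q)=1+\sum_{\emptyset \subsetneq J \subseteq I_{\delta}} zq^{\sum_{j\in J}a_j}\,\frac{\mathrm{DCP}^{{\bf a}}_{\sigma_J(\delta)}\bigl(zq^{\sum_{j\in J}a_j};q\bigr)}{1-zq^{\sum_{j\in J}a_j}},
$$
not the signed statement you set out to prove; the two coincide precisely when $|I_\delta|=1$, which covers the width-2, weight-$(0,1)$ profiles actually used in Section \ref{S:Applications}, so \eqref{E:refinedschmidtdistinct1} and \eqref{E:refinedschmidtdistinct2} are unaffected. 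To salvage a signed version in general you would have to replace the $J$-term by the generating function of pairs in which $N$ merely exceeds the neighbouring first entries (the set counted with multiplicity $\binom{|S_\lambda|}{|J|}$-style overcounting, as in the weak case), and that is no longer the displayed product; as written, your proof does not establish the stated proposition.
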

Note that the $q$-difference equations above are inhomogeneous, in contrast to those for cylindric partitions and DSPPs.

Thus, we have the system
\begin{align*}
    \mathrm{DCP}_{(-1,1)}^{(0,1)}(z;q)&=1+\frac{z}{1-z}\mathrm{DCP}_{(1,-1)}^{(0,1)}(z;q) \\
    \mathrm{DCP}_{(1,-1)}^{(0,1)}(z;q)&=1+\frac{zq}{1-zq}\mathrm{DCP}_{(-1,1)}^{(0,1)}(zq;q),
\end{align*}
which implies
$$
\mathrm{DCP}_{(1,-1)}^{(0,1)}(z;q)=1+\frac{zq}{1-zq}+\frac{z^2q^2}{(1-zq)^2}\mathrm{DCP}_{(1,-1)}^{(0,1)}(zq;q)=\frac{1}{1-zq}+\frac{z^2q^2}{(1-zq)^2}\mathrm{DCP}_{(1,-1)}^{(0,1)}(zq;q),
$$
which one may iterate to get
\begin{equation}\label{E:DCPlastLine}
\mathrm{DCP}_{(1,-1)}^{(0,1)}(z;q)=\sum_{n \geq 0} \frac{z^{2n}q^{n(n+1)}}{(zq;q)_{n}(zq;q)_{n+1}},    
\end{equation}
which gives \eqref{E:refinedschmidtdistinct1}.  Identity \eqref{E:refinedschmidtdistinct2} can also be derived in a similar fashion by studying $\mathrm{DCP}_{(-1,1)}^{(0,1)}(z;q)$.

\section{Conclusion and Outlook}\label{S:Outlook}

Corteel and Welsh \cite{CW} developed the idea of using coupled systems of $q$-difference equations for cylindric partitions to prove sum-product identities. This paper expands their original idea in multiple ways. We apply Corteel and Welsh's argument to symmetric cylindric partitions and skew double shifted plane partitions. The introduction of non-trivial weights then allows for a bigger search space. We do not have a clear vision of which profile families (those having the same width and same rank) or which group of weights would yield an essentially solvable $q$-difference equation system.

New combinatorial questions arise from the correspondences noted in Remark \ref{R:differentproducts}. Can we define equivalence classes of cylindric partitions (such as for the ones in \eqref{E:CPs1} and \eqref{E:CPs2}) or DSPPs? Are there direct bijections between these sets of objects?  Can finite versions of these sum-product identities be proved?

 In the alternate proof of Corollary~\ref{C:weightedPartitionSumGF}, we showed that when $z=q$ both sides of the identity in the Theorem \ref{T:scpwidth2} can be interpreted as a generating function for a weighted count of partitions into distinct parts. It would be interesting to see partition theoretic interpretations of Theorem~\ref{T:scpwidth2} for other specializations of $z$.

We should also note that Lemma~\ref{L:width2zqSCPproducts} and our other experiments suggest that the generating functions in many cases actually yield bivariate products in $q$ and $z$ rather than the univariate products coming from Propositions~\ref{P:borodin}, \ref{P:CPgeneralproduct} and \ref{P:DSPPproductgeneral}. Similarly, in Section \ref{S:Applications} we discussed cylindric partitions into distinct parts and observed through Theorem~\ref{T:AndrewsPaule} that for some width 2, rank 1 profiles the univariate  generating functions (when $z=1$) turn into infinite products. It is worth investigating whether we get bivariate products in other cases as well.

The authors plan to return to some of these questions in follow-up work, guided with the help of the new symbolic computation implementations tailor-made for this purpose.

\section*{Appendix A: modularity of $\mathrm{CP}_{\delta}(q)$}

We prove that the multiset $W_3(\delta)$ in Proposition \ref{P:borodin} contains every $k<h$ as many times as it contains $h-k.$  We say that $W_3(\delta)$ is {\it $k$-balanced} if the above property is true for $k$.

We begin by showing that $W_3(\delta)$ is 1-balanced.  It is easy to see that $W_3(-\delta)$ contains $h-k$ exactly as many times as $W_3(\delta)$ contains $k$, so $W_3(-\delta)$ is $k$-balanced if and only if $W_3(\delta)$ is.  Hence, we may assume without loss of generality that $\delta_1=1.$  Thus, the multiplicity of 1 in $W_3(\delta)$ is
$$
\#\{(\delta_j,\delta_{j+1})=(1,-1)\},
$$
and the multiplicity of $h-1$ in $W_3(\delta)$ is
$$
\#\{(\delta_j,\delta_{j+1})=(-1,1)\}+1_{\delta_h=-1}.
$$
If the number of sign changes in the sequence $\delta$ is even, then $\delta_h=1$ and the two multiplicities above are equal.  If the number of sign changes is odd, then $\delta_h=-1$ and furthermore $$
\#\{(\delta_j,\delta_{j+1})=(1,-1)\}
=1+
\#\{(\delta_j,\delta_{j+1})=(-1,1)\},
$$
as required.  Thus, $W_3(\delta)$ is 1-balanced for all profiles $\delta$.

Now let $1<k<h-1$ and partition $\delta$ into the following sub-profiles,
\begin{align*}
    &(\delta_1, \delta_{1+k}, \dots, \delta_{1+q_1k}) \\
    &(\delta_2, \delta_{2+k}, \dots, \delta_{2+q_2k}) \\
    &\hspace{10mm} \vdots \\
    &(\delta_{k-1}, \delta_{2k-1}, \dots, \delta_{(q_{k-1}+1)k-1}),
\end{align*}
where each $q_r$ satisfies $q_rk+r\leq h<(q_r+1)k+r.$  The multiplicity of $k$ in $W_3(\delta)$ is
$$
\#\{(\delta_j,\delta_{j+k})=(1,-1)\} + \#\{(\delta_j,\delta_{j+h-k})=(-1,1)\},
$$
and the multiplicity of $h-k$ in $W_3(\delta)$ is
$$
\#\{(\delta_j,\delta_{j+k})=(-1,1)\} + \#\{(\delta_j,\delta_{j+h-k})=(1,-1)\}.
$$
Here, the $k$ pairs $\{(\delta_j,\delta_{j+h-k})\}_{1\leq j \leq k}$ occurring in $\delta$ are evenly distributed amongst the $k$ sub-profiles.  That $W_3(\delta)$ is $k$-balanced now follows from the fact that each sub-profile is $1$-balanced.

\section*{Appendix B}

The second author is updating and adding new functionality to the \texttt{qFunctions} package as a part of the Austrian Science Fund (FWF) sponsored project P-34501N ``Partition identities using the weighted word approach" he is running. The most up to date released version of this package as well as the up to date information can be found on the author's website \url{https://akuncu.com/qfunctions/} and on the RISC website  \url{https://risc.jku.at/software/}.

One big difference between earlier work and this paper appears in the description of the profiles for cylindric partitions. This paper follows the profile definitions of Han and Xiong \cite{HX}, however, in \cite{CW, CDU, W} the profiles for cylindric partitions are given in an equivalent but different representation.  One can call \texttt{SwitchCPProfileRepresentation} and provide a profile either in the form used in \cite{CW} or in \cite{HX} to get its equivalent representation. This transformation is simple and, when needed, it is now automatically detected and done by the \texttt{qFunctions} package.

The original $q$-difference relations \eqref{E:CWrecurrence} Corteel and Welsh noted for cylindric partitions \cite{CW} is implemented in \texttt{qFunctions}. The functionality to find all the $q$-difference relations in a coupled system through \eqref{E:CWrecurrencegeneral}-\eqref{E:DSPPrecurrencegeneral} is also added to the package and it is tested meticulously during the development of this work. These functions are called \texttt{WeightedCPFunctionalEquationSystem}, \texttt{WeightedDSPPFunctionalEquationSystem}, and \texttt{WeightedDCPFunctionalEquationSystem} similar to the original \texttt{CylindricFunctionalEquationCreator} with one additional input of the weights at the end. These functions and other related functionalities will be added to the next release of the package. It should be noted that these functionality is not necessary for the proofs of this paper. The implementation is being done solely for assisting future users with problems involving larger coupled systems of equations.

On top of the automated $q$-difference equation system setting functions. The second author also implemented experimental tools where one can choose a product as in the right-hand side of \eqref{E:Borodingeneral} or \eqref{E:DSPPproductgeneral} and try any profile against this product to find weights $a_i$'s which would give the product as the generating function of the selected object. These are done by setting up the linear systems induced by the $W^a_i$ sets presented in the Propositions \ref{P:CPgeneralproduct} and \ref{P:DSPPproductgeneral} against the selected residue classes and modulus. These functionalities are called \texttt{FitCPWeights} and \texttt{FitDSPPWeights}.


\begin{thebibliography}{99}

\bibitem{qFuncs} J. Ablinger and A.K. Uncu, \texttt{qFunctions }\textit{- A Mathematica package for $q$-series and partition theory applications}, J. Symbolic Comput. 107, (2021), 145-166.

\bibitem{A} G.E, Andrews, {\it The Theory of Partitions}, Cambridge University Press (1984).

\bibitem{AB} G.E. Andrews and R.J. Baxter, {\it A motivated proof of the Rogers--Ramanujan identities}, Am. Math. Mon. {\bf 96} (1989),  401--409.

\bibitem{ACW} G.E. Andrews, A. Schilling, S.O. Warnaar, {\it An $A_2$ Bailey lemma and Rogers-Ramanujan-type identities}, J. Amer. Math. Soc., \textbf{12} (3), (1999) 677--702.

\bibitem{AP} G.E. Andrews and P. Paule, {\it MacMahon's partition analysis XIII: Schmidt type partitions and modular forms,} J. Num. Theo. (2021). \url{https://doi.org/10.1016/j.jnt.2021.09.008}

\bibitem{B} A. Borodin, {\it Periodic Schur processes and cylindric partitions}, Duke Math. J. {\bf 140} (2007), 391--468.

\bibitem{CDU} S. Corteel, J. Dousse and A.K. Uncu, {\it Cylindric partitions and some new $A_2$ Rogers--Ramanujan identies,} Proc. Amer. Math. Soc. \textbf{150} (3), (February 2022), 481--497.

\bibitem{CW}  S. Corteel and T. Welsh, {\it The $A_2$ Rogers--Ramanujan identities revisited}, Annals of Comb. {\bf 23} (2019), 683-–694.

\bibitem{FW} B. Feigin, O. Foda, and T. Welsh, {\it Andrews-Gordon type identities from combinations of Virasoro characters}, Ramanujan J. {\bf 17} (2008), no. 1, 33--52.

\bibitem{GM} A.M. Garsia and S.C. Milne, {\it A Rogers--Ramanujan bijection,} J. Comb. Theo. Ser. A {\bf 31} (1981), 289--339.

\bibitem{GK} I.M. Gessel and C. Krattenthaler, {\it Cylindric partitions,} Trans. Amer. Math. Soc. {\bf 349} (1997), 429--479.

\bibitem{G} H. G\"{o}llnitz, {\it Partitionen mit Differenzenbedingungen,} Journal f\"{u}r die reine und angewandte Mathematik {\bf 225} (1967), 154--190.

\bibitem{HX} G.-N. Han and H. Xiong, {\it Skew doubled shifted plane partitions: calculus and asymptotics}, Amer. Inst. Math. Sci. {\bf 29} (2021),  1841--1857.

\bibitem{HoloFuncs} C. Koutschan, \textit{Advanced Applications of the Holonomic Systems Approach}, RISC, Johannes Kepler University, Linz. PhD Thesis. September 2009.

\bibitem{LW}  J. Lepowsky and R.L. Wilson, {\it The Rogers--Ramanujan identities: Lie theoretic interpretation and proof,} Proc. Nat. Acad. Sci. {\bf 78} (1981), 699--701.

\bibitem{Paule} P. Paule, On identities of the Rogers–Ramanujan type, J. Math. Anal. Appl. \textbf{107} (1985), 255--284.

\bibitem{S}  F. Schmidt, {\it Interrupted partitions,} Problem 10629, Am. Math. Mon. {\bf 104} (1999), 87--88.

\bibitem{Sills} A. Sills, {\it An Invitation to the Rogers--Ramanujan Identities}, CRC Press (2017).

\bibitem{U} A.K. Uncu, {\it Weighted Rogers--Ramanujan partitions and Dyson crank}, Ramanujan J. {\bf 46} (2018), 579--591.

\bibitem{W} O. Warnaar, {\it The $A_2$ Andrews--Gordon identities and cylindric partitions,} \texttt{https://arxiv.org/abs/2111.07550}

\end{thebibliography}
\end{document}